\RequirePackage[clockwise]{rotating}
\documentclass[a4paper, reqno]{amsart}
\DeclareRobustCommand{\SkipTocEntry}[5]{}
\usepackage{amsmath,amsthm,amssymb,relsize}
\usepackage[nobysame,alphabetic,initials]{amsrefs}
\usepackage[margin=30mm]{geometry}
\usepackage{mathrsfs,mathtools,makecell}

\usepackage{booktabs, multirow, adjustbox, array}
\newcommand{\bighash}[1]{\,\,{\scalebox{1.2}{\#}^{#1}}}
\newcolumntype{R}[2]{%
    >{\adjustbox{angle=#1,lap=\width-(#2)}\bgroup}%
    l%
    <{\egroup}%
}
\newcommand*\tilt{\multicolumn{1}{R{30}{1em}}}
\newcommand{\ra}[1]{\renewcommand{\arraystretch}{#1}}
\usepackage{pifont}
\newcommand{\cmark}{\ding{51}}%
\newcommand{\xmark}{\ding{55}}%
\usepackage{floatrow}
\DeclareFloatFont{tiny}{\small}
\floatsetup[table]{font=tiny}
\usepackage{enumerate}

\usepackage{tikz}
\usetikzlibrary{
  cd,
  calc,
  positioning,
  fit,
  arrows,
  decorations.pathreplacing,
  decorations.markings,
  shapes.geometric,
  backgrounds,
  bending
}
\usepackage{tikzsymbols}
\usepackage{xcolor}

\definecolor{darkblue}{rgb}{0,0,0.6}

\newcommand{\red}{\textcolor{red}}

\usepackage[breaklinks, pdftex, ocgcolorlinks,colorlinks=true, citecolor=darkblue, filecolor=darkblue, linkcolor=darkblue, urlcolor=darkblue]{hyperref}
\usepackage[capitalize,noabbrev]{cleveref}

\makeatletter
\newtheorem*{rep@theorem}{\rep@title}
\newcommand{\newreptheorem}[2]{%
\newenvironment{rep#1}[1]{%
 \def\rep@title{#2 \ref{##1}}%
 \begin{rep@theorem}}%
 {\end{rep@theorem}}}
\makeatother

\newtheorem{proposition}{Proposition}[section]
\newtheorem{theorem}[proposition]{Theorem}

\newtheorem{lemma}[proposition]{Lemma}

\theoremstyle{definition}
\newtheorem{definition}[proposition]{Definition}
\newtheorem{question}[proposition]{Question}
\newtheorem{example}[proposition]{Example}

\theoremstyle{remark}
\newtheorem{remark}[proposition]{Remark}

\newtheorem*{remark*}{Remark}

\newreptheorem{theorem}{Theorem}
\newreptheorem{lemma}{Lemma}
\newreptheorem{proposition}{Proposition}
\newreptheorem{corollary}{Corollary}
\newreptheorem{question}{Question}
\numberwithin{equation}{section}

\newcommand{\Diff}{\text{Diff}}

\newcommand{\sign}{\operatorname{sign}}

\newcommand{\RR}{\mathbb{R}}
\newcommand{\R}{\mathbb{R}}
\newcommand{\Z}{\mathbb{Z}}

\newcommand{\cN}{\mathcal{N}}

\newcommand{\im}{\operatorname{Im}}
\newcommand{\Id}{\operatorname{Id}}

\newcommand{\Arf}{\operatorname{Arf}}

\newcommand{\pt}{*}

\newcommand{\ol}{\overline}
\newcommand{\wt}{\widetilde}
\newcommand{\wh}{\widehat}
\newcommand{\sm}{\setminus}

\newcommand{\ks}{\operatorname{ks}}

\newcommand{\CP}{\mathbb{CP}}
\newcommand{\RP}{\mathbb{RP}}
\renewcommand{\star}{\mathop{}\!*}
\DeclareMathOperator{\capp}{cap}

\DeclareMathOperator{\Aut}{Aut}
\DeclareMathOperator{\hAut}{hAut}
\DeclareMathOperator{\Spin}{Spin}
\DeclareMathOperator{\BSpin}{BSpin}
\DeclareMathOperator{\TOPSpin}{TOPSpin}
\DeclareMathOperator{\Wh}{Wh}
\DeclareMathOperator{\Int}{Int}
\DeclareMathOperator{\GL}{GL}
\DeclareMathOperator{\E}{E}
\DeclareMathOperator{\K}{K}
\DeclareMathOperator{\BTOP}{BTOP}
\newcommand{\TOP}{\mathrm{TOP}}
\newcommand{\PL}{\mathrm{PL}}
\newcommand{\OO}{\mathrm{O}}
\newcommand{\G}{\mathrm{G}}

\DeclareMathOperator{\pr}{pr}

\newcommand{\bsm}{\left(\begin{smallmatrix}}
\newcommand{\esm}{\end{smallmatrix}\right)}

\usepackage{letltxmacro}

\LetLtxMacro\Oldfootnote\footnote

\begin{document}
\title{Counterexamples in $4$-manifold topology}

\author{Daniel Kasprowski}
\address{Rheinische Friedrich-Wilhelms-Universit\"at Bonn, Mathematisches Institut,\newline\indent Endenicher Allee 60, 53115 Bonn, Germany}
\email{kasprowski@uni-bonn.de}

\author{Mark Powell}
\address{School of  Mathematics and Statistics, University of Glasgow, United Kingdom}
\email{mark.powell@glasgow.ac.uk}

\author{Arunima Ray}
\address{Max Planck Institut f\"{u}r Mathematik, Vivatsgasse 7, 53111 Bonn, Germany}
\email{aruray@mpim-bonn.mpg.de }

\def\subjclassname{\textup{2020} Mathematics Subject Classification}
\expandafter\let\csname subjclassname@1991\endcsname=\subjclassname
\subjclass{
57K40. 
}
\keywords{4-manifolds, equivalence relations}

\begin{abstract}
We illustrate the rich landscape of 4-manifold topology through the lens of counterexamples.
We consider several of the most commonly studied equivalence relations on $4$-manifolds and how they are related to one another.  We explain implications e.g.\ that $h$-cobordant manifolds are stably homeomorphic, and we provide examples illustrating the failure of other potential implications.  The information is conveniently organised in a flowchart and a table.
\end{abstract}
\maketitle
\addtocontents{toc}{\SkipTocEntry}
\section{Introduction}

The goal of this paper is to organise various equivalence relations in $4$-manifold topology, and to understand the connections between them.
We consider closed, connected $4$-manifolds, unless otherwise specified, and we work in both the smooth and topological settings. Much work on 4-manifolds focusses on exotic behaviour, e.g.\ $4$-manifolds that are homeomorphic but not diffeomorphic.   We aim to illustrate, more broadly, the wealth of $4$-manifold topology that has been discovered. The flowchart in \cref{fig:flowchart} shows the relationships between the equivalence relations we study.
We will recall their definitions in \cref{section:definitions-of-relations}, and prove the nontrivial implications in \cref{section:just-of-implications}.

\begin{figure}[htb]
\includegraphics[width=15cm]{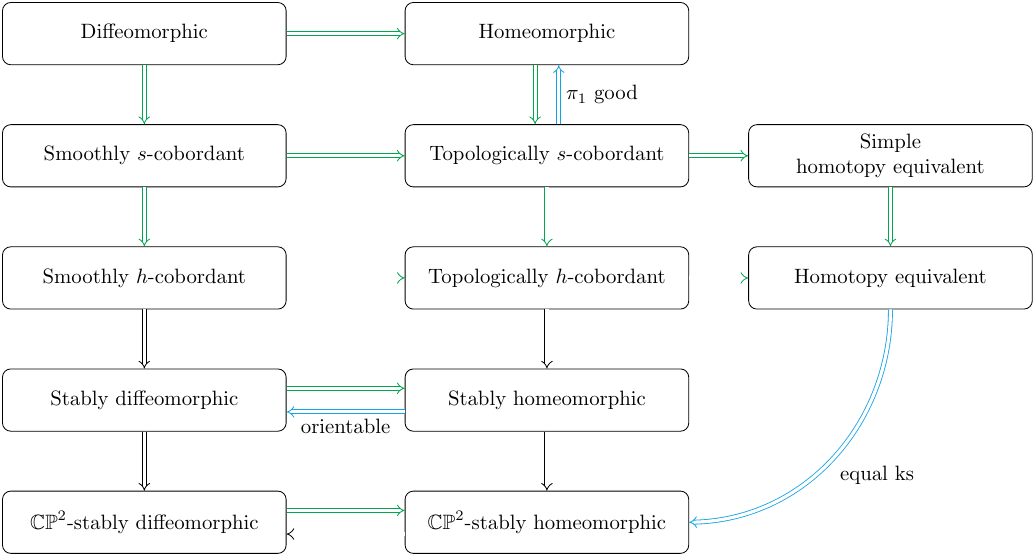}
\caption{Equivalence relations on $4$-manifolds. The implications shown in green are immediate. The black and blue implications are proven in~\cref{section:just-of-implications}. The blue implications hold when the corresponding condition is true, e.g.\ homotopy equivalent $4$-manifolds are $\CP^2$-stably homeomorphic if their Kirby-Siebenmann invariants coincide.
 Where necessary for an implication to make sense, we assume that the manifolds are smooth. For example, the bottom arrow means that closed, \emph{smooth} $\CP^2$-stably homeomorphic $4$-manifolds are $\CP^2$-stably diffeomorphic, since the latter notion is only defined for smooth $4$-manifolds.
}
\label{fig:flowchart}
\end{figure}

We collect counterexamples to the converses of the majority of the implications shown.
Most of the results we discuss are known in the literature, although there are some original observations and results.


The behaviour and study of $4$-manifolds is qualitatively different from that in other dimensions. In lower dimensions, much can be said using tools from geometry, perhaps best exemplified by the geometrisation theorem~\cites{Thurston-Geom-conj,Perelman:2002-1,Perelman:2003-2,Perelman:2003-1}.  Higher-dimensional manifolds are studied via homotopy theoretic and algebraic methods, thanks to the Whitney trick and the powerful tools of surgery theory~\cites{Browder,Novikov,Sullivan,Wall-surgery-book,Kirby-Siebenmann:1977-1} and the $s$-cobordism theorem~\cites{Smale:1962-1,Barden,Mazur:1963-1,Stallings-Tata,Kirby-Siebenmann:1977-1}. In dimension four, the Whitney trick does not directly apply, and surgery and the $s$-cobordism theorem are only available under special circumstances.



The first major progress on the classification of 4-manifolds was due to Whitehead and Milnor~\cites{Whitehead-4-complexes,Milnor-simply-connected-4-manifolds}, who classified simply connected 4-manifolds up to homotopy equivalence.
The homotopy classification has since been completed for more fundamental groups, and remains a topic of active research \cites{Hambleton-Kreck:1988-1,KKR92,teichnerthesis,Baues-Bleile,KNR,KPR-homotopy}.
A diffeomorphism classification was, and still remains, out of reach. Prior to the 1980s, progress on purely topological $4$-manifolds was impossible, in the absence of fundamental tools like topological transversality. Instead, Wall~\cites{Wall-diffeos-4-manifolds,Wall-on-simply-conn-4mflds} and Cappell-Shaneson~\cites{CS-new-four-mflds-bams,CS-new-four-mflds-annals,CS-on-4-d-surgery} studied 4-manifolds up to stable diffeomorphism, namely up to connected sum with copies of $S^2 \times S^2$. In particular, Wall gave the stable classification of simply connected 4-manifolds. As with the homotopy classification, using Kreck's ideas from  \cite{surgeryandduality} the stable classification has since been completed for more fundamental groups, and remains a topic of active research \cites{teichnerthesis,HK93b,CAR-95,Spaggiari-03,HKT,KLPT,KPT,KPT21,Debray}.
The following examples compare the equivalence relations of homotopy equivalence and stable diffeomorphism.

\begin{example}\label{intro-example-CS-K-Ak}
  The pairs of manifolds presented in \cref{subsection:row-5,subsection:row-6,subsection:row-12}, due to Kreck~\cite{Kreck-exotic}, Cappell-Shaneson~\cites{CS-new-four-mflds-bams,CS-new-four-mflds-annals}, and Akbulut~\cites{Akbulut-on-fake,akbulut-fake-gluck,Akbulut-fake-4-manifold}  respectively, are smooth, nonorientable $4$-manifolds that are simple homotopy equivalent (in fact they are now known to be homeomorphic~\cites{F,HKT-nonorientable,Wang-nonorientable}) but not stably diffeomorphic. By a result of Gompf~\cite{Gompf-stable} (see~\cref{thm:gompf-stable}) smooth, orientable $4$-manifolds that are (stably) homeomorphic are stably diffeomorphic, so it is inevitable that these examples are nonorientable.
\end{example}

\begin{example}\label{intro-example:teichner}
    The examples of Teichner~\cite{Teichner:1997-1} from \cref{subsection:row-11} provide smooth, orientable $4$-manifolds that are simple homotopy equivalent but not stably homeomorphic. These can be used to construct arbitrarily large collections which have these properties pairwise. We show in \cref{prop:finite-stab-homeo} that every such collection is finite.
\end{example}

\begin{example}\label{intro-example:leech}
 In \cref{subsection:row-9} we discuss two closed, orientable, simply connected topological $4$-manifolds that are stably homeomorphic but not homotopy equivalent, because they have inequivalent intersection pairings.  Proposition~\ref{prop:simply-conn-hom-equiv-iff-stably-diff} explains that such a phenomenon cannot occur for smooth, simply connected, closed 4-manifolds.
\end{example}

\begin{example}\label{intro-example:kreckschafer}
  The examples of Kreck--Schafer~\cite{Kreck-Schafer} discussed in \cref{subsection:row-10} are smooth, orientable $4$-manifolds (with nontrivial fundamental groups) that are stably diffeomorphic, but not homotopy equivalent. They also have isometric intersection pairings. 
\end{example}

The constructions of the manifolds mentioned in \cref{intro-example:teichner,intro-example:leech} use Freedman's work, which we discuss presently.  First we highlight the following open question comparing stable diffeomorphism and homotopy equivalence.

\begin{question}
  Are there arbitrarily large families of smooth 4-manifolds that are all stably diffeomorphic but pairwise homotopically inequivalent?  Or even better, infinite sets with this property?
\end{question}

The early 1980s saw Freedman's work~\cite{F} showing that the Whitney trick can be applied in ambient dimension four under certain conditions, establishing the exactness of the surgery sequence and the $s$-cobordism theorem with some restrictions on the fundamental group~\cite{FQ}. See \cref{section:just-of-implications,Section:review-surgery}
for further details. Combined with subsequent work of Quinn~\cite{Quinn-annulus}, Freedman's theorem made it possible to upgrade the homotopy classification, the stable classification, or both, to homeomorphism classifications; see for example~\cites{F,FQ,HK88b,HK93b,HKT-nonorientable,Wang-nonorientable,HKT}.

 It is straightforward to see that homeomorphism implies homotopy equivalence, for general spaces.
We now explain a sequence of counterexamples to the converse for 4-manifolds, i.e.\ pairs of 4-manifolds that are homotopy equivalent but not homeomorphic. Along the way we illustrate our approach to investigating counterexamples.
Namely, while investigating the failure of the converses of the implications in the flowchart, we will progressively impose restrictions on the counterexamples, e.g.\ that they be smooth, or orientable.

\begin{example}\label{intro-example-starCP2}
The well-known pair $\CP^2$ and $\star\CP^2$~\cite{F} (see~\cref{subsection:row-3}) are homotopy equivalent but not homeomorphic. The latter manifold, sometimes called the \emph{Chern manifold}, was constructed by Freedman and is homotopy equivalent to $\CP^2$, but not homeomorphic to it.
Indeed, $\CP^2$ and $\star\CP^2$ have unequal Kirby-Siebenmann invariants, implying that $\star\CP^2$ is not smoothable.
\end{example}

The natural question then arises whether there exists a pair of smooth, closed $4$-manifolds that are homotopy equivalent but not homeomorphic. Freedman's classification result~\cites{F,FQ} implies that there is no such pair of simply connected $4$-manifolds.

\begin{example}\label{intro-example-star-RP4}
A pair that satisfies our new demand consists of $\RP^4 \# \CP^2$ and $\mathcal{R} \# \star\CP^2$~\cites{ruberman84,HKT-nonorientable,Ruberman-Stern} (see~\cref{subsection:row-4}), where $\mathcal{R}$ is a 4-manifold homotopy equivalent to $\RP^4$ but with nontrivial Kirby-Siebenmann invariant. However, $\RP^4 \# \CP^2$ and $\mathcal{R} \# \star\CP^2$ are nonorientable.
\end{example}

We can then escalate further to ask for a pair of smooth, closed, orientable $4$-manifolds that are homotopy equivalent but not homeomorphic. 

\begin{example}\label{intro-example-LxS1}
The simplest such example we know of comes from Turaev~\cite{Turaev1988} (see~\cref{subsection:row-7}), who showed that for lens spaces $L$ and $L'$ that are homotopy equivalent but not homeomorphic, the same holds for the $4$-manifolds $L \times S^1$ and $L' \times S^1$.
\end{example}

Finally, one may ask for an infinite collection of closed, smooth, orientable $4$-manifolds that are homotopy equivalent but not homeomorphic. To our knowledge, this is an open question. However, the following example answers the question for topological 4-manifolds.

\begin{example}\label{intro-thm-1}
Let $M := L \times S^1$, where $L$ is a lens space $L_{p,q}$ with $p \geq 2$, $1\leq q < p$, and $(p,q)=1$. Then Kwasik-Schultz~\cite{Kwasik-Schultz}*{Theorem~1.2} constructed an infinite collection of closed, orientable, topological $4$-manifolds $\{M_i\}_{i=1}^\infty$, that are all simple homotopy equivalent to $M$ but pairwise not homeomorphic.
The proof of Kwasik and Schultz relies on higher $\rho$-invariants. In \cref{subsection:row-13} we provide a different argument via the surgery exact sequence that enables us to establish other properties of these manifolds. For example one can assume that they are all stably homeomorphic and are pairwise not $h$-cobordant.
\end{example}

We will also show (or give citations showing) that all of the pairs of 4-manifolds discussed in \cref{intro-example-starCP2,intro-example-star-RP4,intro-example-LxS1} are simple homotopy equivalent, and that the pairs from \cref{intro-example-star-RP4,intro-example-LxS1} are stably homeomorphic.

As part of surgery programmes to classify 4-manifolds, the relations of simply homotopy equivalence, $h$-cobordism, and $s$-cobordism are prominent. All are necessary conditions for homeomorphism.
The following theorem compares these three relations. It is the main original result of the article.

\begin{theorem}\label{intro-thm-2}
  For every $n\geq 1$, there is a collection ~$\{N_i\}_{i=1}^n$ of closed, orientable, topological 4-manifolds, that are all simple homotopy equivalent and $h$-cobordant to one another, but which are pairwise not $s$-cobordant.
\end{theorem}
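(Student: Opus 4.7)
For $n=1$ the statement is trivial, so fix $n \geq 2$. The plan is to construct the $N_i$ as the ``far'' ends of a family of $5$-dimensional $h$-cobordisms starting from a single closed, orientable, topological $4$-manifold $M$ with finite fundamental group $\pi = \pi_1(M)$, realising carefully chosen elements of $\Wh(\pi)$.

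I would take $\pi = \Z/5$, so that $\pi$ is finite (hence good in Freedman's sense, making the topological $s$-cobordism theorem of Freedman--Quinn available in dimension $5$) and $\Wh(\pi) \cong \Z$ is infinite cyclic. For $M$ I would take any closed, orientable, topological $4$-manifold with $\pi_1(M) = \Z/5$; one exists by standard surgery techniques, e.g.\ modifying $S^1 \times S^3$ by surgery on a curve representing $5$ times the generator of $\pi_1(S^1\times S^3)$. For each $\tau \in \Wh(\pi)$ the Mazur--Stallings realisation theorem produces a $5$-dimensional $h$-cobordism $W_\tau \colon M \to N_\tau$ with $\tau(M \hookrightarrow W_\tau) = \tau$, built by attaching a cancelling $(2,3)$-handle pair to $M \times [0,1]$ whose attaching data encode $\tau$. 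By construction each $N_\tau$ is $h$-cobordant to $M$, and concatenating $W_{\tau_i}^{-1} \cup_M W_{\tau_j}$ yields an $h$-cobordism between any pair $N_{\tau_i}, N_{\tau_j}$.

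Let $H := \im\bigl(\hAut(M) \xrightarrow{\tau} \Wh(\pi)\bigr)$. The canonical homotopy equivalence $M \to N_\tau$ arising from deformation retraction in $W_\tau$ has Whitehead torsion $\tau$, so $N_{\tau_i}$ and $N_{\tau_j}$ are simple homotopy equivalent if and only if $\tau_i - \tau_j \in H$; I would therefore restrict the $\tau_i$ to a single coset of $H$. Since $\pi$ is good, the Freedman--Quinn topological $s$-cobordism theorem in dimension $5$ gives that $N_{\tau_i}$ and $N_{\tau_j}$ are $s$-cobordant if and only if they are homeomorphic, and a homeomorphism $N_{\tau_i} \to N_{\tau_j}$ forces $\tau_i$ and $\tau_j$ to lie in the same orbit of the natural action of $\Out(\pi) \ltimes H$ on $\Wh(\pi)$, with $\Out(\Z/5) \cong (\Z/5)^\times \cong \Z/4$ acting via induced automorphisms of $\Z[\pi]$ and $H$ by translation. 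Since $\Out(\Z/5)$ is finite, its orbits on $\Wh(\pi) \cong \Z$ are finite; hence, provided $H$ is nontrivial, each coset of $H$ is infinite and meets infinitely many $\Out(\pi)$-orbits. For every $n$ one can then select $\tau_1, \dots, \tau_n$ in a common coset of $H$ but in pairwise distinct orbits, and set $N_i := N_{\tau_i}$.

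The main obstacle is to produce a concrete $M$ for which $H$ is nontrivial, since otherwise the simple-homotopy class of $M$ within its $h$-cobordism class collapses to a point. This amounts to constructing self-homotopy equivalences of $M$ with prescribed nonzero Whitehead torsion---the genuinely original topological input---which can be done by taking $M$ to be a $4$-dimensional analogue of a lens space so that units of $\Z[\pi]$ are realised geometrically by self-equivalences. Given this, the rest of the argument is a standard assembly of Mazur--Stallings realisation, Milnor duality relating the torsions at the two ends of $W_\tau$, the Freedman--Quinn $s$-cobordism theorem, and the computation $\Wh(\Z/5) \cong \Z$.
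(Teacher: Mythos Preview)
Your approach has a fatal gap: the assertion that a homeomorphism $N_{\tau_i}\to N_{\tau_j}$ forces $\tau_i,\tau_j$ into the same $\Out(\pi)\ltimes H$-orbit is unjustified, and for $\pi=\Z/5$ it is in fact false---\emph{every} $N_\tau$ you build is homeomorphic to $M$. Here is why. Each $(N_\tau,f_\tau)$ determines an element of $\mathcal{S}^s(M)$ (the map $f_\tau$ is simple because the involution on $\Wh(\Z/5)\cong\Z$ is trivial, so by Milnor duality $\tau(f_\tau)=\tau-\bar\tau=0$; this also means your discussion of $H$ is beside the point). This element maps to the basepoint in $\mathcal{S}^h(M)$, since $W_\tau$ is an $h$-cobordism over $M$. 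But for $G$ of odd order Bak proved $L_5^s(\Z G)=0$, so the surgery sequence gives $\mathcal{S}^s(M)\hookrightarrow\mathcal{N}(M)$; as the forgetful map to $\mathcal{S}^h(M)$ factors through this injection, $\mathcal{S}^s(M)\to\mathcal{S}^h(M)$ is itself injective. Hence $[N_\tau,f_\tau]=[M,\id]$ in $\mathcal{S}^s(M)$, and since $\Z/5$ is good the $s$-cobordism theorem yields $N_\tau\cong M$ for every $\tau$. In other words, for $\pi_1=\Z/5$ all $h$-cobordisms are inertial, and your construction produces a single homeomorphism type.

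The invariant that actually distinguishes $s$-cobordism classes among manifolds $h$-cobordant and simple homotopy equivalent to $M$ is not the bare torsion $\tau\in\Wh(\pi)$ but (via Rothenberg) the class in $\hat H^*(\Z/2;\Wh(\pi))$, equivalently the kernel $K=\ker\big(L_5^s(\Z\pi)\to L_5^h(\Z\pi)\big)$ acting on $\mathcal{S}^s(M)$. For $\Z/5$ this kernel is zero. The paper instead takes $M_r=L_{2^r,1}\times S^1$ with $\pi_r=\Z/2^r\times\Z$, where Shaneson splitting and computations of Bak and Cappell--Shaneson give $K\cong(\Z/2)^{n(r)}$ with $n(r)$ growing like $2^{r-2}/3$. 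One then lets $K$ act by Wall realisation on $\mathcal{S}^s(M_r)$; the resulting manifolds are automatically simple homotopy equivalent and $h$-cobordant to $M_r$, and are pairwise not $s$-cobordant once one mods out by $\hAut^s(M_r)$. The substantial remaining work---and the genuinely new input in the paper---is a polynomial-in-$r$ upper bound on $|\hAut(M_r)|$, obtained from the Hambleton--Kreck braid by estimating the spin bordism term $\wh\Omega_5^{\Spin}(B_r,M_r)$ via the Atiyah--Hirzebruch spectral sequence. Comparing $2^{n(r)}$ against this bound gives arbitrarily many orbits for $r$ large.
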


Our proof makes use of a braid of exact sequences due to Hambleton-Kreck~\cite{Hambleton-Kreck-HSEs} which enables one to estimate the size of the group of homotopy automorphisms of the $4$-manifold~$L_{p,q} \times S^1$, where $L_{p,q}$ is a lens space. We combine this with the surgery exact sequence for 4-manifolds with fundamental group $\Z/p \times \Z$ to construct our families of examples.

\begin{question}
Is there a pair of smooth $4$-manifolds that are simple homotopy equivalent and $h$-cobordant, but not $s$-cobordant?
If so, what is the largest possible cardinality of such a collection of $4$-manifolds?
\end{question}

One should try to answer this question with the strongest possible assumptions on categories for the $h$- and $s$-cobordisms.

Finally, at opposite ends of the flowchart in~\cref{fig:flowchart}, one can compare with diffeomorphism and with $\CP^2$-stable homeomorphism/diffeomorphism.
The $\CP^2$-stable classification is one of the most tractable~\cites{KPT-CP2,KT-CP2}.
It is easy to see that it differs markedly from the previously discussed classifications, as follows.

\begin{example}
  The $4$-manifolds $S^2 \times S^2$ and $S^2 \mathbin{\wt{\times}} S^2$ are smooth, simply connected, have equal Euler characteristic, and are $\CP^2$-stably diffeomorphic but are not stably homeomorphic and not homotopy equivalent. See \cref{subsection:row-2}.
\end{example}

The diffeomorphism classification, by contrast, is extremely difficult, and in all known cases varies drastically from the corresponding homeomorphism classifications.

\begin{example}
 There are infinitely many smooth, orientable, simply connected 4-manifolds that are all smoothly $s$-cobordant and homeomorphic to one another, but not diffeomorphic (see e.g.~\cites{GompfStip,akbulut-book}). Since exotic behaviour of this sort is not our primary focus, we only present the first pair of such manifolds discovered, due to Donaldson~\cite{Donaldson:1987-1}, in \cref{subsection:row-8}.  It follows that there is no smooth $s$-cobordism theorem in dimension $4$.
\end{example}



There are three implications in~\cref{fig:flowchart} for which we do not yet know whether the converses hold.

\begin{question}\label{q:he-not-she}
 Does there exist a pair of closed $4$-manifolds that are homotopy equivalent but not simple homotopy equivalent?
\end{question}

It would be interesting if such examples could be found which are (i) smoothable, (ii) orientable, (iii) topologically $h$-cobordant, or (iv) smoothly $h$-cobordant. The most well-known examples in odd dimensions of homotopy equivalent, but not simple homotopy equivalent, manifolds are lens spaces. The na\"ive construction of taking the products of homotopy equivalent lens spaces with $S^1$ does not work by the formula for Whitehead torsion~\eqref{it:lens-4}.

\begin{question}
 Is there a pair of $4$-manifolds that are (topologically) $s$-cobordant but not homeomorphic?
\end{question}

Note that a positive answer to this question would contradict the conjecture that all groups are good. For more details on this conjecture, see e.g.~\cite{DET-book-enigmata}. For more on the $s$-cobordism theorem in dimension four, see \cref{section:just-of-implications}.

\begin{question}
 Is there a pair of smooth $4$-manifolds that are smoothly $h$-cobordant but not smoothly $s$-cobordant?
\end{question}

\cref{intro-thm-2} provides topological examples of this phenomenon; since the construction uses the surgery sequence the examples are not obviously smoothable.

\cref{intro-example-CS-K-Ak} gives nonorientable examples for the following question, but in the orientable case this is open.
Note that smooth, simply connected $4$-manifolds that are homotopy equivalent are smoothly $h$-cobordant by Wall's theorem~\cite{Wall-on-simply-conn-4mflds}.

\begin{question}
 Is there a pair of smooth, orientable $4$-manifolds that are topologically but not smoothly $h$-cobordant?
\end{question}



As mentioned before, we have restricted ourselves throughout this paper to closed $4$-manifolds. However, interesting phenomena also arise for $4$-manifolds with nonempty boundary and for noncompact $4$-manifolds, e.g.\ the existence of corks~\cite{akbulut-fake-compact} and exotic smooth structures on $\R^4$~\cite{gompf-exotic} respectively. Other work in these directions include~\cites{akbulut-zeeman,taubes-uncountable,Boyer1,Boyer2,Vogel, Stong-4-manifolds-boundary,akbulut-ruberman,gompf-MCG,CP,CPP,orson-powell-mcg,orson-powell-spines}.

We hope that readers will be motivated by this article to answer the questions we could not, or to follow the paradigm of progressively imposing restrictions to discover new unanswered questions of their own.

\addtocontents{toc}{\SkipTocEntry}
\subsection*{Outline}
In \cref{sec:defs} we define the equivalence relations we consider. In~\cref{section:just-of-implications} we justify the implications shown in \cref{fig:flowchart}. \cref{Section:review-surgery} provides a brief review of the surgery exact sequence. In~\cref{section:counter-examples} we describe various constructions of $4$-manifolds and present a table summarising the properties of our examples.
\addtocontents{toc}{\SkipTocEntry}
\subsection*{Conventions}
We write $\Z/2 = \{0,1\}$ for the integers modulo 2, a group under addition, and $C_2 = \{\pm 1\}$ for the cyclic group of order 2, with multiplication as the group operation. The symbol $\simeq$ denotes homotopy equivalence, while $\simeq_s$ denotes simple homotopy equivalence. Depending on the context the symbol $\cong$ denotes either homeomorphism or diffeomorphism.
\addtocontents{toc}{\SkipTocEntry}
\subsection*{Acknowledgements}

The authors thank the Max Planck Institute for Mathematics in Bonn, where much of the research leading to this article occurred.
We also thank Jim Davis, Kent Orr, Patrick Orson, and Peter Teichner for several helpful discussions.
Finally we are very grateful to an anonymous referee for many helpful comments which helped us to improve the exposition.

DK was supported by the Deutsche Forschungsgemeinschaft under Germany's Excellence Strategy - GZ 2047/1, Projekt-ID 390685813.
MP was partially supported by EPSRC New Investigator grant EP/T028335/1 and EPSRC New Horizons grant EP/V04821X/1.

\tableofcontents

\section{Equivalence relations on 4-manifolds}
\label{section:definitions-of-relations}\label{sec:defs}

Recall that we implicitly assume throughout that $4$-manifolds are closed and connected. We assume that the reader is familiar with homotopy equivalence, homeomorphism, and diffeomorphism of manifolds, and so we shall not define them.
The classification of manifolds with respect to these three notions, and their comparison, is a central area of research.
For example, the Poincar\'e conjecture, which has occupied topologists for over a century, asks for each $n$ whether every homotopy equivalence from an $n$-manifold to the $n$-sphere is homotopic to a homeomorphism, or even to a diffeomorphism.\footnote{The Poincar\'{e} conjecture is true in the topological category for all $n$, due to Perelman~\cites{Perelman:2002-1,Perelman:2003-1,Perelman:2003-2}, Freedman~\cite{F}, and Newman~\cite{Newman}. It is true in the PL category for all $n \neq 4$ due to Perelman [loc.~cit.], Smale~\cites{smale-poincare-bulletin,Smale-h-cob-thm}, Stallings~\cite{Stallings-BAMS}, and Zeeman~\cite{zeeman-poincare}. In the smooth category it is known to hold in dimensions $1,2,3,5,6,12,56,61$, due to Perelman~[loc.~cit.], Kervaire-Milnor~\cite{Kervaire-Milnor:1963-1}, Isaksen~\cite{Isaksen}, and Wang-Xu~\cite{Wang-Xu}; is false in all odd dimensions other than $1,3,5,61$; and is false in all even dimensions $8 \leq n \leq 200$ other than $12$, $56$, $142$, $166$, $176$, $188$. At the time of writing the smooth version is open in dimensions $4$, $142$, $166$, $176$, $188$, and for infinitely many even dimensions~$n > 200$; see \cites{Behrens-Hill-Hopkins-Mahowald,Isaksen-Wang-Xu-PNAS} for the published state of the art.}   In dimensions at least five, surgery theory provides a concrete, effective framework within which one can try to improve a classification of manifolds up to homotopy equivalence to a classification up to homeomorphism or diffeomorphism. The programme can be applied to topological 4-manifolds under a restriction on the fundamental group; see \cref{sec:surgery} for an overview.

Next we discuss the various notions of stable equivalence.

\begin{definition}\label{defn:stable-homeo}
The $4$-manifolds $M$ and $N$ are said to be \emph{stably homeomorphic} if there  are integers~$s,t$ such that $M \# \bighash{s} (S^2 \times S^2)$ and $N \#  \bighash{t} (S^2 \times S^2)$ are homeomorphic.
 They are said to be \emph{$\CP^2$-stably homeomorphic} if there are integers $s,t$ such that $M \# \bighash{s} \CP^2$ and $N \# \bighash{t} \CP^2$ are homeomorphic, for some choices of connected sum.
\end{definition}

\begin{definition}\label{defn:stab-diff}
  The smooth $4$-manifolds $M$ and $N$ are said to be \emph{stably diffeomorphic} if there are integers $s,t$ such that $M \# \bighash{s} (S^2 \times S^2)$ and $N \# \bighash{t} (S^2 \times S^2)$ are diffeomorphic. They are said to be \emph{$\CP^2$-stably diffeomorphic} if there are integers $s,t$ such that $M  \# \bighash{s} \CP^2$ and $N \# \bighash{t} \CP^2$ are diffeomorphic, for some choices of connected sum.
\end{definition}

Note that $S^2 \times S^2$ admits an orientation reversing self-diffeomorphism, so there is essentially only one choice of connected sum. On the other hand $\CP^2$ does not admit any such diffeomorphism (nor homeomorphism), so for oriented manifolds there are two possible connected sums up to diffeomorphism/homeomorphism, usually denoted $M \# \CP^2$ and $M \# \ol{\CP^2}$. Therefore, the definitions above say that $M$ and $N$ are $\CP^2$-stably diffeomorphic (resp.\ homeomorphic) if there are integers $s_1,s_2,t_1,t_2$ such that $M  \# \bighash{s_1} \CP^2 \# \bighash{s_2} \ol{\CP^2}$ and $N  \# \bighash{t_1} \CP^2 \# \bighash{t_2} \ol{\CP^2}$ are diffeomorphic (resp.\ homeomorphic). Note that for nonorientable manifolds, there is a unique connected sum~$N \# \CP^2$. We remark that some authors require $s=t$ in the definition of stable homeomorphism and diffeomorphism. Manifolds $M$ and $N$ which are stably homeomorphic or diffeomorphic have $s=t$ in our definition exactly when $\chi(M)=\chi(N)$.

We emphasise that ``stably'' refers by default to connected sum with copies of $S^2 \times S^2$, and only ``$\CP^2$-stably'' refers to connected sum with copies of $\CP^2$.

To motivate \cref{defn:stable-homeo,defn:stab-diff}, consider an alternative strategy to classify manifolds, based on Kreck's modified surgery~\cite{surgeryandduality} and realised by Hambleton-Kreck for e.g.\ 4-manifolds with finite cyclic fundamental group in~\cites{Hambleton-Kreck:1988-1,HK88b,Hambleton-Kreck-93,HK93b}: first classify manifolds up to stable homeomorphism, and then investigate the homeomorphism types within each stable class. In the latter step one attempts to prove that $S^2\times S^2$ summands can be cancelled, and this is what Hambleton and Kreck achieved for finite cyclic fundamental groups and also for more general finite groups under some additional hypotheses.
This strategy in principle also applies to diffeomorphism classifications, but the cancellation step is much harder.  Similarly, another approach to classification is to first classify manifolds up to $\CP^2$-stable equivalence, and then attempt to blow down extraneous $\CP^2$ summands.

Next we discuss $h$-cobordisms, simple homotopy equivalences, and $s$-cobordisms.

\begin{definition}
The $4$-manifolds $M$ and $N$ are \emph{topologically $h$-cobordant} if there is a $5$-dimensional compact topological cobordism $(W;M,N)$ where the inclusion maps $M \hookrightarrow W$ and $N \hookrightarrow W$ are homotopy equivalences. The manifold $W$ is called an \emph{$h$-cobordism}. If $M$ and $N$ are smooth, they are \emph{smoothly $h$-cobordant} if they cobound a smooth $h$-cobordism.
\end{definition}

Associated with a homotopy equivalence $f \colon X \to Y$ between CW complexes $X$ and $Y$ is an algebraic invariant called the \emph{Whitehead torsion} $\tau(f) \in \Wh(\pi_1(X))$, with values in the \emph{Whitehead group} of $\pi_1(X)$, which we define next. Let $\GL(\Z[\pi_1(X)])$ be the stable general linear group, and let $\E(\Z[\pi_1(X)])$ be the subgroup of elementary matrices, i.e.\ consisting of products of the matrices that produce row and column operations.  By definition \[
\K_1(\Z[\pi_1(X)]) := \GL(\Z[\pi_1(X)])/\E(\Z[\pi_1(X)])
\]
and $\Wh(\pi_1(X)) := \K_1(\Z[\pi_1(X)])/\{\pm (g) \mid g \in \pi_1(X)\}$. For example $\Wh(\{e\}) = 0$, essentially because of the Euclidean algorithm.  See~\cite{Cohen} for an accessible introduction to simple homotopy theory, including more examples of Whitehead groups and the definition of Whitehead torsion.

\begin{definition}
  A homotopy equivalence $f \colon X \to Y$ between CW complexes $X$ and $Y$ is a \emph{simple homotopy equivalence} if its Whitehead torsion $\tau(f)$ vanishes.
\end{definition}

By Chapman's theorem~\cite{chapman} the Whitehead torsion $\tau(f)$ only depends on the homeomorphism type of $X$ and $Y$. Hence Whitehead torsion is well-defined for homotopy equivalences between manifolds which are homeomorphic to CW complexes, e.g. smooth manifolds or closed manifolds of dimension $\neq 4$. It is an open question whether every topological $4$-manifold is homeomorphic to a $4$-dimensional CW complex.
However, we can define the notion of simple homotopy equivalence of topological manifolds as follows.
Embed $M$ in high-dimensional Euclidean space. By \cite{Kirby-Siebenmann:1977-1}*{Essay III, Section 4}, there is a normal disc bundle $D(M)$ admitting a triangulation. The inclusion map $z_M \colon M \to D(M)$ of the $0$-section 
is a homotopy equivalence.
Let $z_M^{-1}$ denote the homotopy inverse of $z_M$.

\begin{definition}
 We say that a homotopy equivalence $f \colon M \to N$ between topological manifolds (not necessarily of the same dimension) is a \emph{simple homotopy equivalence} if the composition $z_N \circ f \circ z_M^{-1} \colon D(M) \to D(N)$ is a simple homotopy equivalence.
\end{definition}

The Whitehead torsion $\tau(W;M)$ of an $h$-cobordism $(W;M,N)$ is by definition the Whitehead torsion of the inclusion map $M \hookrightarrow W$. This also coincides with the Whitehead torsion of the relative chain complex $C_*(\wt{W},\wt{M})$, where $\wt{W}$ and $\wt{M}$ are the universal covers.

\begin{definition}
The $4$-manifolds $M$ and $N$ are \emph{topologically $s$-cobordant} if they cobound a topological $h$-cobordism $W$ with trivial Whitehead torsion. The manifold $W$ is called an \emph{$s$-cobordism}. If in addition $M$, $N$, and $W$ are smooth, then $M$ and $N$ are \emph{smoothly $s$-cobordant} and $W$ is called a \emph{smooth $s$-cobordism}.
\end{definition}

An $h$- or $s$-cobordism approximates a product, in the eyes of homotopy equivalence and simple homotopy equivalence respectively.
One of the most spectacular results of the 20th century was Smale's $h$-cobordism theorem~\cites{smale-poincare-bulletin,Smale-h-cob-thm}, which states that smooth, simply connected $h$-cobordisms between $n$-manifolds with $n\geq 5$ are indeed homeomorphic to products. This was later extended to other categories and to the case of $s$-cobordisms~\cites{Smale:1962-1,Barden,Mazur:1963-1,Stallings-Tata,Kirby-Siebenmann:1977-1}. Consequences include the high-dimensional Poincar\'{e} conjecture in the piecewise-linear category in dimension at least five.

In dimension four, the celebrated work of Freedman and Quinn~\cites{F,FQ} includes an $s$-cobordism theorem, with a restriction on fundamental groups. This is the principal method for establishing the existence of a homeomorphism between $4$-manifolds.   We state the result in the next section as \cref{thm:scob} and we outline the proof.

\section{Justification of implications}\label{section:just-of-implications}
The implications given in green in \cref{fig:flowchart} are immediate from the definitions. Now we justify the other implications.

\begin{proposition}
Stably homeomorphic $4$-manifolds are $\CP^2$-stably homeomorphic. Stably diffeomorphic $4$-manifolds are $\CP^2$-stably diffeomorphic.
\end{proposition}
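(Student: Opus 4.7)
The plan is to reduce the proposition to the classical (smooth) diffeomorphism
\[
(S^2 \times S^2) \# \CP^2 \;\cong\; \CP^2 \# \CP^2 \# \ol{\CP^2},
\]
which follows from the fact that both sides are simply connected, smooth $4$-manifolds with the same odd, indefinite intersection form, together with the smooth classification of such manifolds after a single $\CP^2$-stabilisation (indeed this identity is the prototype of all $\CP^2$-stable phenomena). Since the identity is a diffeomorphism, it holds equally well in the topological category.

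Suppose first that $M$ and $N$ are stably homeomorphic, so that there exist $s,t\geq 0$ with
\[
M \# \bighash{s}(S^2\times S^2) \;\cong\; N \# \bighash{t}(S^2\times S^2).
\]
Take connected sum of both sides with $\CP^2$ and apply the identity above repeatedly, converting each $S^2\times S^2$ summand into $\CP^2 \# \ol{\CP^2}$ at the cost of carrying along a single extra $\CP^2$. After $s$ iterations on the left and $t$ iterations on the right one obtains
\[
M \# \bighash{(s+1)}\CP^2 \# \bighash{s}\ol{\CP^2} \;\cong\; N \# \bighash{(t+1)}\CP^2 \# \bighash{t}\ol{\CP^2},
\]
which is exactly the definition of $\CP^2$-stable homeomorphism (for nonorientable $M,N$ the distinction between $\CP^2$ and $\ol{\CP^2}$ disappears, so the argument simplifies but still goes through). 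The same sequence of diffeomorphisms proves the smooth statement verbatim, replacing ``$\cong$'' by smooth diffeomorphism throughout.

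There is no real obstacle here; the only thing to be careful about is bookkeeping of orientations and making sure the stabilisation identity is invoked in the correct category. In particular, since the identity $(S^2\times S^2)\#\CP^2 \cong \bighash{2}\CP^2\#\ol{\CP^2}$ is a smooth diffeomorphism on the nose, no additional input is needed to upgrade the topological argument to the smooth one.
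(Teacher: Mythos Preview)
Your proof is correct and follows exactly the same approach as the paper: both rely on the diffeomorphism $(S^2\times S^2)\#\CP^2 \cong \CP^2\#\ol{\CP^2}\#\CP^2$, with your version simply spelling out the iteration and bookkeeping that the paper leaves implicit.
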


\begin{proof}
Both statements follow from the diffeomorphism
\[
(S^2 \times S^2) \# \CP^2 \cong \CP^2 \# \ol{\CP^2} \# \CP^2.\qedhere
\]
\end{proof}

To establish the relationship between homotopy equivalence and $\CP^2$-stable homeomorphism, we will use the following theorem of Kreck. Recall that an identification of the fundamental group $\pi_1(M)$ of a $4$-manifold $M$ with a group $\pi$ determines a map $c_M \colon M \to B\pi$, up to homotopy, classifying the universal cover, where $B\pi \simeq K(\pi,1)$ is the classifying space.

\begin{theorem}[Kreck~\cite{surgeryandduality}, see also~\cite{KPT}*{Theorem~1.2}]\label{thm:kreck-cp2}
\leavevmode
\begin{enumerate}[(i)]
\item\label{item-kreck-1}   Two closed, smooth $4$-manifolds $M$ and $N$ with fundamental group isomorphic to $\pi$ and orientation character $w \colon \pi \to C_2$  are $\CP^2$-stably diffeomorphic if and only if $c_{M*}[M] = c_{N*}[N]  \in H_4(\pi;\Z^w)/\pm \Aut(\pi,w)$.
\item\label{item-kreck-2} Two closed, topological $4$-manifolds $M$ and $N$ with fundamental group isomorphic to $\pi$ and orientation character $w \colon \pi \to C_2$  are $\CP^2$-stably homeomorphic if and only if their Kirby-Siebenmann invariants in $\Z/2$ agree and $c_{M*}[M] = c_{N*}[N]  \in H_4(\pi;\Z^w)/\pm \Aut(\pi,w)$.
\end{enumerate}
\end{theorem}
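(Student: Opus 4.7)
The plan is to prove both directions using Kreck's modified surgery theory \cite{surgeryandduality}. For the ``only if'' direction, I will first observe that $\CP^2$, $\ol{\CP^2}$, and (in the topological case) $\star\CP^2$ are simply connected, so their classifying maps to $B\pi$ factor through a point. Hence their fundamental classes push forward to zero in $H_4(\pi;\Z^w)$, and compatibility of the classifying map with connected sum yields $c_{M\#X*}[M\#X] = c_{M*}[M]$ for any such simply connected $X$. The Kirby-Siebenmann invariant is additive under connected sum and $\ks(\CP^2)=0$, so it is also preserved under $\CP^2$-stabilisation. The $\pm\Aut(\pi,w)$ quotient exactly accommodates the ambiguity in identifying $\pi_1(M)\cong\pi$ and in the choice of orientation.

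For the converse, the crucial observation is that after a single $\CP^2$-stabilisation the normal 1-type becomes the simplest possible one. Explicitly, the universal cover of $M\#\CP^2$ contains lifts of the $\CP^2$ summand, on which $w_2$ is nonzero, so its normal 1-type reduces to $\xi\colon B\pi \times B\mathrm{SO} \to B\mathrm{O}$ (twisted by $w$, with $B\mathrm{STOP}$ replacing $B\mathrm{SO}$ topologically). Hence $M\#\CP^2$ and $N\#\CP^2$ share a common normal 1-type $\xi$. The associated normal bordism group fits into an Atiyah-Hirzebruch spectral sequence with $H_4(\pi;\Z^w)$ as edge quotient; smoothly, the filtration kernel is generated by $\pm\CP^2$, corresponding to $\Omega_4^{\mathrm{SO}}(\pt)=\Z$. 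The hypothesis $c_{M*}[M]=c_{N*}[N]$ modulo $\pm\Aut(\pi,w)$, together with suitable further $\CP^2$- and $\ol{\CP^2}$-stabilisations to cancel discrepancies from the kernel, then yields a normal 1-type bordism $W$ from (a stabilisation of) $M$ to (a stabilisation of) $N$.

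To conclude, I will invoke Kreck's main theorem: two closed 4-manifolds with a common normal 1-type and the same bordism class over it become diffeomorphic (resp.\ homeomorphic) after connected sum with finitely many copies of $S^2\times S^2$. The resulting $S^2\times S^2$-stabilisation is converted into $\CP^2$-stabilisation via the classical diffeomorphism $(S^2\times S^2)\#\CP^2 \cong 2\CP^2\#\ol{\CP^2}$. The topological case is formally identical, except that $\Omega_4^{\TOPSpin}(\pt)$ carries an extra $\Z/2$ generator represented by $\star\CP^2$ and detected by $\ks$; equality of the Kirby-Siebenmann invariants of $M$ and $N$ is precisely what ensures this extra summand does not obstruct the bordism step. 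The hard part will be the careful invocation of Kreck's theorem, especially verifying that the modified surgery obstruction is absorbed by $S^2\times S^2$-stabilisation, and handling the twisted coefficient system correctly in the nonorientable case.
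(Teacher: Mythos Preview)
The paper does not give its own proof of this statement: it is stated as a theorem of Kreck with the citation \cite{surgeryandduality}, and then immediately applied to deduce \cref{thm:apply-kreck}. So there is nothing in the paper to compare your argument against.

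That said, your outline is a faithful sketch of the standard argument from Kreck's modified surgery. The key points---that a single $\CP^2$ connected sum forces the universal cover to be non-spin and hence pushes the normal $1$-type down to $B\pi \times B\mathrm{SO} \to BO$ (or $B\mathrm{STOP}$ topologically), that the resulting bordism group has $H_4(\pi;\Z^w)$ as the top edge quotient with the filtration kernel accounted for by further $\CP^2/\ol{\CP^2}$ summands, and that Kreck's Theorem~C converts a normal $1$-type bordism into $(S^2\times S^2)$-stable equivalence which is then traded for $\CP^2$-stable equivalence via $(S^2\times S^2)\#\CP^2 \cong \CP^2\#\ol{\CP^2}\#\CP^2$---are all correct and in the right order. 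One small slip: in the topological case you write $\Omega_4^{\TOPSpin}(\pt)$, but the relevant bordism group here is oriented topological bordism $\Omega_4^{\mathrm{STOP}}(\pt)\cong \Z\oplus\Z/2$ (signature and Kirby--Siebenmann), not the spin version; the extra $\Z/2$ is indeed what forces the $\ks$ hypothesis.
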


Here $\Aut(\pi,w)$ denotes the set of automorphisms of $\pi$ compatible with the map $w$. We have to factor out by the action of $\pm \Aut(\pi,w)$ in order to account for the choice of identifications $\pi_1(M) \cong \pi$ and $\pi_1(N) \cong \pi$, and for the choice of (twisted) fundamental classes in $H_4(-;\Z^w)$.

The \emph{Kirby-Siebenmann invariant} $\ks(M)\in\Z/2$ of a $4$-manifold $M$ is by definition the unique obstruction for the stable tangent microbundle of $M$ to admit a lift to a piecewise linear bundle. See~\citelist{\cite{Kirby-Siebenmann:1977-1}*{p.\ 318}\cite{FQ}*{Section~10.2B}\cite{guide}*{Section~8.2}} for further details on the definition. In general it will suffice for us to know that the Kirby-Siebenmann invariant satisfies strong additivity properties, in particular under gluing and connected sum, and that it vanishes for a $4$-manifold $M$ if and only if $M\times \R$ admits a smooth structure, if and only if $M \#  \bighash{k} S^2\times S^2$ admits a smooth structure for some $k$.

\begin{theorem}\label{thm:apply-kreck}~
\begin{enumerate}
\item\label{item:theorem-on-equiv-relations-1}   Homotopy equivalent 4-manifolds with equal Kirby-Siebenmann invariants are $\CP^2$-stably homeomorphic.
\item\label{item:theorem-on-equiv-relations-3} Smooth $4$-manifolds that are $\CP^2$-stably homeomorphic are also $\CP^2$-stably diffeomorphic.
\end{enumerate}
\end{theorem}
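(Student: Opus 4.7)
The plan is to derive both statements directly from \cref{thm:kreck-cp2}, with the main inputs being that a homotopy equivalence transports the twisted fundamental class through the classifying map to $B\pi$, and that smoothness of a $4$-manifold forces its Kirby-Siebenmann invariant to vanish.

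For part~(\ref{item:theorem-on-equiv-relations-1}), let $f \colon M \to N$ be a homotopy equivalence between $4$-manifolds with equal Kirby-Siebenmann invariants. Use $f_*$ to identify $\pi := \pi_1(M) \cong \pi_1(N)$; under this identification the orientation characters $w \colon \pi \to C_2$ agree, and one can arrange $c_N \circ f \simeq c_M$. Since a homotopy equivalence of closed $n$-manifolds has degree $\pm 1$, it sends a twisted fundamental class to a twisted fundamental class up to sign, so $f_*[M] = \pm[N]$ in $H_4(N;\Z^w)$. Pushing forward then yields $c_{M*}[M] = \pm c_{N*}[N]$ in $H_4(\pi;\Z^w)$, and these classes therefore agree in $H_4(\pi;\Z^w)/\pm\Aut(\pi,w)$. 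Combined with the hypothesis $\ks(M)=\ks(N)$, part~(\ref{item-kreck-2}) of \cref{thm:kreck-cp2} then gives that $M$ and $N$ are $\CP^2$-stably homeomorphic.

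For part~(\ref{item:theorem-on-equiv-relations-3}), let $M$ and $N$ be smooth $4$-manifolds that are $\CP^2$-stably homeomorphic. Connected sum with $\CP^2$ changes neither the fundamental group nor the orientation character, so a stabilising homeomorphism yields an identification $\pi_1(M) \cong \pi_1(N) =: \pi$ with matching orientation character $w$. Applying the ``only if'' direction of part~(\ref{item-kreck-2}) of \cref{thm:kreck-cp2}, we deduce $c_{M*}[M] = c_{N*}[N]$ in $H_4(\pi;\Z^w)/\pm\Aut(\pi,w)$. Since $M$ and $N$ are smooth, their Kirby-Siebenmann invariants vanish automatically, so part~(\ref{item-kreck-1}) of \cref{thm:kreck-cp2} in its ``if'' direction gives that $M$ and $N$ are $\CP^2$-stably diffeomorphic.

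The only real subtlety is the set-up in part~(\ref{item:theorem-on-equiv-relations-1}): one needs to check that $f$ can be used to identify fundamental groups and orientation characters and then to arrange $c_N \circ f \simeq c_M$, before comparing images of fundamental classes in $H_4(\pi;\Z^w)$. Beyond this naturality check, both parts are essentially direct unpackings of Kreck's classification.
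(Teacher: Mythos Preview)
Your proof is correct and follows essentially the same approach as the paper: both parts are deduced directly from the two directions of \cref{thm:kreck-cp2}, with part~(\ref{item:theorem-on-equiv-relations-1}) handled by setting $c_M := c_N \circ f$ so that the pushed-forward fundamental classes agree in $H_4(\pi;\Z^w)/\pm\Aut(\pi,w)$, and part~(\ref{item:theorem-on-equiv-relations-3}) handled by reading off the fundamental-class equality from \cref{thm:kreck-cp2}\,(\ref{item-kreck-2}) and feeding it into \cref{thm:kreck-cp2}\,(\ref{item-kreck-1}). Your extra remarks (the degree-$\pm 1$ argument, and that smoothness forces $\ks=0$) are fine but not strictly needed, since the sign is absorbed by the $\pm\Aut(\pi,w)$ quotient and part~(\ref{item-kreck-1}) of Kreck's theorem does not involve the Kirby-Siebenmann invariant.
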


\begin{proof}
For the first implication, let $M$ and $N$ be homotopy equivalent $4$-manifolds with equal Kirby-Siebenmann invariants. Fix a map $c_N\colon N\to B\pi$ as mentioned above the statement of \cref{thm:kreck-cp2} and let $f\colon M\to N$ be the claimed homotopy equivalence. Then define $c_M:=c_N\circ f$. This ensures that $c_{M*}[M] = c_{N*}[N]  \in H_4(\pi;\Z^w)/\pm \Aut(\pi,w)$, and so by~\cref{thm:kreck-cp2}\,\eqref{item-kreck-2}, we see that $M$ and $N$ are $\CP^2$-stably homeomorphic.

For the second implication, let $M$ and $N$ be $\CP^2$-stably homeomorphic and smooth. By~\cref{thm:kreck-cp2}\,\eqref{item-kreck-2} we see that $c_{M*}[M] = c_{N*}[N]  \in H_4(\pi;\Z^w)/\pm \Aut(\pi,w)$. Then apply~\cref{thm:kreck-cp2}\,\eqref{item-kreck-1} to see that $M$ and $N$ are $\CP^2$-stably diffeomorphic.
\end{proof}

Next we show the relationship between $h$-cobordism and stable diffeomorphism. The case of simply connected $4$-manifolds was addressed by Wall in \cite{Wall-on-simply-conn-4mflds}*{Theorem~3}. A similar argument also applies in the general setting as explained by Lawson~\cite{Lawson-decomposing-5-mnfs}*{Proposition}. We sketch the proof.

\begin{theorem}[\citelist{\cite{Wall-on-simply-conn-4mflds}*{Theorem~3}\cite{Lawson-decomposing-5-mnfs}*{Proposition}}]\label{thm:h-cob-implies-stably-diff-homeo}
Smoothly $h$-cobordant 4-manifolds are stably diffeomorphic. Similarly, topologically $h$-cobordant 4-manifolds are stably homeomorphic.
\end{theorem}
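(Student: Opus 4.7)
The strategy is to simplify a handle decomposition of the $5$-dimensional $h$-cobordism $W$ between $M$ and $N$ until only $2$- and $3$-handles remain, and then to use connected sums with $S^2\times S^2$ to geometrically cancel the remaining handle pairs. First, I would equip $W$ with a handle decomposition relative to $M$; in the smooth setting this follows from Morse theory, while in the topological setting one invokes the Kirby-Siebenmann handle theory for topological manifolds of dimension at least five.

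Standard cancellation of $0$-handles with $1$-handles (and dually of $5$-handles with $4$-handles), followed by Wall's handle-trading trick, which is valid because $W$ has dimension $5$, allows me to exchange the remaining $1$-handles for $3$-handles and dually the $4$-handles for $2$-handles. This reduces $W$ to a decomposition using only $2$- and $3$-handles. Since $W$ is an $h$-cobordism the numbers of $2$- and $3$-handles agree, say $n$ of each, and the algebraic intersection matrix over $\Z[\pi_1(M)]$ between the belt spheres $B_1,\ldots,B_n$ of the $2$-handles and the attaching spheres $A_1,\ldots,A_n$ of the $3$-handles, all lying in the middle level $M_1\subset W$, can be arranged to be the identity after handle slides.

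The final step is to upgrade this algebraic cancellation to a geometric one. In dimensions greater than $5$ the Whitney trick finishes the proof, giving the $h$-cobordism theorem; in dimension $4$ this generally fails, which is where stabilization enters. I would take the boundary connected sum of $W$ with $k$ copies of $(S^2 \times S^2) \times [0,1]$, producing an $h$-cobordism between $M \# \bighash{k}(S^2 \times S^2)$ and $N \# \bighash{k}(S^2 \times S^2)$ whose middle level has acquired $k$ pairs of geometrically dual $2$-spheres. Then one applies the Norman trick, tubing the spheres $B_i$ and $A_j$ over these new spheres to cancel the algebraically trivial excess intersection points. For $k$ sufficiently large each pair $(B_i,A_i)$ can be made transverse at a single point and the corresponding $2$/$3$-handle pair cancelled geometrically, trivializing the stabilized cobordism and yielding a diffeomorphism (respectively homeomorphism) of the stabilized boundaries. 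The hard part will be verifying that the Norman tubing genuinely suffices to clean up both the excess intersections and the attendant Whitney disks; this is exactly where the weaker "stable" conclusion buys room that a direct $h$-cobordism theorem in dimension $4$ does not have. The topological statement follows by running the same outline in the topological category.
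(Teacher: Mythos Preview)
Your argument has a genuine gap at the step where you claim the algebraic intersection matrix over $\Z[\pi_1(M)]$ ``can be arranged to be the identity after handle slides.'' Handle slides realise elementary row/column operations, and changing orientations or basepoint paths of handles realises multiplication by units of the form $\pm g$; together these generate precisely the trivial class in $\Wh(\pi_1(M))$. For a general $h$-cobordism the boundary matrix is merely invertible, and its class in $\Wh(\pi_1(M))$ is the Whitehead torsion $\tau(W;M)$, which need not vanish. When it does not, no sequence of handle slides will reduce the matrix to the identity, the off-diagonal intersections $A_j\cdot B_i$ are not algebraically zero, the Norman trick cannot remove them, and the stabilised cobordism cannot be made into a product. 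Your outline is essentially Quinn's theorem that $s$-cobordisms become products after stabilisation, and indeed it proves the theorem only for $s$-cobordant manifolds, not $h$-cobordant ones.

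The paper's proof avoids this problem by not attempting to trivialise the cobordism at all. Instead it looks at the middle level $M_{1/2}$ obtained after attaching just the $2$-handles to $M$. Since the inclusion $M\hookrightarrow W$ is $1$-connected, the attaching circles are null-homotopic in $M$, so $M_{1/2}\cong M\#\bighash{t_1}(S^2\times S^2)\#\bighash{t_2}(S^2\mathbin{\wt\times}S^2)$; a short spin argument shows one may take $t_2=0$. Turning the cobordism upside down gives the same conclusion with $N$ in place of $M$, so $M_{1/2}$ is simultaneously a stabilisation of each. No cancellation of handles, and hence no control on the torsion, is needed. A minor separate point: for topological $5$-dimensional cobordisms between $4$-manifolds the existence of handle decompositions is due to Quinn (and Freedman--Quinn), not Kirby--Siebenmann, whose handle theory requires one dimension more.
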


\begin{proof}
The proof is the same in both cases, by using the fact that $5$-dimensional topological cobordisms $(W;M,N)$ admit handle decompositions, i.e.\ $W$ can be built by attaching $5$-dimensional handles to $M\times [0,1]$ along topological embeddings of the attaching regions~\citelist{\cite{Quinn-annulus}*{Theorem~2.3.1}\cite{FQ}*{Theorem~9.1}}. In the case of a smooth $h$-cobordism, we get a smooth handle decomposition relative to $M$ by Morse theory.

In either case, we can perform handle trading to ensure that the handle decomposition has only $2$- and $3$-handles since the boundary inclusions $M\hookrightarrow W$ and $N\hookrightarrow W$ are $1$-connected. Consider the middle level $M_{1/2}$ of the cobordism, obtained after attaching $2$-handles to $M$. Since $W$ is an $h$-cobordism, the $2$-handles are attached along trivial circles, and so
\[
M_{1/2}\cong M \# \bighash{t_1} S^2\times S^2  \# \bighash{t_2} S^2 \mathbin{\wt{\times}} S^2,\]
where $t_1+t_2$ is the number of $2$-handles. Here $S^2\mathbin{\wt{\times}} S^2$ is the twisted $S^2$-bundle over $S^2$.

If $t_2=0$ we are done. In case $t_2\neq 0$, then there is an embedded $2$-sphere in $M_{1/2}$ with odd framing of its normal bundle. Via the homotopy equivalence $W\to M$, we see there is a map of a sphere to $M$ with odd framing of its normal bundle, implying that the universal cover of $M$ is non-spin. In this case we have that $M\# (S^2\mathbin{\wt{\times}} S^2)\cong M\# (S^2\times S^2)$ (see e.g.\ \cite{GompfStip}*{Exercise~5.2.6(b)}), and so we may assume $t_2=0$ again.

We have now argued that $M_{1/2}$ is a stabilisation of $M$. Applying the same argument to the upside down handlebody, we see further that $M_{1/2}$ is a stabilisation of $N$.
Thus $M$ and $N$ are stably homeomorphic or diffeomorphic, depending on whether the handle decomposition was smooth or merely topological to begin with.
\end{proof}

Next we discuss the $s$-cobordism theorem in dimension four. Below, we say a group is \emph{good} if it satisfies the $\pi_1$-null disc property~\cite{Freedman-Teichner:1995-1} (see also~\cite{Freedman-book-goodgroups}). We do not repeat the definition here.
In practice, it generally suffices to know that virtually solvable groups, and more generally groups of subexponential growth are good, and the class of good groups is closed under taking subgroups, quotients, extensions, and colimits~\cites{Freedman-Teichner:1995-1, Krushkal-Quinn:2000-1}.

\begin{theorem}[$s$-cobordism theorem]
	\label{thm:scob}
	Let $M$ be a topological $4$-manifold with $\pi:=\pi_1(M)$ a good group.
	\begin{enumerate}
		\item\label{it:s-cob1} Let $(W;M,M')$ be an $h$-cobordism over $M$. Then $W$ is trivial over $M$, i.e.\ $W\cong M\times[0,1]$, via a homeomorphism restricting to the identity on $M$, if and only if its Whitehead torsion $\tau(W;M)\in\Wh(\pi)$ vanishes.
		\item\label{it:s-cob2} For any $\varsigma\in\Wh(\pi)$ there exists an $h$-cobordism $(W;M,M')$ with $\tau(W;M)=\varsigma$.
		\item\label{it:s-cob3} The function assigning to an $h$-cobordism $(W;M,M')$ its Whitehead torsion $\tau(W;M)$ yields a bijection from the homeomorphism classes relative to
		$M$ of $h$-cobordisms over $M$ to the Whitehead group $\Wh(\pi)$.
	\end{enumerate}
\end{theorem}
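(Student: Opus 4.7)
The plan is to adapt the classical proof of the $s$-cobordism theorem in dimensions at least five, due to Barden, Mazur, and Stallings in the smooth category and to Kirby-Siebenmann in the topological category, to the four-dimensional setting. The only substantive modification is that the Whitney trick, which in higher dimensions is supplied by general position, must be implemented via Freedman's topological disc embedding theorem; this is the single place where the good group hypothesis on $\pi := \pi_1(M)$ enters. I would prove part~(2) first, then the nontrivial direction of~(1), and deduce~(3) from both.

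For the realisation statement~(2), given $\varsigma \in \Wh(\pi)$ represented by an invertible matrix $A \in \GL_k(\Z[\pi])$, I would attach $k$ trivially cancelling $2$-/$3$-handle pairs to $M \times [0,1]$, obtaining a product cobordism. A sequence of handle slides of the $3$-handles over the $2$-handles then modifies the based cellular chain complex of the universal cover so that the boundary matrix becomes $A$. The result is an $h$-cobordism relative to $M$ whose Whitehead torsion is $\varsigma$.

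For part~(1), the content is that $\tau(W;M) = 0$ implies $W \cong M \times [0,1]$ relative to $M$. Using topological handle decompositions from \cite{Quinn-annulus}*{Theorem~2.3.1} as in the proof of \cref{thm:h-cob-implies-stably-diff-homeo}, I would present $W$ relative to $M$ and perform handle trading to eliminate $0$-, $1$-, $4$-, and $5$-handles, using that $M \hookrightarrow W$ and $M' \hookrightarrow W$ are $1$-connected. The chain complex $C_*(\wt{W}, \wt{M})$ is then a based acyclic two-term complex over $\Z[\pi]$ with vanishing Whitehead torsion, so after stabilising by trivial $2$-/$3$-handle pairs the boundary matrix can be reduced to the identity by elementary row and column operations, corresponding geometrically to handle slides, basis reorderings, and sign changes. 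After these moves, each $2$-handle algebraically cancels a unique $3$-handle. The crucial remaining step is to realise each algebraic cancellation geometrically: in the $4$-dimensional middle level, the belt sphere of a $2$-handle and the attaching sphere of its partner $3$-handle are immersed $2$-spheres with algebraic intersection $\pm 1$, and excess transverse intersections occur in oppositely signed pairs. I would remove each such pair by a Whitney move, the required embedded Whitney discs being supplied by Freedman's disc embedding theorem because $\pi$ is good. After all handles cancel, $W \cong M \times [0,1]$ relative to $M$.

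For part~(3), the Whitehead-torsion map is well defined by topological invariance of torsion and is surjective by~(2). For injectivity, given $h$-cobordisms $W_1, W_2$ relative to $M$ with equal torsion, I would apply~(2) to produce an inverse $h$-cobordism $W_2^{-1}$ to $W_2$, use additivity of torsion under stacking to see that $W_2^{-1} \cup W_1$ has zero torsion, and then invoke~(1) to identify this stacked cobordism with a product, finally concluding $W_1 \cong W_2$ relative to $M$. The main obstacle throughout is the geometric cancellation step in part~(1): embedding Whitney discs in a $4$-manifold is precisely the content of Freedman's disc embedding theorem, and it is what forces the good group hypothesis.
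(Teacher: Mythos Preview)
Your outlines for parts~(1) and~(3) are essentially correct and match the paper's argument; the paper simply cites \cite{FQ}*{Theorem~7.1A} for~(1), and the stacking argument for~(3) is identical. The gap is in part~(2). Starting from $k$ trivially cancelling $2$-/$3$-handle pairs, the boundary matrix $C_3(\wt W,\wt M)\to C_2(\wt W,\wt M)$ is the identity. Handle slides of $3$-handles over $3$-handles (resp.\ $2$-handles over $2$-handles) realise elementary row (resp.\ column) operations, and reorderings and orientation changes realise signed permutations; so the matrices reachable from the identity by handle slides lie in the subgroup generated by $\E(\Z\pi)$ and trivial units. By definition these all represent $0\in\Wh(\pi)$. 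Thus your procedure can only realise the trivial torsion, not an arbitrary $\varsigma$. (``Sliding a $3$-handle over a $2$-handle'' is just an isotopy of the attaching $2$-sphere in the middle level and cannot change its homotopy class.)

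The paper's proof of~(2) instead attaches $k$ trivial $2$-handles to reach $M\#\bighash{k}(S^2\times S^2)$, realises the columns of $A$ (and of $(\ol A^T)^{-1}$) by tubing parallel copies of the $\{s_0\}\times S^2$ factors, and then invokes the sphere embedding theorem \cite{FQ}*{Theorem~5.1B} to replace the resulting spheres by topologically flat embedded spheres with geometrically dual spheres. Only then are $3$-handles attached. Two points deserve emphasis: first, embedding these $2$-spheres in a $4$-manifold is exactly where the good group hypothesis enters for~(2), not only for~(1); second, the geometrically dual spheres are essential to guarantee $\pi_1(M')\cong\pi_1(W)$, i.e.\ that $W$ is actually an $h$-cobordism. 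The paper's Remark immediately following the theorem warns that an earlier proof in the literature neglected this last issue.
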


\begin{remark}
It was asserted in \cite{Rourke-Sanderson}*{p.\ 90} that \cref{thm:scob}\,\eqref{it:s-cob2} holds in dimension 4 in the piecewise-linear category, and without the assumption on $\pi_1(M)$. But the proof there does not take into account the need for geometrically dual spheres to control the fundamental group of $M'$.
\end{remark}

\begin{proof}[Proof of \cref{thm:scob}]
	The statement~\eqref{it:s-cob1} is \cite{FQ}*{Theorem~7.1A}, and relies on Freedman's disc embedding theorem~\cite{F} (see also~\cites{FQ,Freedman-notes}). We prove \eqref{it:s-cob2}, which follows the high dimensional argument from \cite{Milnor-whitehead-torsion}*{Theorem~11.1}.

Let $A\in \GL_k(\Z\pi)$ represent $\varsigma$. Attach $k$ trivial $2$-handles to $M\times \{1\}\subseteq M\times [0,1]$. This yields a bordism $W'$ from $M$ to $M \# \bighash{k} S^2\times S^2$, where we assume that the ascending spheres of the $2$-handles are $S^2\times\{s_0\}$ in each copy of $S^2\times S^2$. For every $1\leq i\leq k$, we can realize the element $(0,(0,A_{ij})_{j=1}^k)\in \pi_2(M)\oplus\bigoplus_{j=1}^k(\Z\pi)^2\cong \pi_2(M \# \bighash{k} S^2\times S^2)$ by an embedded framed sphere by tubing together parallel copies of the embedded framed spheres $\{s_0\}\times S^2$ in $S^2\times S^2$. Let $\{f_i\}_{i=1}^k$ be the resulting collection of spheres.  These spheres admit pairwise disjoint algebraically dual spheres $\{g_i\}_{i=1}^k$, obtained by tubing together parallel copies of the embedded framed spheres $S^2 \times \{s_0\}$ in order to realise the rows of the matrix $(\ol{A}^T)^{-1}$.  By the sphere embedding theorem~\citelist{\cite{FQ}*{Theorem~5.1B}\cite{PRT20}*{Theorem~B}} there is a collection $\{\ol{f}_i\}_{i=1}^k$ of topologically flat embedded spheres with $\ol{f}_i$ homotopic to $f_i$ and such that the collection $\{\ol{f}_i\}_{i=1}^k$ admits a geometrically dual collection of (immersed) spheres $\{\ol{g}_i\}_{i=1}^k$.

Attach $3$-handles to $W'$ along neighbourhoods of the spheres $\{\ol{f}_i\}_{i=1}^k$ to obtain a bordism $W$ from $M$ to $M'$. Since we attached the handles of index 2 along trivial circles, $\pi_1(M)\cong \pi_1(W)$.  Since the $\{\ol{f}_i\}$ have geometrically dual spheres, surgery along them does not change the fundamental group and so we have $\pi_1(M')\cong \pi_1(W)$.
The handle chain complex $C_*(\wt{W},\wt{N})$ is \[0 \to C_3(\wt{W},\wt{M}) \cong \oplus^k \Z\pi \xrightarrow{A} C_2(\wt{W},\wt{M}) \cong \oplus^k \Z\pi \to 0.\]
Since $A$ is invertible, $H_*(\wt{W},\wt{M})=0$, and then by duality $H_*(\wt{W},\wt{M'})=0$ too. Therefore $W$ is an $h$-cobordism.
The torsion can be read off from the handle chain complex as $\tau(W;M) = [A] = \varsigma$.

Finally, \eqref{it:s-cob3} is a consequence of~\eqref{it:s-cob1} and~\eqref{it:s-cob2}. Surjectivity is immediate from~\eqref{it:s-cob2}. The proof of injectivity follows~\cite{Milnor-whitehead-torsion}*{Theorem~11.3} and uses~\eqref{it:s-cob1} and~\eqref{it:s-cob2}. Let $(W;M,M')$ and $(W';M,M'')$ be $h$-cobordisms over $M$ with torsion $\varsigma$. By~\eqref{it:s-cob2}, there is a 4-manifold $N$ and an $h$-cobordism $(W'';M',N)$ with torsion $-\varsigma$. By the additivity of Whitehead torsion~\cite{lueck-surgery-intro}, $(W\cup_{M'}W'',M,N)$ is an $s$-cobordism. Since $\pi$ is good, $W\cup_{M'}W''$ is homeomorphic to $M\times [0,1]$ relative to $M$ by~\eqref{it:s-cob1}.  In particular $N$ is homeomorphic to $M$. We can thus form $(W'' \cup_{M} W';M',M'')$, which again is an $s$-cobordism and is thus homeomorphic to $M'\times [0,1]$, relative to $M'$ by \eqref{it:s-cob1}. We obtain a homeomorphism
\[W\cong W\cup_{M'}(M'\times[0,1])\cong W\cup_{M'}W''\cup_{M}W'\cong (M\times [0,1])\cup_{M'}W'\cong W',\]
relative to $M$, as claimed.
\end{proof}

\begin{remark}
Every $5$-dimensional $s$-cobordism $(W;M,N)$, with no restriction on fundamental groups, becomes homeomorphic to a product, relative to $M$, after sufficiently many connected sums with $S^2\times S^2\times [0,1]$ along arcs joining $M$ and $N$~\cite{quinn-1983}*{Theorem~1.1}. This gives another proof of the special case of \cref{thm:h-cob-implies-stably-diff-homeo}  that $s$-cobordant $4$-manifolds are stably diffeomorphic/homeomorphic.
\end{remark}

The last remaining implication in \cref{fig:flowchart} is given by the following result of Gompf.

\begin{theorem}[\cite{Gompf-stable}]\label{thm:gompf-stable}
Smooth, orientable $4$-manifolds that are stably homeomorphic are also stably diffeomorphic.
\end{theorem}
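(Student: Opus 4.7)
The plan is to reduce the theorem to Gompf's unstabilised result from the same paper: if $M_0$ and $N_0$ are homeomorphic smooth orientable closed $4$-manifolds, then $M_0 \# \bighash{k}(S^2\times S^2)$ is diffeomorphic to $N_0 \# \bighash{k}(S^2\times S^2)$ for some integer $k\geq 0$. Granted this, suppose $M$ and $N$ are smooth, orientable, and stably homeomorphic, so that there exist integers $s,t$ with
\[
M' := M \# \bighash{s}(S^2\times S^2)\homeo N \# \bighash{t}(S^2\times S^2) =: N'.
\]
Both $M'$ and $N'$ are smooth and orientable. Applying the unstabilised claim to the pair $(M', N')$ yields some $k$ with $M' \# \bighash{k}(S^2\times S^2) \diffeo N' \# \bighash{k}(S^2\times S^2)$, whence $M$ and $N$ are stably diffeomorphic.

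For the unstabilised claim, I would begin with a homeomorphism $\phi\colon M_0 \homeo N_0$ and form the mapping cylinder $W_\phi = (M_0\times [0,1])\cup_\phi N_0$, a compact topological $5$-manifold with boundary $M_0\sqcup N_0$. Since $\phi$ is a homeomorphism, both boundary inclusions are simple homotopy equivalences and the associated Whitehead torsion vanishes, so $W_\phi$ is a topological $s$-cobordism. Using the existence of topological handle decompositions in dimension five~\citelist{\cite{Quinn-annulus}*{Theorem~2.3.1}\cite{FQ}*{Theorem~9.1}} together with handle trading (exactly as in the proof of~\cref{thm:h-cob-implies-stably-diff-homeo}), one may arrange a topological handle decomposition of $W_\phi$ relative to $M_0$ with only $2$- and $3$-handles.

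The key and hardest step is then to upgrade this topological handle structure to a smooth one, after a suitable number of boundary stabilisations by $S^2\times S^2$. The relevant obstructions live in cohomology of $W_\phi$ with coefficients in the homotopy groups of $\TOP/\mathrm{O}$, and can be analysed handle by handle using smoothing theory together with Freedman's disc embedding theorem~\cite{F}, which allows one to realise each attaching circle by a smoothly embedded disc once enough algebraically dual spheres are available after stabilisation. Orientability is essential at this step: it ensures that the relevant stabilising spheres have the right normal bundle behaviour to absorb the obstructions, and indeed the nonorientable examples of Kreck, Cappell--Shaneson, and Akbulut cited in the introduction demonstrate that the conclusion genuinely fails without it. Once $W_\phi$ has been smoothed in this way, it becomes a smooth $h$-cobordism between $M_0 \# \bighash{k}(S^2\times S^2)$ and $N_0 \# \bighash{k}(S^2\times S^2)$, and~\cref{thm:h-cob-implies-stably-diff-homeo} (Wall--Lawson) then delivers the desired stable diffeomorphism.
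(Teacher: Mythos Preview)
The paper does not prove this theorem; it merely states it with a citation to~\cite{Gompf-stable}. So there is no proof in the paper against which to compare your proposal.

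Assessing the proposal on its own merits: the reduction in your first paragraph is clean and correct, and the overall strategy---manufacture a smooth $h$-cobordism between $M_0$ and $N_0$, then invoke~\cref{thm:h-cob-implies-stably-diff-homeo}---is indeed how Gompf proceeds. However, your third paragraph, where the real content lies, is confused in a way that constitutes a genuine gap. Freedman's disc embedding theorem produces \emph{topological} locally flat embeddings of $2$-discs in $4$-manifolds; it cannot help you smooth anything, and invoking it for a smoothing problem is a category error. Moreover, the attaching circles of $5$-dimensional $2$-handles are $1$-dimensional submanifolds of a smooth $4$-manifold and embed smoothly for elementary reasons, so ``realising each attaching circle by a smoothly embedded disc'' does not parse. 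Your account of the role of orientability (``normal bundle behaviour of stabilising spheres'') is also not what is going on. Gompf's actual argument bypasses handle-by-handle analysis: since $\ks(M_0)=0$, the $5$-manifold $M_0\times\R$ is smoothable, and high-dimensional smoothing theory (Kirby--Siebenmann) classifies its smooth structures. Gompf shows that in the orientable case the two smooth structures induced on $M_0\times\R$ by the two boundary smoothings are concordant, yielding a smooth $s$-cobordism directly; orientability enters through the identification of the relevant obstruction, not through dual spheres or stabilisation.
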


In the same paper Gompf also showed that smoothings of a nonorientable $4$-manifold become diffeomorphic after connected sum with sufficiently many copies of $S^2\mathbin{\wt{\times}}S^2$.

\section{Review of surgery exact sequences}\label{Section:review-surgery}\label{sec:surgery}

In the next section we will appeal on several occasions to surgery exact sequences. We refer to \cite{Freedman-book-surgery} for an account of the 4-dimensional case, and e.g.\  \cite{Wall-surgery-book} and \cite{CLM} for detailed treatments of general surgery theory in dimensions at least $5$.

The surgery exact sequences are centred on the \emph{structure sets.}  For the remainder of this section, let $M$ be a closed, connected, topological 4-manifold.

\begin{definition}
The \emph{homotopy structure set} of $M$, denoted $\mathcal{S}^h(M)$, is by definition the set of pairs $(N,f \colon N \xrightarrow{\simeq} M)$, where $N$ is a closed topological 4-manifold and $f$ is a homotopy equivalence, considered up to $h$-cobordism over $M$. That is, $[(N,f)] =[(N,f')] \in \mathcal{S}^h(M)$ if and only if there is an $h$-cobordism $(W;N,N')$, with inclusion maps $i \colon N \to W$ and $i' \colon N' \to W$, together with a map $F \colon W \to M$ such that $F \circ i = f$ and $F \circ i' =f'$.
\end{definition}

Note that $F$ is necessarily also a homotopy equivalence in the definition above.

\begin{definition}
The \emph{simple structure set} of $M$, denoted $\mathcal{S}^s(M)$, is by definition the set of pairs $(N,f \colon N \xrightarrow{\simeq_s} M)$, where $N$ is a closed topological 4-manifold and $f$ is a simple homotopy equivalence, considered up to $s$-cobordism over $M$. That is, $[(N,f)] =[(N',f')] \in \mathcal{S}^s(M)$ if and only if there is an $s$-cobordism $(W;N,N')$, with inclusion maps $i \colon N \to W$ and $i' \colon N' \to W$, together with a map $F \colon W \to M$ such that $F \circ i = f$ and $F \circ i' =f'$.
\end{definition}

Note that $F$ is necessarily a simple homotopy equivalence.
Suppose that $\pi_1(M)$ is a good group. Then every $s$-cobordism is a product, and we can alternatively describe the equivalence relation in the definition of the simple structure set without reference to $s$-cobordisms, by instead requiring a homeomorphism $G \colon N \to N'$ such that there is a homotopy $f' \circ G \sim f \colon N\to M$.

Continuing with the assumption that $\pi_1(M)$ is good, it follows that one approach to the classification of manifolds simple homotopy equivalent to~$M$, up to homeomorphism, is to first compute the simple structure set of $M$, and then to compute the set of orbits of the post-composition action on it by the group $\hAut^s(M)$ of homotopy classes of simple self-homotopy equivalences of $M$. The set of orbits is then the set of homeomorphism classes $\mathcal{M}(M)$ of manifolds simple homotopy equivalent to $M$.

Similarly, the set $\mathcal{M}^h(M)$ of $h$-cobordism classes of closed 4-manifolds homotopy equivalent to $M$ is in bijective correspondence with the orbits of $\mathcal{S}^h(M)$ under the action of the group $\hAut(M)$ of homotopy classes of self-homotopy equivalences of~$M$.

When $\pi_1(M)$ is good, the surgery sequences are exact sequences of abelian groups, whose underlying sets are given as follows; the group structures arise via the theory of spectra and are hard to define geometrically~\cite{nicas-memoir}*{Chapter~5}. The identity element of the structure sets is given by the identity map $M\to M$.
We will explain the terms other than the structure sets after stating the sequences. Let $\pi := \pi_1(M)$ and let $w \colon \pi \to C_2$ be the orientation character of~$M$. Assume that $\pi$ is a good group.  For the homotopy structure set, we have an exact sequence:
\[
\begin{tikzcd}
\mathcal{N}(M \times [0,1],M \times \{0,1\})\arrow[r,"\sigma"]	&L_5^h(\Z\pi,w)\arrow[r,"W"]	&\mathcal{S}^h(M)\arrow[r,"\eta"]	&\mathcal{N}(M)\arrow[r,"\sigma"]	&L_4^h(\Z\pi,w),
\end{tikzcd}
\]
while for the simple structure set we have an exact sequence:
\[
\begin{tikzcd}
\mathcal{N}(M \times [0,1],M \times \{0,1\})\arrow[r,"\sigma"]	&L_5^s(\Z\pi,w)\arrow[r,"W"]	&\mathcal{S}^s(M)\arrow[r,"\eta"]	&\mathcal{N}(M)\arrow[r,"\sigma"]	&L_4^s(\Z\pi,w),
\end{tikzcd}
\]

The \emph{degree one normal maps}, the terms involving $\mathcal{N}$, are independent of the $h$ and $s$ decorations.  For $(X,\partial X)$ equal to either $(M,\emptyset)$ or $(M \times [0,1],M \times \{0,1\})$, the set $\mathcal{N}(X,\partial X)$ consists of the set of manifolds $(N,\partial N)$ with a degree one map $N \to X$ that restricts to a homeomorphism on $\partial N \to \partial X$, together with some normal bundle data that we will not define here, up to an analogous notion of degree one normal bordism. Details can be found in the references provided at the start of this section. It will suffice for us to know that $\cN(M)\cong [M, \G/\TOP]$ and $\cN(M\times [0,1],M\times \{0,1\})\cong [(M\times [0,1],M\times \{0,1\}), (G/\TOP, *)]$. For us the only relevant property of the space $\G/\TOP$ will be the existence of a $5$-connected map $\G/\TOP \to K(\Z,4) \times K(\Z/2,2)$~\citelist{\cite{Madsen-Milgram}\cite{Kirby-Taylor}*{p.\ 397}}, so in particular
\begin{equation}\label{eq:cohomology-normalinvariants}
\cN(M)\cong H^4(M;\Z) \oplus H^2(M;\Z/2).
\end{equation}
Similarly, for $Y$ a closed $3$-manifold, we have $[Y, \G/\TOP]\cong H^2(Y;\Z/2)$. As before the identity element in the set of normal maps is given by the identity map.

The \emph{$L$-groups} $L_5^h(\Z\pi,w)$, $L_5^s(\Z\pi,w)$, $L_4^h(\Z\pi,w)$, and $L_4^s(\Z\pi,w)$ have purely algebraic definitions, in terms of $\pi$, $w$, and the decoration $h$ or $s$. We will also not go into the details of the definitions here, but will give a brief overview.

Roughly speaking, $L_4^h(\Z\pi,w)$ is defined in terms of nonsingular, sesquilinear, hermitian forms on free $\Z\pi$-modules. For $L_4^s(\Z\pi,w)$ the $\Z\pi$-modules must be based, and certain isomorphisms between based modules are required to be simple, meaning that they represent the trivial element of the Whitehead group $\Wh(\pi)$. The identity element in the $L_4$ groups corresponds to the \emph{hyperbolic form}, i.e.\  $\bigoplus^k \bsm 0 & 1\\ 1 & 0\esm$ on $\oplus^{2k} \Z\pi$ for some $k$.  There is also a version $L_4^p(\Z\pi,w)$, where the underlying $\Z\pi$-modules are only required to be projective.

Elements of the $L_5$ groups consist of a hyperbolic form on a free $\Z\pi$-module equipped with a choice of a half rank summand of the base module, called a \emph{lagrangian}, on which the form vanishes. For the $s$ decorations,  we need the module to be based and a certain short exact sequence related to the lagrangian to have trivial Whitehead torsion. The identity element of the $L_5$-groups consists of a hyperbolic form where the lagrangian is standard.
When the orientation character $w$ is trivial, we often suppress $w$ from the notation of $L$-groups.

The homomorphisms $\sigma$ in the surgery sequences from degree one normal maps to the $L$-groups are called the \emph{surgery obstruction maps}. Given a degree one normal map $f\colon N\to M$, performing surgery on circles produces a map $f'$, still with target $M$, that induces an isomorphism on fundamental groups. The element $\sigma(f)$ is given by the kernel of the map induced by $f'$ on second homotopy groups, called the \emph{surgery kernel}. Exactness at $\cN(M)$ requires that $\pi_1(M)$ is good and relies on \cites{F,FQ}.

Note that $L_4(\Z)\cong \Z$, given by the signature of the form divided by $8$. Hence $\sigma\colon \cN(S^4)\to L_4(\Z)$ sends $[N,f]$ to $\sign(N)/8$. Using the naturality of the surgery exact sequence and that $L_4(\Z\pi)$ contains $L_4(\Z)$ as a direct summand, we see that the summand $H^4(M;\Z)\cong \Z$ in $\cN(M)$ is detected by the signature difference $[N,f]\mapsto (\sigma(N)-\sigma(M))/8$  and maps injectively to $L_4(\Z\pi)$. The preceding argument applies to $L_4(\Z\pi)$ with both $h$ and $s$ decorations.

The maps marked $W$ in the surgery sequences are given by the \emph{Wall realisation} actions of the $L_5$ groups on the structure sets, which we sketch next. Let $h\colon N\to M$ be a (simple) homotopy equivalence and let $\alpha$ be an element of the relevant $L_5$ group. Stabilising gives a map $N \# \bighash{k} S^2\times S^2 \to M$, for some $k$, whose surgery kernel gives a hyperbolic form. We then represent the generators of the lagrangian in $\alpha$ by framed, disjointly embedded $2$-spheres in $N \# \bighash{k} S^2\times S^2$, on which we perform surgery. The resulting $4$-manifold $N'$ comes equipped with a (simple) homotopy equivalence $h'\colon N'\to M$ and $(N',h')$ is by definition the element $\alpha\cdot h$ of the structure set. By construction $N$ and $N'$ are stably homeomorphic. It is highly nontrivial to represent the lagrangian by disjointly embedded spheres, and requires the work of \cites{F,FQ} and the restriction to good fundamental groups.

\section{Counterexamples}\label{section:counter-examples}

\cref{fig:flowchart-counterexamples} shows what we know about the converses of the implications in \cref{fig:flowchart}. In this section we collect the counterexamples indicated in \cref{fig:flowchart-counterexamples}, explaining their construction and properties. The properties are also collected in~\cref{table} at the end of the paper.

\begin{figure}[htb]
\includegraphics[width=15cm]{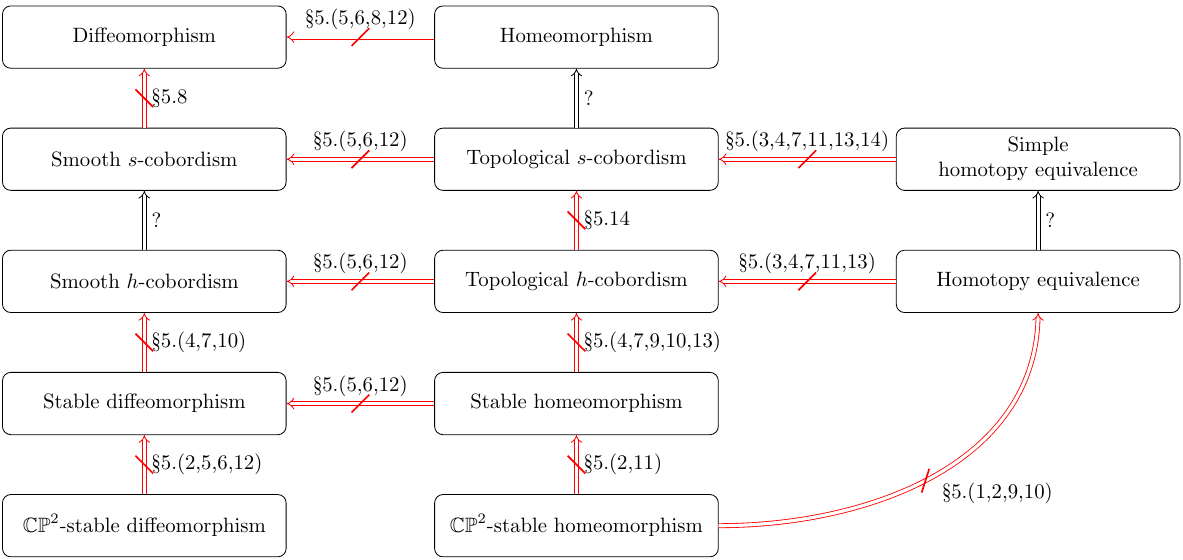}
\caption{What is known about the converses of the implications in \cref{fig:flowchart}. The symbol  \red{$\nRightarrow$} denotes the cases where we know an implication does not hold, indicating which subsections contain corresponding counterexamples. The $\xRightarrow{?}$ symbol denotes the three cases where it is unknown whether an implication holds.
}
\label{fig:flowchart-counterexamples}
\end{figure}

\subsection{\texorpdfstring{$S^2 \times S^2$ and $S^4$}{S2xS2 and S4}}\label{subsection:row-1}

\begin{itemize}
  \item Both manifolds are smooth, orientable, and simply connected.
\item As $S^2\times S^2 \cong S^4\# (S^2\times S^2)$, they are stably diffeomorphic, and therefore stably homeomorphic and $\CP^2$-stably diffeomorphic.
\item $\chi(S^2\times S^2)=4\neq 2=\chi(S^4)$. Therefore, they are neither (simple) homotopy equivalent, $s$- nor $h$-cobordant (in either category), homeomorphic, nor  diffeomorphic.
\end{itemize}

This straightforward example shows that to meaningfully ask for homotopically inequivalent 4-manifolds that are stably diffeomorphic, one should also require that the Euler characteristics coincide. Equivalently, one should  require that the number of copies of $S^2 \times S^2$ added is the same for both manifolds, that is $s=t$ in \cref{defn:stab-diff}.

\subsection{\texorpdfstring{$S^2 \times S^2$ and $S^2 \mathbin{\wt{\times}} S^2$}{S2xS2 and S2x~S2}}\label{subsection:row-2}

As before $S^2 \mathbin{\wt{\times}} S^2$ denotes the nontrivial $S^2$-bundle over $S^2$. It can be constructed by gluing two copies of $D^2 \times S^2$ together using the Gluck twist on their common boundary $S^1 \times S^2$.  Alternatively, recall that oriented $3$-plane bundles over $S^2$ are classified up to isomorphism by homotopy classes of maps $[S^2,BSO(3)] \cong [S^1,SO(3)]$, and there are two such homotopy classes. The nontrivial map gives a $3$-plane bundle whose sphere bundle is $S^2 \wt{\times} S^2$.

\begin{itemize}
  \item Both $S^2\times S^2$ and $S^2\mathbin{\wt{\times}} S^2$ are smooth, orientable, and simply connected.
  \item $\chi(S^2\times S^2)=4=\chi(S^2\mathbin{\wt{\times}} S^2)$.
  \item By the diffeomorphisms $(S^2 \times S^2) \# \CP^2 \cong \CP^2 \# \ol{\CP}^2 \# \CP^2 \cong (S^2 \mathbin{\wt{\times}} S^2)  \# \CP^2$, the manifolds are $\CP^2$-stably diffeomorphic and $\CP^2$-stably homeomorphic.
  \item The second Stiefel-Whitney classes are distinct, since $S^2 \times S^2$ is spin but $S^2 \mathbin{\wt{\times}} S^2$ is not.  So they are neither (stably) diffeomorphic, nor (stably) homeomorphic, nor (simple) homotopy equivalent, $s$- nor $h$-cobordant (in either category).
\end{itemize}

This example shows that it is easy to find manifolds that are $\CP^2$-stably homeomorphic or $\CP^2$-stably diffeomorphic but do not satisfy any of the other equivalence relations.

\subsection{\texorpdfstring{$\CP^2$ and Freedman's $\star\CP^2$}{CP2 and Freedman's *CP2}}\label{subsection:row-3}

Freedman~\cite{F}*{p.~370} constructed the manifold $\star\CP^2$, which he called the \emph{Chern manifold}, as follows. Attach a $2$-handle to $D^4$ along a $+1$-framed trefoil $K$, to obtain the \emph{$1$-trace} of the trefoil. The boundary is a homology sphere $\Sigma$, which bounds a compact, contractible manifold $C$~\citelist{\cite{F}*{Theorem~1.4'}\cite{FQ}*{Corollary~9.3C}}. Cap off the $1$-trace with $C$. The resulting closed $4$-manifold is $\star\CP^2$. The same construction with any knot $K$ with $\Arf(K)=1$ gives rise to a homeomorphic manifold.

\begin{itemize}
  \item  The manifolds $\CP^2$ and $\star\CP^2$ are orientable and simply connected.
  \item $\CP^2$ is smooth. The Kirby-Siebenmann invariant $\ks(\star\CP^2)=\ks(C)=\mu(\Sigma)=\Arf(K)=1$, where $\mu(\Sigma)$ is the Rochlin invariant of~$\Sigma$, by \citelist{\cite{FQ}*{page 165} \cite{Gonzalez-Acuna}}. Therefore, $\star\CP^2$ is not smoothable, even stably.
 \item Since the manifolds have isometric intersection forms, they are homotopy equivalent~\cites{Whitehead-4-complexes,Milnor-simply-connected-4-manifolds} and have equal Euler characteristic $\chi(\CP^2)=3=\chi(\star\CP^2)$. Since the Whitehead group of the trivial group is trivial, the manifolds are also simple homotopy equivalent.
 \item Since the Kirby-Siebenmann invariants are different, they are not $\CP^2$-stably homeomorphic and therefore not homeomorphic, not $s$- or $h$-cobordant, and not stably homeomorphic.
 \item The smooth questions are not applicable to this pair.
\end{itemize}

This example shows that one must restrict to manifolds with the same Kirby-Siebenmann invariant, and moreover ideally smooth manifolds, to find really interesting examples of homotopy equivalent but not homeomorphic manifolds.

\subsection{\texorpdfstring{$\RP^4 \# \CP^2$ and $\mathcal{R} \# \star\CP^2$}{RP4+CP2 and R+*CP2}}\label{subsection:row-4}

Here $\star\CP^2$ is the Chern manifold from \cref{subsection:row-3}.
The manifold~$\mathcal{R}$ was first constructed by Ruberman~\cite{ruberman84}, as follows. By~\citelist{\cite{F}*{Theorem~1.4'}\cite{FQ}*{Corollary~9.3C}}, the Brieskorn sphere $\Sigma(5,9,13)$ bounds a compact, contractible, topological $4$-manifold~$\mathcal{U}$.   As a Seifert fibered manifold, $\Sigma(5,9,13)$ admits an orientation preserving order two self-diffeomorphism $t$ given by the antipodal map on the generic fibres. Since the parameters $5$, $9$, and $13$ are all odd, $t$ is also the antipodal map on the exceptional fibres and $t$ has no fixed points.
The manifold $\mathcal{R}$ is then defined as
\[
\mathcal{R}:=\mathcal{U}/ x\sim t(x) \text{ for }x\in \partial \mathcal{U}.
\]
A similar construction was previously used by Fintushel-Stern~\cite{FS-fake} to construct a manifold homeomorphic, but not diffeomorphic to $\RP^4$, as we describe in \cref{subsection:row-6}. Ruberman's proof that $\mathcal{R}$ is not homeomorphic to $\RP^4$ utilises Rochlin's theorem~\cite{Rochlin} (see also~\citelist{\cite{freedman-kirby}\cite{Kirby_lecturenotes}*{Chapter~XI}}) and the fact that the Rochlin invariant $\mu(\Sigma(5,9,13))=1$. That $\mathcal{R}$ is a homotopy $\RP^4$ follows from the same principle as in~\cite{FS-fake}. Namely, write $S^4$ as $\mathcal{U}\cup_{t} \mathcal{U}$, and observe that $\mathcal{R}$ is the quotient under a free involution. The same construction can be applied to any integer homology sphere $\Sigma$ admitting a free orientation preserving involution and with $\mu(\Sigma)=1$. While it is \textit{a priori} not clear that the outcome is unique up to homeomorphism, this follows from the classification of closed, non-orientable $4$-manifolds with order two fundamental group~\cite{HKT-nonorientable}*{Theorem~3}.

In the literature $\mathcal{R}$ is sometimes denoted $\star\RP^4$. We prefer not to use this notation to avoid confusion with the star construction, defined in \cref{sec:E-and-friends}.  The manifold $\mathcal{R}$ also arises via a surgery construction, which we outline after the following list.

\begin{itemize}
 \item  The manifolds $\RP^4 \# \CP^2$ and $\mathcal{R} \# \star\CP^2$ are nonorientable with nontrivial fundamental group isomorphic to $\Z/2$.
  \item $\RP^4 \# \CP^2$ is smooth by construction. Ruberman-Stern~\cite{Ruberman-Stern} showed that $\mathcal{R} \# \star\CP^2$ is also smoothable, as follows. First they showed that there exists a knot $K$ with $S^3_1(K)=\partial X_1(K)=\Sigma(5,9,13)$ a Brieskorn sphere, via an explicit Kirby calculus argument. Let $t$ be a free orientation preserving involution of $\Sigma(5,9,13)$ as described at the beginning of this section. Construct the smooth $4$-manifold
  \[
  X:= X_1(K)/x\sim t(x) \text{ for }x\in \partial X_1(K).
  \]
  To see that $X$ is homeomorphic to $\mathcal{R} \# \star\CP^2$, let $\mathcal{U}_K$ be the compact, contractible, topological $4$-manifold with boundary $\Sigma(5,9,13)$ provided by~\citelist{\cite{F}*{Theorem~1.4'}\cite{FQ}*{Corollary~9.3C}}. By Freedman \cite{F} and Boyer~\cite{Boyer1}, there is a homeomorphism
  \[\star\CP^2\#\mathcal{U}_K\cong (X_1(K)\cup_{\Sigma(5,9,13)} \mathcal{U}_K)\# \mathcal{U}_K\cong X_1(K),\]
since they are both compact, simply connected $4$-manifolds with the same intersection form and the same integer homology sphere boundary. This homeomorphism descends to a homeomorphism $\star\CP^2\#\mathcal R\cong X$ when quotienting the boundaries by the involution. Alternatively, one can apply the classification theorem of~\cite{HKT-nonorientable} to show that $X$ is homeomorphic to $\mathcal{R}\# \star\CP^2$.
  Different choices of $K$, and thereby the Brieskorn sphere, might give rise to different smooth structures on $\mathcal{R} \# \star\CP^2$, but this is currently an open question.
   \item The manifolds $\RP^4 \# \CP^2$ and $\mathcal{R} \# \star\CP^2$ are homotopy equivalent, and therefore have equal Euler characteristic $\chi(\RP^4 \# \CP^2) = 2 = \chi(\mathcal{R} \# \star\CP^2)$. Since the Whitehead group of $\Z/2$ is trivial, they are also simple homotopy equivalent.
\item Since they are homotopy equivalent and smoothable, they are $\CP^2$-stably homeomorphic and $\CP^2$-stably diffeomorphic.
  \item Hambleton-Kreck-Teichner~\cite{HKT-nonorientable} showed that they are stably homeomorphic, but not homeomorphic.
  \item The manifolds are stably diffeomorphic by~\cite{guide}*{Theorem~12.3}, which states that smooth, nonorientable, compact 4-manifolds with universal cover non-spin, that are stably homeomorphic, are stably diffeomorphic. 
    \item  They are not $s$-cobordant, in either category, since if they were they would be homeomorphic by the topological $s$-cobordism theorem (\cref{thm:scob}). Since the Whitehead group of $\Z/2$ is trivial, they are also not $h$-cobordant in either category.
\end{itemize}

We end this section by giving an alternative construction of the manifold $\mathcal{R}$ following Hambleton-Kreck-Teichner~\cite{HKT-nonorientable}*{pp.~650-1}.
There is a degree one normal map, namely the collapse map $E_8 \to S^4$, with domain the $E_8$ manifold constructed by Freedman~\cite{F}*{Theorem~1.7}.  Connect sum with $\RP^4$ in both domain and codomain to obtain a degree one normal map
\[F \colon \RP^4 \# E_8 \to \RP^4.\]
The surgery kernel is the image of the $E_8$ form under the map $L_4(\Z) \to L_4(\Z[\Z/2],w)$, where $w \colon \Z/2 \to C_2$ is the nontrivial character.  But this is the zero map \cite{Wall-surgery-book}*{Chapter~13A, bottom of page 173}.
By the definition of $L_4(\Z[\Z/2],w)$, this means that, perhaps after stabilising with copies of $S^2 \times S^2$, the surgery kernel is a hyperbolic form. Applying \cite{PRT20}*{Corollary~1.4} to the surgery kernel, there is a homeomorphism
\begin{equation}\label{eq:RP4-stable}
\RP^4 \# E_8 \# \bighash{k} (S^2 \times S^2) \cong X  \# \bighash{4+k} (S^2 \times S^2)
\end{equation}
for some integer $k$ and for some $4$-manifold $X$.  By additivity of the Kirby-Siebenmann invariant (see e.g.~\cite{guide}*{Theorem~8.2})
\[\ks(X) = \ks(X  \# \bighash{4+k} (S^2 \times S^2))
=\ks(\RP^4 \# E_8 \# \bighash{4+k} (S^2\times S^2)) = \ks(E_8) = 1,
\]
whereas $\RP^4$ is smooth and so $\ks(\RP^4)=0$.  We can then define
\[\mathcal{R} := X.\]
By~\cite{HKT-nonorientable}*{Theorem~3} the manifold $X$ is determined up to homeomorphism by the existence of the homeomorphism~\eqref{eq:RP4-stable}, and in particular coincides with Ruberman's construction.
Since we built $\mathcal{R}$ by killing the surgery kernel of a degree one normal map, we have constructed an element of the structure set, $[\mathcal{R} \to \RP^4] \in \mathcal{S}(\RP^4)$, and in particular $\mathcal{R} \simeq \RP^4$.

\subsection{\texorpdfstring{Kreck's examples $K3 \# \RP^4$ and $\bighash{11} (S^2 \times S^2) \# \RP^4$}{Kreck's examples K3+RP4 and 11S2xS2+RP4}}\label{subsection:row-5}

These are a pair of relatively easy to understand exotic 4-manifolds discovered by Kreck~\cite{Kreck-exotic}.  Indeed
this was the first known exotic pair of closed 4-manifolds; the examples of Cappell-Shaneson discussed in \cref{subsection:row-6} were constructed earlier, but they were not shown to be homeomorphic until much later~\cite{HKT-nonorientable}.
The $K3$ surface is a well-known smooth, simply connected 4-manifold. One way to construct it is to first consider $E(1) := \CP^2  \# \bighash{9} \ol{\CP^2}$, which comes with an \emph{elliptic fibration} $F \colon E(1) \to S^2$  (be warned that this is not a Serre or Hurewicz fibration).
Generically the point inverse images are tori with trivial normal bundles $T^2 \times D^2$.  The \emph{fibre sum} of two copies of $E(1)$ is the $K3$ surface:
\[K3 := E(2) := E(1) \#_{T^2} E(1) = (E(1) \sm T^2 \times \mathring{D}^2) \cup_{T^2 \times S^1} (E(1) \sm T^2 \times \mathring{D}^2).\]
We can then construct the manifolds $K3 \# \RP^4$ and $\bighash{11} (S^2 \times S^2) \# \RP^4$.

\begin{itemize}
\item The manifolds $K3 \# \RP^4$ and $\bighash{11}(S^2 \times S^2) \# \RP^4$ are smooth, nonorientable, and have nontrivial fundamental group isomorphic to $\Z/2$. They have equal Euler characteristics $\chi(K3 \# \RP^4) = 23 = \chi(\bighash{11}(S^2 \times S^2) \# \RP^4)$.
  \item As we will explain below, Kreck showed that $K3 \# \RP^4$ and $\bighash{11} (S^2 \times S^2) \# \RP^4$ are homeomorphic but not stably diffeomorphic.
 \item Since they are homeomorphic, they are stably homeomorphic, $\CP^2$-stably homeomorphic, homotopy equivalent, simple homotopy equivalent, and topologically $h$- and $s$-cobordant.
 \item Since they are not stably diffeomorphic, the two 4-manifolds are not diffeomorphic, and neither smoothly $h$-cobordant nor smoothly $s$-cobordant.
 \end{itemize}

\begin{remark}
More generally, Kreck~\cite{Kreck-exotic}*{Theorem~1} showed that there is at least one such example of a pair of homeomorphic but not stably diffeomorphic smooth 4-manifolds for each 1-type $(\pi,w\colon \pi \to C_2)$ with $\pi$ a finitely presented group and $w$ nontrivial.
By Gompf's result (\cref{thm:gompf-stable}), orientable (stably) homeomorphic 4-manifolds are stably diffeomorphic, so this phenomenon only arises for nonorientable manifolds.
\end{remark}

Now we argue why $K3 \# \RP^4$ and $\bighash{11} (S^2 \times S^2) \# \RP^4$ are homeomorphic. The intersection form of $K3$ is isometric to the orthogonal sum of two $E_8$ forms and a rank 6 hyperbolic form.
By the classification of closed, simply connected 4-manifolds up to homeomorphism~\cites{F,FQ}, it follows that $K3$ is homeomorphic to $E_8 \# E_8 \# \bighash{3} (S^2 \times S^2)$, where $E_8$ as before denotes the $E_8$ manifold constructed by Freedman~\cite{F}*{Theorem~1.7}.
Next, observe that there is a unique connected sum of an orientable manifold  with a nonorientable manifold such as $\RP^4$, because any two embeddings of $D^4$ in $\RP^4$ are isotopic.
Using this we have homeomorphisms
\begin{align*}
  \RP^4 \# K3 &\cong \RP^4 \# E_8 \# E_8 \#\bighash{3} (S^2 \times S^2) \\
  &\cong \RP^4 \# \overline{E_8} \# E_8 \#\bighash{3} (S^2 \times S^2)\\
  &\cong \RP^4 \#\bighash{11} (S^2 \times S^2).
\end{align*}
The last homeomorphism uses the classification of closed, simply connected 4-manifolds again. Here the 4-manifolds $\overline{E_8} \# E_8 \#\bighash{3} (S^2 \times S^2)$ and $\bighash{11} (S^2 \times S^2)$ have isometric intersection forms, since they are indefinite and have the same rank, parity, and signature~\cite{MH}*{Theorem~5.3}.

Next we explain the obstruction to stable diffeomorphism. Heuristically, the construction of a homeomorphism above does not work smoothly, because one cannot split the $K3$ surface smoothly, because of Rochlin's theorem that smooth, spin, closed, 4-manifolds have signature divisible by 16~\cite{Rochlin} (see also~\citelist{\cite{freedman-kirby}\cite{Kirby_lecturenotes}*{Chapter~XI}}.
When necessary we use $\RP^{\infty} \simeq B\Z/2 = K(\Z/2,1)$ as a model for the Eilenberg-Maclane space.
Define
\[B:= B\Z/2 \times \BSpin\]
and let $\xi \colon B \to BO$ be the composition
\[B= B\Z/2 \times \BSpin \xrightarrow{\gamma^{\perp} \times p} BO \times BO \xrightarrow{\oplus} BO,\]
where $\gamma^{\perp}$ is the orthogonal complement to the tautological line bundle $\gamma$ over $B\Z/2 \simeq \RP^{\infty}$, $p \colon \BSpin \to BO$ is the standard projection, and $\oplus$ is the map corresponding to the Whitney sum of stable bundles.   Replace $\xi$ by a fibration, and by an abuse of notation denote the resulting homotopy equivalent domain by the same letter $B$ and the new map to $BO$ again by $\xi$.

Let $\Omega_4(B,\xi)$ denote the group of bordism classes of closed, smooth 4-manifolds, equipped with a lift to $B$ of the classifying map $\nu_M \colon M \to BO$ of the stable normal bundle. In other words, $\Omega_4(B,\xi)$ has elements represented by pairs $(M,\widetilde{\nu}_M)$, where $M$ is a closed, smooth $4$-manifold and $\nu_M$ classifies its stable normal bundle, such that the following diagram commutes.
\[\begin{tikzcd}
  & B \ar[d,"\xi"] \\ M \ar[ur,"\wt{\nu}_M"] \ar[r,"\nu_M"] & BO.
\end{tikzcd}\]
A $(B,\xi)$-bordism between $(M,\wt{\nu}_M)$ and $(N, \wt{\nu}_N)$ is a compact, smooth 5-manifold $(W,\wt{\nu}_W)$ with a corresponding lift of the stable normal bundle $\nu_W \colon W \to BO$ to $B$, and a diffeomorphism $i_M \sqcup i_N \colon M \sqcup N \xrightarrow{\cong} \partial W$ such that $\wt{\nu}_W \circ i_M = \wt{\nu}_M$ and $\wt{\nu}_W \circ i_N = \wt{\nu}_N$.

For $M \in \{K3 \# \RP^4, \bighash{11} (S^2 \times S^2) \# \RP^4\}$, Kreck showed that $M$ admits a 2-connected lift $\wt{\nu}_M \colon M \to B$. Roughly speaking, this is because $(B,\xi)$ was chosen to be compatible with the Stiefel-Whitney classes $w_1(\nu_M)$ and $w_2(\nu_M)$, and also with the fundamental group $\pi_1(M)$.
The following theorem is a special case of~\cite{surgeryandduality}*{Theorem~C}.

\begin{theorem}[Kreck~\cite{Kreck-exotic}]\label{thm:kreck-stable-diffeo}
Let $(M,\wt{\nu}_M), (N, \wt{\nu}_N) \in \Omega_4(B,\xi)$ with $\wt{\nu}_M$ and $\wt{\nu}_N$ 2-connected. Then $M$ and $N$ are stably diffeomorphic if and only if
\[[(M,\wt{\nu}_M)] = [(N, \wt{\nu}_N)] \in \Omega_4(B,\xi)/ \hAut(B,\xi),\]
where $\hAut(B,\xi)$ denotes the group of fibre homotopy classes of fibre homotopy equivalences of the fibration $\xi \colon B \to BO$.
\end{theorem}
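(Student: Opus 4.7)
The plan is to prove both implications via Kreck's modified surgery machinery.

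For the forward direction, suppose $f\colon M \# \bighash{s}(S^2 \times S^2) \xrightarrow{\cong} N \# \bighash{t}(S^2 \times S^2)$ is a diffeomorphism. The key observation is that $S^2 \times S^2$ is null-bordant in $\Omega_4(B,\xi)$ for any compatible lift: it bounds $S^2 \times D^3$, which retracts onto $S^2$, and the $B$-lift extends because the restriction of $\xi$ to the image sphere is null-homotopic on the $2$-skeleton. Hence stabilising with $S^2 \times S^2$ does not change the class in $\Omega_4(B,\xi)$, so $[M \# \bighash{s}(S^2 \times S^2)] = [(M, \wt{\nu}_M)]$ and similarly for $N$. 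The diffeomorphism $f$ transports $\wt{\nu}_{N\# \cdots}$ to another normal $1$-smoothing of $M \# \bighash{s}(S^2 \times S^2)$; since any two $2$-connected lifts of the same stable normal bundle differ by a fibre homotopy self-equivalence of $\xi\colon B\to BO$, the classes $[(M,\wt{\nu}_M)]$ and $[(N,\wt{\nu}_N)]$ agree in the quotient by $\hAut(B,\xi)$.

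For the reverse direction, after modifying $\wt{\nu}_N$ by a representative of the relevant element of $\hAut(B,\xi)$, one may assume $[(M,\wt{\nu}_M)] = [(N,\wt{\nu}_N)]$ in $\Omega_4(B,\xi)$, yielding a smooth bordism $(W^5;M,N)$ with a lift $\wt{\nu}_W\colon W\to B$ restricting correctly on the boundary. First, perform interior surgery on $W$, relative to the boundary, to make $\wt{\nu}_W$ a $2$-connected map; since the boundary lifts are already $2$-connected, each obstructing relative homotopy class in dimensions below the middle can be represented by an embedded framed sphere by transversality and general position in the smooth $5$-manifold $W$, where the classical Whitney trick is available.

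After this normalisation, the remaining obstruction to upgrading $W$ to an $h$-cobordism lies in the quadratic refinement of the intersection form on the kernel module $\pi_3(W,M)$, defining an element of Kreck's obstruction monoid $l_5(\Z\pi,w)$. The critical input, proved in~\cite{surgeryandduality}, is that connected-summing $M$ or $N$ with $S^2\times S^2$ adds a hyperbolic summand to this quadratic module, and any element of $l_5$ is trivialised by adding sufficiently many hyperbolic summands. Hence after enough stabilisation, the middle-dimensional surgery can be completed to yield a smooth $h$-cobordism between $M \# \bighash{s}(S^2\times S^2)$ and $N \# \bighash{t}(S^2\times S^2)$. Applying \cref{thm:h-cob-implies-stably-diff-homeo}, these stabilised manifolds are stably diffeomorphic, so $M$ and $N$ are stably diffeomorphic. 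The main obstacle is the algebraic statement that hyperbolic stabilisation annihilates every class in $l_5$; this is the technical heart of Kreck's paper~\cite{surgeryandduality}, which we invoke as a black box.
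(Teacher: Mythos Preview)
The paper does not prove this theorem; it is stated with attribution to Kreck and described as a special case of \cite{surgeryandduality}*{Theorem~C}.  So there is no in-paper proof to compare against, and your sketch must be judged on its own merits.

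Your forward direction is essentially correct: stabilising by $S^2\times S^2$ does not change the $B$-bordism class, and two normal $1$-smoothings of the same manifold differ by an element of $\hAut(B,\xi)$.

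Your reverse direction has the right opening (choose a $B$-bordism $W$, do surgery below the middle dimension to make $\wt\nu_W$ $2$-connected) but then takes a wrong turn.  The claim that ``any element of $l_5$ is trivialised by adding sufficiently many hyperbolic summands'' is false: if it held, the monoid $l_5(\Z\pi,w)$ would be trivial after group-completion, contradicting the fact that it surjects onto the ordinary $L$-group $L_5(\Z\pi,w)$, which is often nonzero.  Moreover, connect-summing the boundary with $S^2\times S^2$ amounts to gluing a product piece $S^2\times S^2\times I$ to $W$, which does not alter the $l_5$-obstruction at all.  Kreck's $l$-monoid machinery is the obstruction theory for the \emph{unstable} problem; it is not how the stable theorem is proved.

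The correct (and much shorter) argument for the reverse direction, which is Kreck's, runs exactly parallel to the proof of \cref{thm:h-cob-implies-stably-diff-homeo} in this paper.  Once $\wt\nu_W\colon W\to B$ is $2$-connected, the inclusions $M\hookrightarrow W$ and $N\hookrightarrow W$ induce isomorphisms on $\pi_1$, so handle trading gives $W$ a handle decomposition relative to $M$ with only $2$- and $3$-handles.  The $2$-handles are attached along null-homotopic circles, hence the middle level is $M\#\bighash{t_1}(S^2\times S^2)\#\bighash{t_2}(S^2\wt\times S^2)$; the same holds from the $N$ side.  The spin/non-spin dichotomy for the universal cover then lets you replace all $S^2\wt\times S^2$ summands by $S^2\times S^2$, and you are done.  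No $l_5$ is needed.
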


Kreck showed~\cite{Kreck-exotic}*{Proposition~2} that there is an isomorphism
\begin{equation}\label{eq:alpha-16}
\alpha \colon \Omega_4(B,\xi) \xrightarrow{\cong} \Z/16,
\end{equation}
with $\alpha(\bighash{11}(S^2 \times S^2),s ) = 0$ and $\alpha(K3,s) = 8$.   Here these are simply connected 4-manifolds, and the maps $s$ factor through $\{\pt\} \times \BSpin$, and correspond to the unique spin structures on the respective 4-manifolds.
 The automorphism group  $\hAut(B,\xi)$ acts by isomorphisms of $\Omega_4(B,\xi)\cong \Z/16$ and so preserves these two elements.  Since $\alpha$ is a homomorphism and the connected sum is $(B,\xi)$-bordant to the disjoint union, it follows that $K3 \# \RP^4$ and  $\bighash{11} (S^2 \times S^2)  \# \RP^4$ are distinct in $\Omega_4(B,\xi)/ \hAut(B,\xi)$
and are therefore not stably diffeomorphic. Next we proceed to explain the computation of $\Omega_4(B,\xi)$ and the isomorphism $\alpha$ in a little more detail.

Represent an element of $\Omega_4(B,\xi)$ by a map $f \colon M \to \RP^4 \times \BSpin$, using the cellular approximation theorem and the fact that $B\Z/2\cong \RP^\infty$.
Let $\pr_1 \colon \RP^4 \times \BSpin \to \RP^4$ be the projection, and make $\pr_1 \circ f$ transverse to $\RP^{3} \subseteq \RP^4$. Let $F \subseteq M$ be the inverse image $f^{-1}(\RP^{3}\times\BSpin)$, which is a closed 3-manifold.  The restriction of $f$ to $F$ determines, with a little work, an induced spin structure on $F$ and a map $F \to \RP^{3} \subseteq \RP^{\infty} \cong B \Z/2$. So $F$ determines an element of $\Omega_3^{\Spin}(B\Z/2)$.

Every element $(F,f)$ of $\Omega^{\Spin}_3(B\Z/2)$ bounds some spin $4$-manifold $W$ which admits a branched double covering $\wh W \to  W$ restricting to the double cover $\wh F \to  F$ corresponding to $f$ with branching set a 2-dimensional submanifold $\Sigma$ of $W$.  We will show the existence of such a null-bordism in the next paragraph. Note that $\partial(\nu F)\cong\wh F$, where $\nu F$ is the normal bundle of $F$ in $M$ and that $M_F := M\setminus \nu F$ is spin. Thus we can form the closed oriented spin $4$-manifold $M_W:= M_F \cup_{\wh F} -\wh W$, where the orientation on $M_F$ is induced from the $(B,\xi)$-structure on $M$. Then we define a precursor of the invariant from \eqref{eq:alpha-16}, as
\[\ol{\alpha} (M,f) := \sign(M_W) - \Sigma\cdot \Sigma\in \Z/32,\]
where $\Sigma\cdot \Sigma$ denotes the homological self-intersection number of $\Sigma\subseteq \wh W$.
Kreck showed that $\ol{\alpha}$ is a well-defined invariant of $\Omega_4(B,\xi)$, namely it is unaffected by cobordism over $B$ and is independent of the choice of $W$ and the branching set.
This uses the Atiyah-Singer $G$-signature theorem~\cite{Atiyah-Singer-G-sign}*{Section~6} (see also \cite{Gordon-G-signature}) and Rochlin's theorem~\cite{Rochlin} (see also~\citelist{\cite{freedman-kirby}\cite{Kirby_lecturenotes}*{Chapter~XI}}.
Thus as alluded to above the exotic behaviour can be ultimately be traced back to Rochlin's theorem.

Now we show the existence of a null-bordism $W$, as promised. Kreck showed in~\cite{Kreck-exotic}*{Proposition~3} that $\Omega_3^{\Spin}(B\Z/2) \cong \Z/8$, generated by the pair consisting of $(\RP^3,\operatorname{inc} \times s \colon \RP^3 \to \RP^{\infty} \times \BSpin)$, where $\operatorname{inc}$ is the inclusion and $s$ is some choice of spin structure.  The 3-manifold $\RP^3$ bounds the 4-manifold $V$ obtained by adding a 2-handle to $D^4$ along an unknot with framing coefficient 2. A generator of $\pi_2(V)$ is represented by an embedded 2-sphere. Taking the 2-fold cover of $V$ branched along such an embedding yields a branched covering $\wh{V} \to V$ restricting to the standard nontrivial double cover $S^3 \cong \partial \wh{V} \to \RP^3 \cong \partial V$ on the boundary. By taking boundary connected sums of $V$, we see that $k\RP^3$ bounds a $4$-manifold $W_k=\natural^k V$ with a branched double cover as claimed. For any element $(F,f)$ of $\Omega^{\Spin}_3(B\Z/2)$ we can now construct $W:=W_k\cup_{k\RP^3}W'$, where $W'$ is a spin bordism from $(F,f)$ to $k\RP^3$ over $B\Z/2$. The null-bordism $W$ admits a branched double cover as required in the definition of $\ol{\alpha}$. This completes the description of $\ol{\alpha}$.

Kreck showed~\cite{Kreck-exotic}*{p.\ 256} that $\im \ol{\alpha} = 2\Z/32 \cong \Z/16$, generated by $\ol{\alpha}(\RP^4,\wt{\nu}_{\RP^4})$ for some normal smoothing $\wt{\nu}_{\RP^4} \colon \RP^4 \to B$.
In this case $F=\RP^3$, $W=V$ from above, $\sign(M_F) = \sign(D^4) = 0$ and $\widehat{W} = \widehat{V}$ so $\sign(\wh{W}) =1$ and $\Sigma \cdot \Sigma =1$. Therefore $\sign(M_W) = -1$ and
so $\ol{\alpha}(\RP^4,f) = -2$.  Kreck also showed by analysing the Atiyah-Hirzebruch spectral sequence for $\Omega_4(B,\xi)$ that $|\Omega_4(B,\xi)| \leq 16$. It follows that $\ol{\alpha}$ is an isomorphism onto $2\Z/32 \subseteq \Z/32$.  Finally, $\ol{\alpha}(K3,s) = 16$ since $F= \emptyset$ and so $M_F = K3$. Since $\im \ol{\alpha} = 2\Z/32$ we may define $\alpha := \ol{\alpha}/2 \colon  \Omega_4(B,\xi) \xrightarrow{} \Z/16$, which gives the isomorphism $\alpha$ that we have been trying to explain.

\subsection{\texorpdfstring{$\RP^4$ and the Cappell-Shaneson exotic $\RP^4$}{RP4 and the Cappell-Shaneson exotic RP4}}\label{subsection:row-6}

Cappell and Shaneson~\cites{CS-new-four-mflds-bams,CS-new-four-mflds-annals} constructed a smooth manifold $R$ which they showed is homotopy equivalent, but not stably diffeomorphic, to $\RP^4$; later work \cite{HKT-nonorientable} showed that $R$ is homeomorphic to $\RP^4$. Fintushel and Stern~\cite{Fintushel-Stern-exotic-free} constructed a smooth manifold $R_{FS}$ with the same properties as a quotient of $S^4$ by an exotic free action of $\Z/2$ on $S^4$. See~\cite{Aitchison-Rubinstein}*{Theorem~5.1} for the relationship between the two constructions.

The Fintushel-Stern construction is easier to describe, so we start with that.  Start with two copies of a compact, contractible, smooth $4$-manifold $U$ with boundary the Brieskorn homology sphere $\partial U  \cong \Sigma(3,5,19)$.   This homology sphere is a Seifert-fibred $3$-manifold.  The antipodal map on the $S^1$ fibres induces a fixed-point free, order-two self-diffeomorphism $t \colon \partial U \to \partial U$, as in~\cref{subsection:row-4}.
Fintushel-Stern showed that there is a diffeomorphism $S^4 \cong U \cup_{t} U$. Switching the two $U$ factors of the union gives rise to a smooth free involution $\wt{t} \colon S^4 \to S^4$. Define
\[
R_{FS}:= S^4/\wt{t} \equiv U/ x\sim t(x) \text{ for }x\in \partial U.
\]
This inspired the construction of $\mathcal{R}$ in~\cite{ruberman84} given in \cref{subsection:row-4}.

Next we recall the construction of the Cappell-Shaneson $R$. Let
\[A := \begin{pmatrix}
                                                         0  & 1 & 0 \\
                                                         0 & 0 & 1 \\
                                                         -1 & 1 & 0
                                                       \end{pmatrix}.\]
The matrix $A$ induces a diffeomorphism $\phi_A \colon T^3 \to T^3$. Consider the punctured 3-torus $T^3_0$ and the corresponding mapping torus for $A$, denoted~$M_{A,0}$.
Take $\RP^4$ and remove a neighbourhood $S^1 \mathbin{\wt{\times}} D^3$ of an embedded circle representing the generator of $\pi_1(\RP^4)\cong\Z/2$.  Then
\[R := (\RP^4 \sm S^1 \mathbin{\wt{\times}} D^3) \cup_{\partial} M_{A,0}.\]

\begin{itemize}
  \item The manifolds $R$ and $\RP^4$ are smooth and nonorientable. They have nontrivial fundamental group isomorphic to $\Z/2$.
  \item They have equal Euler characteristics $\chi(R)=1=\chi(\RP^4)$.
  \item Cappell-Shaneson~\cite{CS-new-four-mflds-annals} showed that $R$ and $\RP^4$ are homotopy equivalent, and therefore simple homotopy equivalent.
  \item After Freedman's work it was later shown~\cite{HKT-nonorientable} that these manifolds are furthermore homeomorphic.
  \item
By combining with their earlier work on $4$-dimensional surgery~\cite{CS-on-4-d-surgery}*{Theorem~2.4}, Cappell-Shaneson~\cite{CS-new-four-mflds-annals}*{p.~61} showed that, in addition to not being diffeomorphic, $R$ and $\RP^4$ are not stably diffeomorphic.
Alternatively, this can be seen using the $\alpha$ invariant, defined below, which turns out to be a stable diffeomorphism invariant, via the reduction of stable diffeomorphism to bordism over the normal 1-type. We refer to \cref{subsection:row-12}, page~\pageref{alpha-stable-diff-invariant},  where we explain this in more detail in the case of Akbulut's examples.
  \item Since they are homeomorphic they are stably homeomorphic, simple homotopy equivalent, topologically $h$- and $s$-cobordant, $\CP^2$-stably homeomorphic and $\CP^2$-stably diffeomorphic.
 \item Since they are not stably diffeomorphic they are not diffeomorphic. They are not smoothly $h$- or $s$-cobordant because they are not stably diffeomorphic.
 \item As mentioned above, the Fintushel-Stern 4-manifold $R_{FS}$ has exactly the same properties.
\end{itemize}

We now describe the diffeomorphism obstruction from \cite{CS-new-four-mflds-annals} used to show that $R$ is not diffeomorphic to $\RP^4$.
Consider the nonorientable linear $S^3$-bundle over $S^1$, which we denote by $S^1\mathbin{\wt{\times}} S^3$.
Cappell-Shaneson defined an invariant
\[
\alpha \colon \mathcal{N}(S^1\mathbin{\wt{\times}} S^3) \to \Z/32\]
 from the set of \emph{smooth} degree one normal maps with target $S^1\mathbin{\wt{\times}} S^3$.
Let $f \colon R \to S^1\mathbin{\wt{\times}} S^3$ represent an element of  $\mathcal{N}(S^1\mathbin{\wt{\times}} S^3)$.
Define $\Sigma_R$ to be a framed $3$-manifold $f^{-1}(S^3)$ obtained by  making $f$ transverse to a fibre $S^3$ in $S^1\mathbin{\wt{\times}} S^3$, taking the inverse image, and pulling back the framing.
Let $\mu(\Sigma_R) \in \Z/16$ be the Rochlin invariant of $\Sigma_R$, by definition the signature mod 16 of a framed 4-manifold with boundary $\Sigma_R$.
In addition, write $W:=R\sm \nu \Sigma_R$, and let $\sigma(W) \in\Z$ be its signature.
Then Cappell-Shaneson defined
  \[
  \alpha(R,f):=2\mu(\Sigma_R)-\sigma(W)\mod{32}.
  \]
In \cite{CS-new-four-mflds-annals}*{Proposition~2.1} they showed that $\alpha$ is a well-defined map $\alpha \colon \mathcal{N}(S^1\mathbin{\wt{\times}} S^3) \to \Z/32$ as claimed.  This invariant will be used again in \cref{subsection:row-12}; see also Kreck's invariant in \cref{subsection:row-5}, which was inspired by the Cappell-Shaneson invariant.

Let $M_A$ be the mapping torus for the diffeomorphism $\phi_A\colon T^3\to T^3$ used in the construction of $R$. In \cite{CS-new-four-mflds-annals}*{Propositions~2.1~and~2.2} it was shown that there is a degree one normal map $f \colon M_A \to S^1\mathbin{\wt{\times}} S^3$ and that $\alpha(M_A,f)$ is nonzero.  As such, just as in \cref{subsection:row-5}, the exotic behaviour is due to Rochlin's theorem. In \cite{CS-new-four-mflds-annals}*{Theorem~3.1}, Cappell and Shaneson use the nontriviality of $\alpha(M_A,f)$ to show that $R$ is not diffeomorphic to $\RP^4$.  This theorem shows that there is a homotopy equivalence $h \colon R \to \RP^{4}$ that is not homotopic to a diffeomorphism. Then since every homotopy equivalence of $\RP^4$ is homotopic to the identity (cf.\ \cref{prop:stab-diff}), it follows that there is no diffeomorphism between $R$ and $\RP^4$. It was shown in~\cite{gompf-killing} that the double cover of $R$ is diffeomorphic to $S^4$.

The Cappell-Shaneson construction can be varied by making different choices for the matrix $A$, giving rise to an exotic $\RP^4$ denoted $R_A$. The precise conditions are that $A \in \GL(3,\Z)$ with $\det(A) =-1$ and $\det(I-A^2) = \pm 1$; call such a matrix a \emph{Cappell-Shaneson matrix}. It can be seen from the construction that if $A$ and $A'$ are similar Cappell-Shaneson matrices then $R_A$ and $R_{A'}$ are diffeomorphic. The universal cover of $R_A$ is a smooth homotopy 4-sphere. Many of these have been shown to be diffeomorphic to $S^4$ -- we refer to the introduction of~\cite{Kim-Yamada} for a detailed survey. The most general result in this vein is that the universal cover of $R_A$ arising from a Cappell-Shaneson matrix $A$ with trace $n$ where $-64\leq n\leq 69$ is diffeomorphic to $S^4$ [ibid.]. However, this corresponds to only finitely many similarity classes,
and so there are infinitely many Cappell-Shaneson homotopy $4$-spheres which remain as potential counter-examples to the smooth 4-dimensional Poincar\'{e} conjecture.

\subsection{\texorpdfstring{$L \times S^1$ and $L' \times S^1$}{LxS1 and L'xS1}}\label{subsection:row-7}

Let $L$ and $L'$ be 3-dimensional lens spaces with the same fundamental group that are homotopy equivalent but not homeomorphic.  These are of the form $L_{p,q_1}$ and $L_{p,q_2}$ with $\gcd(p,q_i) =1$, and  $1 \leq q_i <p$ for $i=1,2$, such that for some $n$,  $q_1q_2 \equiv \pm n^2 \mod{p}$ (for homotopy equivalent), and $q_i\not\equiv\pm q_j^{\pm 1}\mod{p}$ (for non-homeomorphic)~~\cite{reidemeister}. See also~\cites{brody,davis-kirk,Cohen}.

\begin{itemize}
  \item The manifolds $L \times S^1$ and $L' \times S^1$ are smooth and orientable. They have fundamental group isomorphic to $\Z/p\times \Z$, for some $p$.
  \item They are homotopy equivalent because $L$ and $L'$ are, and since they are smooth they are therefore $\CP^2$-stably homeomorphic and $\CP^2$-stably diffeomorphic.
    \item The formula for the Whitehead torsion of a product of homotopy equivalences~\citelist{\cite{kwun-szczarba}*{Corollary~1.3}\cite{lueck-surgery-intro}} implies that $L \times S^1$ and $L' \times S^1$ are simple homotopy equivalent. Indeed, let $f \colon L \to L'$ be a homotopy equivalence. Let $i \colon \Z \to \Z/p \times \Z$ and let $j \colon \Z/p \to \Z/p \times \Z$ be the standard inclusions.  Then
  \begin{equation}\label{it:lens-4}
    \tau(f \times \Id) = j_*(\tau(f)) \cdot \chi(S^1) + \chi(L) \cdot i_*(\tau(\Id)) =  0,
  \end{equation}
  since $\chi(S^1)=0 = \chi(L)$, where $i_*$ and $j_*$ are the induced maps on Whitehead groups.
 \item
 In the late 1960s it was proven that $L \times S^1$ and $L' \times S^1$ are not diffeomorphic.
     If they were, then $L$ and $L'$ would be smoothly $h$-cobordant, which can be seen by embedding $L$ in the infinite cyclic cover $L' \times \RR$.  Atiyah-Bott~\cite{Atiyah-Bott}*{Theorem~7.27} and Milnor~\cite{Milnor-whitehead-torsion}*{Corollary~12.12} showed that smoothly $h$-cobordant lens spaces are homeomorphic.  Therefore~$L \times S^1$ and $L' \times S^1$ are not diffeomorphic.
 \item In the late 1980s, Turaev~\cite{Turaev1988} showed that moreover $L \times S^1$ and $L' \times S^1$ are not homeomorphic.  He showed that for any 3-manifolds $M$ and $M'$ that do not fibre over $S^1$ with periodic monodromy, the product $M \times S^1$ and $M' \times S^1$ are homeomorphic if and only if $M$ and $M'$ are homeomorphic~\cite{Turaev1988}*{Theorem~1.5}.  To show this he proved that such $M$ and $M'$ are topologically $h$-cobordant if and only if  $M$ and $M'$ are homeomorphic. It follows that $L \times S^1$ and $L' \times S^1$ are not homeomorphic. Of course this also reproves that they are not diffeomorphic.
  \item The manifolds $L \times S^1$ and $L' \times S^1$ are stably diffeomorphic, and therefore stably homeomorphic.
  We prove this in \cref{prop:stab-diff} below, by adapting a proof of Cappell-Shaneson~\cite{Cappell-Shaneson-nonlinear-similarity}. That this is possible was stated in \cite{Weinberger-higher-rho}.
\item By the topological $s$-cobordism theorem (\cref{thm:scob}), $L \times S^1$ and $L' \times S^1$ are not topologically $s$-cobordant and are therefore not smoothly $s$-cobordant. Here we use that $\Z/p\times \Z$ is a good group.
 \item They are not topologically $h$-cobordant, as we explain in \cref{prop:lens-space-x-S1-not-h-cobordant} using~\cite{Kwasik-Schultz}. It follows that they are not smoothly $h$-cobordant.
 \item By choosing a large enough value of $p$, one may use the classification of lens spaces to find arbitrarily large, finite sets $\{L_i\times S^1\}_i$, such that the elements pairwise satisfy the above properties.
   \end{itemize}

\begin{proposition}\label{prop:stab-diff}
	Let $L$ and $L'$ be 3-dimensional lens spaces that are homotopy equivalent but not homeomorphic. Then the $4$-manifolds $L \times S^1$ and $L' \times S^1$ are stably diffeomorphic.
\end{proposition}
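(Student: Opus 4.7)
The strategy is to apply Kreck's stable diffeomorphism classification (\cref{thm:kreck-stable-diffeo}) and reduce the problem to a computation in the bordism group of the common normal $1$-type modulo its automorphism group.

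First I will identify the normal $1$-type. Since $L$ is an orientable (hence parallelisable) $3$-manifold, both $L\times S^1$ and $L'\times S^1$ are smooth, orientable and spin, with fundamental group $\pi := \Z/p\times \Z$ and trivial orientation character. They therefore share the normal $1$-type $\xi\colon B:= B\pi \times \BSpin \to BO$, and both admit $2$-connected normal $1$-smoothings. By Kreck's theorem it suffices to prove the equality
\begin{equation*}
[L\times S^1] \,=\, [L'\times S^1] \,\in\, \Omega_4(B,\xi)/\hAut(B,\xi),
\end{equation*}
which is naturally identified with $\Omega_4^{\Spin}(B\pi)/\hAut(B,\xi)$.

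Next, using the product decomposition $B\pi \simeq B\Z/p \times S^1$, I will invoke the standard splitting
\begin{equation*}
\Omega_4^{\Spin}(B\Z/p \times S^1) \,\cong\, \Omega_4^{\Spin}(B\Z/p) \oplus \Omega_3^{\Spin}(B\Z/p),
\end{equation*}
arising from projection to $B\Z/p$ and transversality to a fibre $B\Z/p \times \{\pt\}$. Under this splitting, $[L\times S^1]$ projects to $(0,[L])$: the first coordinate vanishes because $L\times S^1 = \partial(L\times D^2)$ as a spin manifold over $B\Z/p$, and the second coordinate is $[L]$ itself, recovered by transversality. Similarly $[L'\times S^1] \mapsto (0,[L'])$. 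Thus the claim reduces to showing that $[L] = [L']$ in $\Omega_3^{\Spin}(B\Z/p)/\hAut(B,\xi)$.

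The last step is where the hypothesis on $L$ and $L'$ must enter. The group $\hAut(B,\xi)$ contains $\Aut(\pi)$, which in particular contains $(\Z/p)^* \subset \Aut(\Z/p)$ acting on the $\Z/p$-factor; the induced action on $\Omega_3^{\Spin}(B\Z/p)$ permutes lens-space bordism classes via the corresponding reparametrisations of $\pi_1$, and orientation reversal furnishes an additional $\pm1$-symmetry. Writing $L=L_{p,q}$ and $L'=L_{p,q'}$, the homotopy-equivalence hypothesis is the Reidemeister criterion $qq' \equiv \pm n^2 \pmod{p}$, from which I plan to extract a unit $u\in(\Z/p)^*$ (possibly composed with orientation reversal) that carries $[L]$ to $[L']$. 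The hardest part will be precisely this orbit calculation: for odd $p$ one has $\Omega_3^{\Spin}(B\Z/p)\cong H_3(B\Z/p;\Z)\cong \Z/p$ with $[L_{p,q}]$ proportional to $q$ and with $(\Z/p)^*$ acting by squares, so the Reidemeister criterion concludes the argument directly; for even $p$, however, $\Omega_3^{\Spin}(B\Z/p)$ carries additional $2$-torsion beyond $H_3$, and one must keep track of finer invariants, in the spirit of the $\eta$- and $\rho$-invariants exploited in Cappell-Shaneson~\cite{Cappell-Shaneson-nonlinear-similarity}, to confirm that the $\hAut(B,\xi)$-orbits on $\Omega_3^{\Spin}(B\Z/p)$ coincide exactly with the homotopy-equivalence classes of lens spaces.
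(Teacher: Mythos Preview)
Your approach via Kreck's bordism classification is genuinely different from the paper's, which instead works through the surgery sequence: the paper shows that a homotopy equivalence $h\colon L\to L'$ has the same normal invariant in $[L',G/\TOP]$ as the identity, then crosses with $S^1$, uses exactness of the surgery sequence for the good group $\Z/p\times\Z$ to place $L\times S^1$ and $L'\times S^1$ in the same $L_5^h$-orbit, and finally invokes Gompf's theorem. The advantage of the paper's route is that it reduces everything to the single obstruction class $\eta(h)^*(k)\in H^2(L';\Z/2)$, which is zero tautologically when $p$ is odd, and for $p$ even is handled by a short covering argument: pull back to the $r$-fold cover $\RP^3$, observe that $(f')^*\colon H^2(L';\Z/2)\to H^2(\RP^3;\Z/2)$ is an isomorphism, and use that every orientation-preserving self-homotopy equivalence of $\RP^3$ is homotopic to the identity.

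Your plan is fine for odd $p$: the identification $\Omega_3^{\Spin}(B\Z/p)\cong H_3(B\Z/p;\Z)\cong\Z/p$ holds, the $\Aut(\Z/p)$-action is by squares on $H_3$, and the Reidemeister criterion $qq'\equiv\pm n^2$ matches exactly the orbit relation. But for even $p$ you have left a genuine gap. You correctly flag that $\Omega_3^{\Spin}(B\Z/p)$ acquires extra $2$-torsion from the $H_1(B\Z/p;\Z/2)$ and $H_2(B\Z/p;\Z/2)$ lines of the Atiyah--Hirzebruch spectral sequence, and you gesture toward $\eta$- and $\rho$-invariants in the style of \cite{Cappell-Shaneson-nonlinear-similarity}, but you do not actually compute the lens-space classes in this larger group nor verify that the $\hAut(B,\xi)$-action identifies them. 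The Cappell--Shaneson paper you cite in fact only proves the relevant statement for even $p$ under the extra hypothesis that the double covers of $L$ and $L'$ are homeomorphic; getting rid of that hypothesis is exactly what requires a new idea. The paper's $\RP^3$-cover trick supplies that idea cheaply in dimension three, whereas completing your bordism computation for even $p$ would require a substantially harder analysis of $\Omega_3^{\Spin}(B\Z/2^k)$ and its automorphisms.
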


\begin{proof}
The strategy is as follows. Let $\pi_1(L)\cong \pi_1(L')\cong \Z/p$ and let $h\colon L\to L'$ be a homotopy equivalence. Let $\mathcal{S}^h(L')$ be the homotopy structure set of $L'$ and consider the map in the surgery sequence $\eta \colon \mathcal{S}^h(L') \to \mathcal{N}(L') \cong [L',\G/\TOP]$ with target the normal invariants of $L'$. This map is defined for 3-manifolds, even though there is no analogue of the entire surgery sequence for 3-manifolds (but see \cite{Kirby-Taylor}*{Theorem~4} for a version with a homology structure set).
We will show that the homotopy equivalences $h \colon L \to L'$ and $\Id \colon L' \to L'$ determine equal elements $\eta(h) = \eta(\Id) \in \mathcal{N}(L')$ i.e.\ normally bordant degree one normal maps.   Crossing with $S^1$ we see that $\eta(h \times \Id) = \eta(\Id \times \Id) \in \mathcal{N}(L' \times S^1)$. Since $\Z/p \times \Z$ is a good group, the surgery sequence is exact. Therefore $[h \times \Id \colon L \times S^1 \to L' \times S^1]$ and $[\Id \times \Id \colon L' \times S^1 \to L' \times S^1]$  are in the same orbit of the action of $L_5^h(\Z[\Z/p \times \Z])$ on $\mathcal{S}^h(L' \times S^1)$. Then, by the definition of the Wall realisation $L_5^h$ action, it follows that $L \times S^1$ and $L' \times S^1$ are stably homeomorphic. Then since these 4-manifolds are smooth and orientable, they are in fact stably diffeomorphic by~\cref{thm:gompf-stable}.

  We therefore have to show that $\eta(h) = \eta(\Id) \in \mathcal{N}(L')$. For this we adapt the proof of \cite{Cappell-Shaneson-nonlinear-similarity}*{Proposition~2.1}, where the corresponding fact for lens spaces with even order fundamental group was proven in all dimensions, under an additional hypothesis that the double covers are homeomorphic. We will show that in dimension 3 the extra hypotheses are not needed.

  First recall that there is a 4-connected map $k \colon \G/\TOP \to K(\Z/2,2)$, corresponding to a universal cohomology class $k \in H^2(\G/\TOP;\Z/2)$~\citelist{\cite{Madsen-Milgram}\cite{Kirby-Taylor}}.
This induces a homomorphism
\[k_*\colon [L',\G/\TOP]\to [L',K(\Z/2,2)]\cong H^2(L';\Z/2).\]
Consider $\eta(h)$ and $\eta(\Id)$ as elements of $[L',\G/\TOP]$. Since $\eta(\Id)=0$, also $k_*(\eta(\Id)) =0 \in H^2(L';\Z/2)$. So we have to show that $k_*(\eta(h)) = 0$, and then we will have shown that both maps $L'\to \G/\TOP$ are null-homotopic, and hence that the two lens spaces are normally bordant.
    If $p$ is odd, then $H^2(L';\Z/2) \cong H_1(L';\Z/2) =0$, so we are done. We therefore assume that $p$ is even, in which case $H^2(L';\Z/2) = \Z/2$, and we have something to check. So let $p = 2r$, for some $r \geq 1$.

  Now we diverge from the proof of~\cite{Cappell-Shaneson-nonlinear-similarity}.  Let $f \colon \wt{L} \to L$ and $f' \colon \wt{L'} \to L'$ be the $r$-fold covers so that $\pi_1(\widetilde{L})\cong \Z/2\cong\pi_1(\widetilde{L'})$. Note that $\wt{L}$ and $\wt{L'}$ are again lens spaces, so $\wt{L} \cong \wt{L'} \cong \RP^3 = L_{2,1}$.
  We claim that $(f')^* \colon H^2(L';\Z/2) \to H^2(\RP^3;\Z/2)$ is an isomorphism.
  Note that $f'_*\colon H_1(\RP^3;\Z)\to H_1(L';\Z)$ is given by multiplication with $r$. Hence on the cellular $\Z$-chain complexes, $f'_1$ is given by multiplication with $r$ and thus $f'_2$ is the identity as can be seen by considering the following commutative diagram of the cellular chain complexes over $\Z$.
  \[\begin{tikzcd}
  	\Z\ar[r,"0"]\ar[d,"f'_3"']&\Z\ar[r,"\cdot 2"]\ar[d,"\cdot 1","f'_2"']&\Z\ar[r,"0"]\ar[d,"\cdot r","f'_1"']&\Z\ar[d,"\cdot 1","f'_0"']\\
  	\Z\ar[r,"0"]&\Z\ar[r,"\cdot 2r"]&\Z\ar[r,"0"]&\Z.
  	\end{tikzcd}\]
  The claim that $(f')^* \colon H^2(L';\Z/2) \to H^2(\RP^3;\Z/2)$ is an isomorphism immediately follows from this.
By the commutative square
\[\begin{tikzcd}
{[L',\G/\TOP]}\ar[r,"(f')^*"]\ar[d,"k_*"]&{[\RP^3,\G/\TOP]}\ar[d,"k_*"]\\
H^2(L';\Z/2)\ar[r,"(f')^*","\cong"']&H^2(\RP^3;\Z/2)
\end{tikzcd}\]
in order to prove $k_*(\eta(h))=0$, it suffices to show that $(f')^*(\eta(h))=0$. Since $(f')^*\colon \cN(L')\to \cN(\RP^3)$ is given by pulling back along $f'$, we have $(f')^*(\eta(h))=\eta(\wt h)$, where
   $\wt{h} \colon \RP^3 \to \RP^3$ is obtained from lifting $h \circ f \colon \RP^3 \to L'$ to $\RP^3$ along $f'$ as in the diagram
   \[\begin{tikzcd}
     \RP^3 \ar[r,dashed,"\wt{h}"] \ar[d,"f"] & \RP^3 \ar[d,"f'"] \\ L \ar[r,"h"] & L'.
   \end{tikzcd}\]
  We assert that every orientation-preserving homotopy self-equivalence of $\RP^3$, and so in particular $\wt{h}$, is homotopic to the identity. It then follows that $\wt{h}$ is trivial in the structure set of $\RP^3$, and so $\eta(\wt{h}) = \eta(\Id_{\RP^3})=0$ as desired.

It remains to prove the assertion that every orientation-preserving homotopy self-equivalence of $\RP^3$ is homotopic to the identity. This can be proven via obstruction theory, by iteratively extending a map defined on $\RP^3\times \{0,1\}$ to $\RP^3\times [0,1]$. Since the target $\RP^3$ is path connnected, there is no obstruction to extending over the relative $1$-cells of $\RP^3\times [0,1]$, i.e.\ to defining the homotopy on the $0$-cells of $\RP^3$.  For $k\geq 2$, the obstruction to extending over the relative $k$-cells of $\RP^3 \times [0,1]$ lies in \[H^{k}(\RP^3 \times [0,1],\RP^3 \times\{0,1\};\pi_{k-1}(\RP^3)).\]
  For $k=2$, the obstruction vanishes because both maps induce the identity on $\pi_1(\RP^3)\cong\Z/2$. Since $\pi_2(\RP^3) = \pi_2(S^3) =0$, the remaining obstruction lies in
  \[H^{4}(\RP^3 \times [0,1],\RP^3 \times\{0,1\};\pi_{3}(\RP^3)) \cong  H_0(\RP^3 \times [0,1];\Z) \cong \Z.\]
  The obstruction measures the difference in the degrees of the two maps. Since both are degree 1, the obstruction vanishes and the assertion is proved.
\end{proof}

	\begin{proposition}\label{prop:lens-space-x-S1-not-h-cobordant}
		Let $L$ and $L'$ be homotopy equivalent lens spaces that are not homeomorphic.
		The manifolds $L\times S^1$ and $L'\times S^1$ are not topologically $h$-cobordant.
	\end{proposition}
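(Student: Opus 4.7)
The plan is to reduce the $4$-dimensional $h$-cobordism question to the $3$-dimensional statement that $h$-cobordant lens spaces are homeomorphic, via a slicing argument, following~\cite{Kwasik-Schultz}. Suppose for contradiction that $(W; L \times S^1, L' \times S^1)$ is a topological $h$-cobordism, so $\pi_1(W) \cong \Z/p \times \Z$. The projection $\pi_1(W) \to \Z$ yields a classifying map $c \colon W \to S^1$, which may be chosen to restrict to the second-factor projection on each boundary component. By topological transversality for codimension-one submanifolds of $5$-manifolds~\cite{FQ}*{Section~9.5}, one can arrange $c$ to be transverse to a regular value $0 \in S^1$, so that the preimage $W_0 := c^{-1}(0)$ is a topological $4$-manifold with $\partial W_0 = L \sqcup L'$.

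The crux of the argument is to show that $W_0$ is itself a topological $h$-cobordism from $L$ to $L'$. I would pass to the infinite cyclic cover $\widetilde{W} \to W$ corresponding to the kernel of $\pi_1(W) \to \Z$. Since $W \simeq L \times S^1$, the cover $\widetilde{W}$ deformation retracts onto $L \times \RR \simeq L$, and likewise onto $L' \times \RR \simeq L'$. The cover decomposes as a $\Z$-indexed union of fundamental-domain slabs glued along translates of $W_0$, and a Mayer-Vietoris argument with local coefficients then forces the inclusions $L \hookrightarrow W_0$ and $L' \hookrightarrow W_0$ to induce isomorphisms on all twisted homology groups, hence to be homotopy equivalences by Whitehead's theorem.

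Finally, the Atiyah-Bott-Milnor theorem~\cite{Atiyah-Bott}*{Theorem~7.27},~\cite{Milnor-whitehead-torsion}*{Corollary~12.12} implies that $h$-cobordant lens spaces are homeomorphic: although originally stated in the smooth category, the proof proceeds via $\rho$-invariants and Reidemeister torsion, both of which are topological invariants, so the conclusion extends to topological $h$-cobordism. This contradicts the hypothesis that $L$ and $L'$ are not homeomorphic.

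The main obstacle is the middle step, verifying that the slice $W_0$ is a topological $h$-cobordism. This is a topological incarnation of the Farrell fibering analysis; the careful infinite cyclic cover computation, together with the requisite topological transversality, is the technical content of~\cite{Kwasik-Schultz}.
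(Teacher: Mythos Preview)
Your approach is genuinely different from the paper's. You go \emph{down} in dimension, slicing the $5$-dimensional $h$-cobordism $W$ transverse to $S^1$ to extract a putative $4$-dimensional $h$-cobordism $W_0$ from $L$ to $L'$, and then invoking the rigidity of lens spaces under $h$-cobordism. The paper instead goes \emph{up}: it crosses $W$ with $S^1$ and uses the product formula for Whitehead torsion (with $\chi(S^1)=0$, exactly as in~\eqref{it:lens-4}) to conclude that $W \times S^1$ is a $6$-dimensional $s$-cobordism from $L \times T^2$ to $L' \times T^2$. The high-dimensional $s$-cobordism theorem then gives a homeomorphism $L \times T^2 \cong L' \times T^2$, and toral stability for lens spaces~\cite{Kwasik-Schultz}, proved via higher $\rho$-invariants~\cite{Weinberger-higher-rho}, forces $L \cong L'$.

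You correctly flag the middle step as the obstacle, but your Mayer--Vietoris sketch does not close it. Transversality alone does not control $\pi_1(W_0)$; one must first perform ambient $1$-surgery on $W_0$ inside the $5$-manifold $W$ to arrange $\pi_1(W_0)\cong\Z/p$, and only then can one even speak of $\Z[\Z/p]$-coefficients on $W_0$. Even after that, knowing $\widetilde W\simeq L$ and writing $\widetilde W$ as a $\Z$-indexed union of slabs does not force $H_*(W_0,L;\Z[\Z/p])=0$: extra homology in the slabs can cancel only in the direct limit, so the individual slice need not be acyclic. Turning this into an honest argument is a codimension-one splitting problem over the HNN extension $\Z/p\times\Z$, with potential Nil-type obstructions that must be analysed rather than assumed away. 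The paper's route sidesteps all of this --- the product formula is elementary and the $s$-cobordism theorem is classical in dimension six --- and it cites~\cite{Kwasik-Schultz} for toral stability, not for the slicing you attribute to them.
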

	
	\begin{proof}
  		Assume that there is a topological $h$-cobordism $W$ from $L\times S^1$ to $L'\times S^1$.
		As in \eqref{it:lens-4} above, taking a product with $S^1$ kills the Whitehead torsion, i.e.\ $W\times S^1$ is an $s$-cobordism from $L\times S^1\times S^1$ to $L'\times S^1\times S^1$. The high-dimensional $s$-cobordism theorem then implies that $L \times S^1 \times S^1$ and $L' \times S^1 \times S^1$ are homeomorphic. But this implies that $L$ and $L'$ are homeomorphic by the toral stability property for lens spaces \cite{Kwasik-Schultz} which can be seen using higher $\rho$-invariants \cite{Weinberger-higher-rho}. This is a contradiction to our assumption on $L$ and $L'$.
	\end{proof}

Indeed if $L$ and $L'$ are not homeomorphic, then even $L\times \R^2$ and $L'\times\R^2$ are not homeomorphic \cite{Kwasik-Schultz}*{Theorem~1.4}.

\subsection{\texorpdfstring{Donaldson's examples $E(1)$ and the Dolgachev surface $E(1)_{2,3}$}{Donaldson's examples E(1) and the Dolgachev surface  E(1)(2,3)}}\label{subsection:row-8}

As mentioned before, Kreck (\cref{subsection:row-5}) and Cappell-Shaneson (\cref{subsection:row-6}) constructed the first examples of exotic $4$-manifolds. These were nonorientable, and the obstructions used arose from Rochlin's theorem.

New examples of exotic pairs, including simply connected examples, were provided by Donaldson~\cite{Donaldson:1987-1}, and many others after him (see e.g.\ \cites{GompfStip,akbulut-book}). Donaldson's first examples consisted of $E(1) = \CP^2 \# \bighash{9} \ol{\CP}^2$ and the \emph{Dolgachev surface} $E(1)_{2,3}$, which is obtained from $E(1)$ via two log transforms. Let us recall the construction.
The 4-manifold $E(1)$ admits the structure of an elliptic fibration $f \colon E(1) \to S^2$. Let $T^2 \subseteq E(1)$ be a generic fibre. Its normal bundle is a copy of $T^2 \times D^2$ embedded in $E(1)$.

In general, a \emph{log transform} is a surgery operation on a 4-manifold $X$ with a smoothly  embedded torus $T^2 \subseteq X$ with trivial normal bundle which cuts out a neighbourhood $T^2 \times D^2$ of $T^2$, and glues it back via a diffeomorphism of $\partial(T^2 \times D^2) = T^3$.   Let $\{\alpha, \beta, [\partial D^2]\}$ be a basis for $H_1(T^3)$. Then by definition we reglue to form
\[X' := \ol{X \sm (T^2 \times D^2)} \cup_{\varphi} (T^2 \times D^2)\]
using a diffeomorphism $\varphi_p \colon T^3 \to T^3$, for some $p \in \Z$, corresponding to an element
\[\begin{pmatrix}
  1 & 0 & 0 \\ 0 & 0 & 1 \\ 0 & -1 & p
\end{pmatrix}\]
of $\operatorname{GL}(3,\Z)$.
Perform two of these log transform operations on $E(1)$, on disjoint generic fibres of the elliptic fibration, one with $p=2$ and one with $p=3$.  The resulting 4-manifold is the Dolgachev surface $E(1)_{2,3}$. One can also construct $E(1)_{2,3}$ by a single \emph{knot surgery} operation~\citelist{\cite{FS-ICM}\cite{FS-knot-surgery}\cite{GompfStip}*{Section~10.3}\cite{akbulut-book}*{Section~6.5}} on a generic fibre of $E(1)$, using a trefoil knot~\citelist{\cite{park-noncomplex}*{p.\ 7}\cite{FS-lectures}*{Lecture~6, Section~2}}. Akbulut used this in~\cite{Akbuklut-kebab} to obtain a description of the Dolgachev surface without $1$- or $3$-handles.

\begin{itemize}
\item $E(1)$ and $E(1)_{2,3}$ are smooth, closed, orientable, and simply connected.
\item They have isometric intersection forms $\langle +1\rangle \oplus 9\langle -1\rangle$ and so are homeomorphic~\cites{F,FQ}, and therefore (simple) homotopy equivalent, stably homeomorphic, topologically $h$- and $s$-cobordant, and $\CP^2$-stably homeomorphic.
\item They are homeomorphic and orientable and therefore stably diffeomorphic and $\CP^2$-stably diffeomorphic (\cref{thm:gompf-stable}).
\item They are smoothly $h$-cobordant by~\cite{Wall-on-simply-conn-4mflds}*{Theorem~2}, and are therefore smoothly $s$-cobordant since the Whitehead group of the trivial group is trivial.
\item They are not diffeomorphic~\cite{Donaldson:1987-1}, via tools of Yang-Mills gauge theory.
\end{itemize}

While Rochlin's theorem suffices to construct exotic pairs of nonorientable $4$-manifolds (see \cref{subsection:row-5,subsection:row-6}) it seems that one requires the full force of gauge theory to detect orientable exotic pairs. After Donaldson's work, Seiberg-Witten theory provided an easier, but nonetheless still highly nontrivial, way to distinguish manifolds such as $E(1)$ and $E(1)_{2,3}$ which are related by log transforms or knot surgery.  There is a large literature on generalisations of Donaldson's example, as described in~ \cites{GompfStip,akbulut-book}. As stated before, since this is not our focus we restrict ourselves to recalling the first known example.


\subsection{\texorpdfstring{$\bighash{3} E_8$}{3E8} and the Leech manifold}\label{subsection:row-9}

Next we present 4-manifolds that are stably homeomorphic but not homotopy equivalent.
Freedman~\cite{F} showed that every nonsingular, symmetric, integral bilinear form can be realised as the intersection pairing of a closed, simply connected, topological $4$-manifold.
The forms $\oplus^3 E_8$ and the Leech lattice are even, symmetric, positive definite bilinear forms of rank and signature 24, so they are realised by closed $4$-manifolds that we denote by $\bighash{3} E_8$ and $Le$ respectively.

\begin{itemize}
  \item  The manifolds $\bighash{3} E_8$ and $Le$ are simply connected and orientable with $\chi(\bighash{3} E_8)= 26 = \chi(Le)$.
  \item Since they are spin with signature $24$, Rochlin's theorem implies that the manifolds both have nontrivial Kirby-Siebenmann invariant, and are therefore not smoothable.
 \item The manifolds have inequivalent intersection pairings and are therefore not homotopy equivalent. As a result they are neither simple homotopy equivalent, homeomorphic, topologically $h$-cobordant, nor topologically  $s$-cobordant.
 \item Since they are spin and the Euler characteristics and the signatures coincide, $\bighash{3} E_8$ and $Le$ are stably homeomorphic and therefore also $\CP^2$-stably homeomorphic, as follows. The stable classification of closed, simply connected, spin topological 4-manifolds is essentially due to Wall~\cite{Wall-on-simply-conn-4mflds}*{Theorems~2~and~3}: two such 4-manifolds are stably homeomorphic if and only if there are choices of orientations with respect to which the manifolds are equivalent in the topological spin bordism group $\Omega_4^{\TOPSpin} \cong \Z$, with the isomorphism given by $[M] \mapsto \sigma(M)/8$.  Wall worked in the smooth category, but the analogous topological category result is straightforward to deduce~\cite{KPT}*{Section~2.2}.
 \item The smooth questions are not applicable to this pair.
\end{itemize}

The downside of this example is that the manifolds are not smoothable.   It turns out that this is inevitable when considering simply connected 4-manifolds, as shown by the next proposition.

\begin{proposition}\label{prop:simply-conn-hom-equiv-iff-stably-diff}
Closed, smooth, simply connected 4-manifolds $M$ and $N$ with equal Euler characteristics are stably diffeomorphic if and only if they are homotopy equivalent.
\end{proposition}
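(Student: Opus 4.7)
The plan is to prove the two implications separately, using as a common pivot the classical Whitehead--Milnor theorem that two closed, simply connected $4$-manifolds are homotopy equivalent if and only if their intersection forms are isometric.

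For the reverse implication, if $M \simeq N$ then automatically $\chi(M) = \chi(N)$, and by Whitehead--Milnor the intersection forms $Q_M$ and $Q_N$ are isometric. Wall's theorem \cite{Wall-on-simply-conn-4mflds}*{Theorem~2} upgrades isometric intersection forms in the smooth simply connected setting to a smooth $h$-cobordism, which by \cref{thm:h-cob-implies-stably-diff-homeo} implies stable diffeomorphism.

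For the forward implication, starting from a diffeomorphism $M \#\bighash{s}(S^2 \times S^2) \cong N \#\bighash{t}(S^2 \times S^2)$, I would first compare Euler characteristics: the hypothesis $\chi(M) = \chi(N)$ forces $s = t$, so the induced isometry on intersection forms reads $Q_M \oplus s H \cong Q_N \oplus s H$, where $H$ is the rank-two hyperbolic form. The substantive step is to cancel the $s H$ summand. The plan is to use Donaldson's diagonalisation theorem: if $Q_M$ is definite, then $Q_M \cong \bighash{n}\langle \pm 1\rangle$, and matching rank and signature across the stable isometry forces $Q_N$ to be the same diagonal form; if instead $Q_M$ is indefinite, the same rank--signature comparison shows $Q_N$ is indefinite as well, and the classical classification of indefinite unimodular integral symmetric bilinear forms by rank, signature, and parity (all of which are preserved under the stabilisation by $H$) yields $Q_M \cong Q_N$. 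One final application of Whitehead--Milnor then gives $M \simeq N$.

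The main obstacle is the cancellation step above. Purely algebraically, stable equivalence of unimodular integral forms under $H$ does not imply isometry; indeed, there are distinct even positive definite forms that become isometric only after stabilisation, and these are precisely the sort of forms that Donaldson's theorem rules out for smooth simply connected $4$-manifolds. So it is Donaldson's theorem, rather than any formal algebraic cancellation, that makes the argument go through. This also explains why the analogous statement fails in the topological category, as exhibited by the pair $\bighash{3}E_8$ and the Leech manifold in \cref{subsection:row-9}.
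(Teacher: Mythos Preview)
Your proposal is correct and follows essentially the same approach as the paper: Wall's theorem for the direction from homotopy equivalence to stable diffeomorphism, and for the converse the dichotomy into definite (handled by Donaldson's diagonalisation) and indefinite (handled by the rank--signature--parity classification) cases. The only point worth tightening is that the signature comparison across the stable diffeomorphism holds only after a possible orientation flip on one side, which the paper makes explicit; your phrase ``matching rank and signature across the stable isometry'' should be read with that caveat.
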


\begin{proof}
Let $M$ and $N$ be closed, smooth, and simply connected $4$-manifolds. Assume that $M$ and $N$ are homotopy equivalent. Then they have isometric intersection forms, for some choice of orientation, so by
 \cite{Wall-on-simply-conn-4mflds}*{Theorem~2} they are smoothly $h$-cobordant, and by \cite{Wall-on-simply-conn-4mflds}*{Theorem~3} (see also~\cref{prop:stab-diff}) they are stably diffeomorphic.

For the other direction, assume $M$ and $N$ are stably diffeomorphic and $\chi(M)=\chi(N)$. Then modulo changing orientations, $\sigma(M)=\sigma(N)$. Since $\chi(M)=\chi(N)$ and $\sigma(M)=\sigma(N)$, the intersection forms of $M$ and $N$ are either both definite or both indefinite. In the definite case, the intersection forms must be diagonal by Donaldson's theorem~\cite{Donaldson}, and so the intersection forms are isometric,  and therefore the manifolds are homotopy equivalent~\cites{Whitehead-4-complexes,Milnor-simply-connected-4-manifolds}.
 For the indefinite case, note that since the hyperbolic form is even, and the intersection forms of $M$ and $N$ become isometric after stabilising, they are either both odd or both even.
Indefinite forms are determined up to isometry by the rank, parity, and signature~\cite{MH}*{Theorem~5.3} and so again~$M$ and~$N$ are homotopy equivalent.
\end{proof}

Finally, we note that any pair of even, inequivalent, nonsingular, symmetric, integral, bilinear forms with equal rank and signature could have been used in this section to produce a pair of stably homeomorphic but not homotopy equivalent manifolds.

\subsection{Kreck--Schafer manifolds}\label{subsection:row-10}

Kreck and Schafer~\cite{Kreck-Schafer} constructed smooth 4-manifolds that are stably diffeomorphic but not homotopy equivalent. As observed in \cref{prop:simply-conn-hom-equiv-iff-stably-diff}, their examples are necessarily not simply connected.

They  used the  following general construction.  Start with a finite presentation of a group. Form the corresponding presentation 2-complex $X$. Thicken it to a 5-dimensional manifold $N(X)$, e.g.\ by embedding $X$ in $\R^5$ and letting $N(X)$ denote a smooth regular neighbourhood~\cite{thickenings}. Then consider the 4-manifold $\partial N(X)$. One can use this to find a 4-manifold with any given finitely presented fundamental group.

For any two finite 2-complexes $X$ and $X'$ with the same fundamental group, there are integers $m,n$ such that $X \vee^m S^2 \simeq X' \vee^n S^2$~\cite{lms197}*{(40)}.  It follows that the boundaries of the 5-dimensional thickenings $\partial N(X)$ and $\partial N(X')$ are stably diffeomorphic.

Kreck and Schafer used finite 2-complexes $X$ and $X'$ with the same fundamental group, as above, that have the same Euler characteristic but are not homotopy equivalent. Finding examples of 2-complexes with this property is rather difficult, but examples are known~\cites{Metzler,Sieradski,Lustig}.
Kreck and Schafer's obstruction applies for many nontrivial fundamental groups, the smallest of which is $\Z/5 \times \Z/5\times \Z/5$.
Kreck and Schafer then showed that for their particular choices of $X$ and $X'$, the 4-manifolds $\partial N(X)$ and $\partial N(X')$ are not homotopy equivalent.  These manifolds have the additional interesting property that their intersection forms, and indeed their equivariant intersection forms, are hyperbolic.

\begin{itemize}
\item The manifolds $\partial N(X)$ and $\partial N(X')$ are smooth, non-simply-connected, oriented manifolds with the same Euler characteristic that are stably diffeomorphic but not homotopy equivalent.
\item Since they are not homotopy equivalent, they are also not simple homotopy equivalent, nor homeomorphic, nor diffeomorphic, nor $h$- or $s$-cobordant in either category.
\item Since they are stably diffeomorphic they are stably homeomorphic, $\CP^2$-stably diffeomorphic, and $\CP^2$-stably homeomorphic.
\end{itemize}

Kreck and Schafer found pairs of $4$-manifolds with the properties listed.
Are there stable diffeomorphism classes of smooth, oriented $4$-manifolds containing infinitely many homotopy equivalence classes, all with the same Euler characteristic?  Or even arbitrarily many?

\subsection{Teichner's \texorpdfstring{$E \# E$ and $*E \# \star E$}{E+E and *E+*E}}\label{subsection:row-11}
\label{sec:E-and-friends}

A \emph{star partner} of a $4$-manifold $M$ is a manifold $\star M$ such that there exists a homeomorphism $M\#\star\CP^2\cong \star M\#\CP^2$ preserving the decomposition on $\pi_2$, where $\star \CP^2$ is the Chern manifold whose construction we recalled in \cref{subsection:row-3}. Let $E$ denote the unique fibre bundle over $\RP^2$ with fibre $S^2$, that has orientable but not spin total space. We give a Kirby diagram in~\cref{fig:E}. This is a smooth, closed, orientable 4-manifold with fundamental group $\Z/2$. Teichner~\cite{Teichner:1997-1} showed that $E$ has a star partner $\star E$ which is simple homotopy equivalent to $E$ but has opposite Kirby-Siebenmann invariant. This will also follow from the more general \cref{starhomeo-are-homotopy-equivalent,lemma:star-partner-exists} below. By the surgery exact sequence, if $\pi_1(M)\cong\Z/2$, then $\star M$ is unique up to homeomorphism if it exists (see also \cite{Teichner:1997-1}*{Theorem~1}). In particular, this means that $\star E$ is the unique star partner for $E$.

\begin{figure}[htb]
	\centering
\begin{tikzpicture}
        \node[anchor=south west,inner sep=0] at (0,0){	\includegraphics{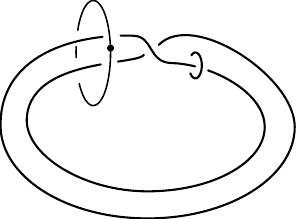}};
		\node at (3.25,2.1) {$0$};
		\node at (4,3.1) {$1$};
	\end{tikzpicture}
\caption{A Kirby diagram for the manifold $E$, the unique fibre bundle over $\RP^2$ with fibre $S^2$, that has orientable but not spin total space.}
\label{fig:E}
\end{figure}

\begin{itemize}
  \item The manifolds $E \# E$ and $\star E \# \star E$ are orientable, with nontrivial fundamental group isomorphic to $\Z/2 * \Z/2$.
  \item The manifold $E$ is smooth by construction, and therefore so is $E \# E$.  The manifold $\star E$ is not smoothable, but it is currently open whether $\star E\# \star E$ is smoothable.  Indeed $\star E\# \star E$ has vanishing Kirby-Siebenmann invariant and so is stably smoothable, i.e.\ there exists a $k$ such that $\star E\# \star E  \# \bighash{k} (S^2 \times S^2)$ is a smooth manifold.
  \item Teichner~\cite{Teichner:1997-1}*{Proposition~3} showed that  $E \# E$ and $\star E \# \star E$ are not stably homeomorphic, and therefore they are not homeomorphic and not $s$- or $h$- cobordant.
 \item They are simple homotopy equivalent, since $E$ and $\star E$ are simple homotopy equivalent. This will also follow from \cref{starhomeo-are-homotopy-equivalent} later, since we will show that they are both star partners of $E\# \star E$. Recall also that the Whitehead group of the infinite dihedral group is trivial~\cite{stallings-freeproducts}, and so it also suffices to know that they are homotopy equivalent.
 \item Since they are homotopy equivalent and have trivial Kirby-Siebenmann invariant they are $\CP^2$-stably homeomorphic. The homeomorphism from \eqref{eq:EE-CP2stable} below gives an alternative argument, and shows that only one $\CP^2$-factor is required.
 \item For smooth examples with the same properties, consider the pair $E\# E \# \bighash{k} (S^2 \times S^2)$ and $\star E\# \star E  \# \bighash{k} (S^2 \times S^2)$. As mentioned above, there exists $k$ for which these are smooth since $\ks(E\#E)=\ks(\star{E}\#\star{E})=0$. These two manifolds are still simple homotopy equivalent, $\CP^2$-stably homeomorphic and therefore $\CP^2$-stably diffeomorphic, but not stably homeomorphic, and therefore not stably diffeomorphic, neither smoothly $s$- nor $h$-cobordant, and not diffeomorphic.
\end{itemize}

Taking connected sums of $E\# E$ and $\star E \# \star E$, as well as sufficiently many copies of $S^2 \times S^2$, one can construct homotopy equivalence classes containing arbitrarily many stable homeomorphism classes of smooth, orientable $4$-manifolds using the techniques of~\cite{Teichner:1997-1}. We omit the details.  In \cite{teichner-thesis}*{Example~5.2.4}, Teichner also constructed similar examples for finite fundamental groups with quaternionic 2-Sylow subgroup.

As a counterpoint to these examples we show that it is impossible to find infinite families of $4$-manifolds that are all homotopy homotopy equivalent but pairwise not stably homeomorphic. For this proof, we will need the following terminology.
The \emph{normal $1$-type} of a smooth $4$-manifold $M$ is a fibration $\xi\colon B \to  BO$, inducing an injection $\pi_2(B) \to \pi_2(BO)$ and an isomorphism on $\pi_i(B) \to \pi_i(BO)$ for $i>2$, which further admits a lift $\wt\nu_M \colon M \to B$ of the stable normal bundle $\nu_M\colon M\to BO$, inducing an isomorphism $\pi_1(B) \to \pi_1(BO)$ and a surjection $\pi_2(B) \to \pi_2(BO)$. A choice of a lift $\wt\nu_M$ is called a \emph{normal $1$-smoothing} of $M$. For a normal $1$-type $(B,\xi)$, let $\Omega_4(B,\xi)$ denote the group of bordism classes of normal $1$-smoothings.
For topological $4$-manifolds, we have parallel notions of a topological normal $1$-type $B\to \BTOP$ and topological normal $1$-smoothings lifting the stable topological normal bundle. For a topological normal $1$-type $(B,\xi)$, let $\Omega_4^{\TOP}(B,\xi)$ denote the group of topological bordism classes of topological normal $1$-smoothings.

\begin{proposition}\label{prop:finite-stab-homeo}
The set of stable homeomorphism types of closed $4$-manifolds in a fixed homotopy type is finite.
Moreover the set of stable diffeomorphism types of closed, smooth $4$-manifolds in a fixed homotopy type is finite.
\end{proposition}

\begin{proof}
Let $M$ be a closed $4$-manifold with $\pi:=\pi_1(M)$ and orientation character $w$. We will use that the composition $\mathcal{S}^h(M)\xrightarrow{\eta} \mathcal{N}(M)\xrightarrow{\sigma} L_4^h(\Z\pi,w)$ in the surgery sequence is trivial, in both the smooth and topological categories, and with no restriction on fundamental groups.

First we give the proof in the topological category.
We claim that the map $\eta\colon \mathcal{S}^h(M)\to \mathcal{N}(M)$ has finite image.
Recall from \cref{Section:review-surgery} that $\mathcal{N}(M)\cong H^4(M;\Z)\oplus H^2(M;\Z/2)$. Here we see that $H^2(M;\Z/2)$ is finite and by Poincar\'e duality $H^4(M;\Z)\cong H_0(M;\Z^w)$. So when $M$ is orientable, we have that $H_0(M;\Z^w)\cong H_0(M;\Z)\cong \Z$, which maps injectively into $L_4^s(\Z\pi,w)$ under the surgery obstruction map $\sigma$.
Consider the degree one normal map $f\colon M\# \bighash{k} E_8\to M$ given by the collapse map, where as before $E_8$ denotes the manifold constructed by Freedman~\cite{F}*{Theorem~1.7}. Then under the augmentation map $L_4^h(\Z\pi) \to L_4^h(\Z)$, $\sigma([M\# \bighash{k} E_8,f])$ maps to $k \in \Z \cong L_4(\Z)$.  It follows that the image of $\sigma$ is infinite and therefore since $\sigma$ is a homomorphism, the kernel of $\sigma$ is finite.
When $M$ is nonorientable, $H_0(M;\Z^w)\cong \Z/2$, and so $\mathcal{N}(M)$ is already finite.
This completes the proof of the claim.

To complete the proof in the topological category we show that
two elements $(N,f), (N',f')\in \mathcal{S}^h(M)$ with equal image in $\mathcal{N}(M)$ are stably homeomorphic. Let $(B,\xi)$ denote the topological normal $1$-type of $M$ and let $\wt\nu_M$ be a topological normal $1$-smoothing. Then $\wt\nu_M\circ f$ and $\wt\nu_M\circ f'$ are normal $1$-smoothings for $N$ and $N'$ respectively, and moreover $(N,\wt\nu_M\circ f)$ and $(N',\wt\nu_M\circ f')$ are equal in $\Omega_4^{\TOP}(B,\xi)$ by hypothesis. By \cite{surgeryandduality}*{Theorem~C}, the manifolds $N$ and $N'$ are stably homeomorphic. This completes the proof of the first statement.

Now assume that $M$ is a smooth, closed 4-manifold.  If $M$ is orientable, then by \cite{Gompf-stable} (see also \cite{guide}*{Theorems~12.13}) every pair of stably homeomorphic smooth 4-manifolds is stably diffeomorphic, and so we are done. Suppose that $M$ is nonorientable.
As in the topological case, it suffices to show that the set $\cN^\Diff(M)$ of smooth normal invariants is finite.
Since $\PL/\OO$ is 6-connected, $\cN^\Diff(M)\cong[M,\G/ \OO]\cong [M,\G/\PL]\cong \mathcal{N}^{\PL}(M)$, and so it suffices to show that $\mathcal{N}^{\PL}(M)$ is finite.
The fibre sequence
$\TOP/\PL \to \G/\PL \to \G/\TOP$ induces an exact sequence of sets \[[M,\TOP/\PL] \to [M,\G/\PL] \to [M,\G/\TOP],\]
which translates to
\[H^3(M;\Z/2) \to \mathcal{N}^{\PL}(M) \to \mathcal{N}(M).\]
The first and last terms are finite sets, and therefore so is $\mathcal{N}^{\PL}(M) \cong \mathcal{N}^{\Diff}(M)$, as desired.
\end{proof}

Next we take the opportunity to prove some basic facts about the star construction, some of which were used in the discussion at the start of the section. We show that star partners are simple homotopy equivalent and that the star partnership relation is symmetric.
Then we discuss uniqueness of star partners, and we give a criterion that guarantees star partners exist.

\begin{lemma}\label{starhomeo-are-homotopy-equivalent}
Let $M$ be a $4$-manifold with a star partner $\star M$. Then $M$ and $\star M$ are simple homotopy equivalent.
\end{lemma}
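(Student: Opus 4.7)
The plan is to transport the hypothesized homeomorphism through a simple homotopy equivalence $\star\CP^2 \simeq_s \CP^2$, and then to cancel the $\CP^2$ summand using the $\pi_2$-preservation.

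First, as recalled in \cref{subsection:row-3}, $\star\CP^2$ is homotopy equivalent to $\CP^2$; since both are simply connected and $\Wh(\{e\}) = 0$, any such homotopy equivalence is automatically simple. Fix one, $h \colon \star\CP^2 \to \CP^2$. Performing $h$ inside the $\star\CP^2$-summand of $M \# \star\CP^2$, away from the 4-ball along which the connected sum is formed, produces a simple homotopy equivalence $M \# \star\CP^2 \simeq_s M \# \CP^2$. Composing with the given homeomorphism $M \# \star\CP^2 \cong \star M \# \CP^2$ (itself a simple homotopy equivalence) yields a simple homotopy equivalence $\Psi \colon M \# \CP^2 \to \star M \# \CP^2$ which, by construction, preserves the splitting of $\pi_2$.

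Next, up to simple homotopy equivalence, $M \# \CP^2$ admits a CW model of the form $(M \vee S^2) \cup_\alpha e^4$ in which the $S^2$ summand realises the $\Z[\pi_1(M)]$-summand of $\pi_2$ coming from $\CP^2$; the same holds for $\star M \# \CP^2$. After a cellular approximation, the $\pi_2$-preservation lets us arrange that $\Psi$ sends the left $S^2$ wedge summand to the right one. Its restriction to 3-skeleta is then a simple homotopy equivalence $M \vee S^2 \simeq_s \star M \vee S^2$ respecting the wedge decomposition, and cancelling the $S^2$ wedge summand produces the desired $M \simeq_s \star M$.

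I expect the main obstacle to be making the cancellation step rigorous: one needs to fix a cellular model for $M \# \CP^2$ in which the $\CP^2$-summand genuinely appears as a wedge factor through dimension three, homotope $\Psi$ to preserve this decomposition, and verify that the $S^2$-cancellation has vanishing Whitehead torsion. A related subtlety is that a general topological $4$-manifold $M$ need not admit a CW structure, so strictly speaking the argument should be carried out using the normal disc bundle $z_M \colon M \to D(M)$ from the definition of simple homotopy equivalence of manifolds given in \cref{sec:defs}, rather than directly on $M$.
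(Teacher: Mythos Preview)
Your overall plan—obtain a simple homotopy equivalence between $M$-with-a-$\CP^2$-summand and $\star M$-with-a-$\CP^2$-summand that respects the $\pi_2$ splitting, then cancel the extra summand—is exactly the idea behind the paper's proof. The gap you flag in the cancellation step is, however, genuine and is not easily closed along the lines you suggest.

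The difficulty is this: preserving the $\pi_2$ decomposition lets you homotope $\Psi|_{S^2}$ into the right-hand $S^2$, but it gives you no control over $\Psi|_{M_0}$; there is no reason this should land in $(\star M)_0$, so there is no induced map $M_0 \to (\star M)_0$ to speak of, and ``restricting to the wedge decomposition'' is not well-defined. (Incidentally, your CW model should read $(M_0 \vee S^2) \cup e^4$, with $M_0$ the punctured manifold, rather than $(M \vee S^2) \cup e^4$, which already contains $M$'s top cell.) Even if one could arrange such a map on $3$-skeleta, matching the top-cell attaching maps to recover $M \simeq_s \star M$ would still be outstanding.

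The paper, following Stong, replaces this with the \emph{cap} construction: for $\beta \in \pi_2(X)$ set $\capp(X,\beta) := X \cup_\beta D^3$. The two facts needed are that $\capp(\CP^2,\alpha) \simeq_s S^4 \simeq_s \capp(\star\CP^2,\alpha')$ for generators $\alpha,\alpha'$, and that capping only depends on the homotopy class $\beta \in \pi_2$. Since the given homeomorphism $h \colon \star M \# \CP^2 \to M \# \star\CP^2$ preserves the $\pi_2$ splitting, it carries a cap on $\alpha$ to a cap on $h_*(\alpha)$, and one reads off directly
\[
M \simeq_s M \# \capp(\star\CP^2,h_*(\alpha)) \cong \capp(M \# \star\CP^2, h_*(\alpha)) \cong \capp(\star M \# \CP^2, \alpha) \simeq_s \star M.
\]
This is precisely the rigorous form of your ``cancel the $S^2$'' step; it needs no CW models or wedge decompositions, and it also makes your preliminary replacement of $\star\CP^2$ by $\CP^2$ unnecessary.
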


\begin{proof}
We use an argument due to Stong~\cite{Stong-conn-sum}*{Section 2}. For a 4-manifold $M$ and $\beta \in \pi_2(M)$, let $\capp(M,\beta)$ be the result $M \cup_{\beta} D^3$ of adding a 3-cell to $M$ along $\beta$.  Let $h \colon \star M \# \CP^2 \to M \# \star \CP^2$ be a homeomorphism preserving the decomposition of $\pi_2$. Let $\alpha \in \pi_2(\CP^2)\cong \Z$ be a generator. Then $h_*(\alpha)$ generates $\pi_2(\star \CP^2)$.  Further $\capp(\CP^2,\alpha) \simeq_s S^4$ and $\capp(*\CP^2,h_*(\alpha)) \simeq_s S^4$. Therefore
  \begin{align*}
    M &\cong M \# S^4 \simeq_s M \# \capp(\star\CP^2,h_*(\alpha)) \cong \capp(M\#\star\CP^2,h_*(\alpha)) \cong \capp(\star M\# \CP^2,h_*^{-1}\circ h_*(\alpha)) \\ &\cong \capp(\star M\# \CP^2,\alpha) \cong \star M\# \capp(\CP^2,\alpha) \simeq_s \star M \# S^4 \cong \star M
    \end{align*}
as desired.
\end{proof}

Note that $\star M$  and $M$ have opposite Kirby--Siebenmann invariants, by additivity of the Kirby-Siebenmann invariant under connected sum~\cite{guide}*{Theorem~8.2}.

In the next proof we will need the following fact. If the universal cover $\wt M$ is non spin and the fundamental group of $M$ is good, then $\star M$ is unique up to homeomorphism \cite{Stong-conn-sum}*{Corollary~1.2}.

\begin{proposition}\label{prop:star-symmetric}
	Let $M$ be a $4$-manifold with a star partner $\star M$. Suppose that $\pi_1(M)$ is good. Then the relation of being a star partner is symmetric, i.e.\ $M$ is a star partner of $\star M$.
\end{proposition}

\begin{proof}
We must show that there is a homeomorphism $M\# \CP^2 \cong \star M\#\star \CP^2$, preserving the decomposition on $\pi_2$. 

First we show that $M\# \CP^2$ is a star partner for $M\# \star \CP^2$.
By the classification of closed, simply connected 4-manifolds~\cites{F,FQ}, there is a homeomorphism \[\star\CP^2\#\star\CP^2\cong \CP^2\#\CP^2\] preserving the decomposition on $\pi_2$. Therefore, $(M\# \CP^2)\#\CP^2\cong (M\# \star \CP^2)\# \star \CP^2$, preserving the decomposition on $\pi_2$. This shows that $M\# \CP^2$ is a star partner for $M\# \star \CP^2$.

Since $\star M$ is a star partner of $M$, we see that $\star M\# \CP^2\cong M\# \star \CP^2$, preserving the decomposition on $\pi_2$. By taking a connected sum on both sides with $\star\CP^2$, we see that $\star M\#\star \CP^2$ is a star partner of $M\#\star\CP^2$.

Thus both $M\#\CP^2$ and $\star M\#\star\CP^2$ are star partners for $M\#\star \CP^2$.
Next we apply the uniqueness of star partners for manifolds with non-spin universal covers and good fundamental group mentioned above.
More precisely, we apply \cite{Stong-conn-sum}*{Theorem~1.1\,(b)}, using $\CP^2$ as the closed 1-connected 4-manifold, $W_1 = M\#\CP^2$, and $W_2 = \star M\#\star\CP^2$. By the previous two paragraphs we have homeomorphisms
\[\CP^2 \# (M\#\CP^2) \cong \star \CP^2 \# (M \# \star \CP^2) \cong \CP^2 \# (\star M\#\star\CP^2), \]
and Stong's theorem gives us the desired homeomorphism $M\#\CP^2 \xrightarrow{\cong}\star M\#\star\CP^2$ that preserves the decomposition on $\pi_2$.
\end{proof}

\begin{example}
 Teichner showed in~\cite{Teichner:1997-1} that uniqueness of star partners does not hold for non-spin manifolds with spin universal covers. In particular, there are homeomorphisms
\begin{equation}\label{eq:EE-CP2stable}
E\# (E \# \CP^2) \cong E\# (\star E \# \star \CP^2)= \star E\# (E \# \star \CP^2)\cong \star E\#(\star E\# \CP^2)
\end{equation}
where we have used that $E\# \CP^2 \cong \star E\# \star \CP^2$ by~\cref{prop:star-symmetric}, and that $E\# \star \CP^2\cong \star E\# \CP^2$, by the definition of $\star E$. This shows that both $E\# E$ and $\star E\# \star E$ are star partners of $E\# \star E$.
\end{example}

Next we give a general criterion for when star partners exist. For example, this can be used to establish the existence of $\star E$. The proof will use the following notion.

\begin{definition}
Let $M$ be a $4$-manifold. An immersion of a $2$-sphere $\alpha \colon S^2 \looparrowright M$ is said to be \emph{$\RP^2$-characteristic} if for every immersion $R \colon \RP^2 \looparrowright M$ such that $R^* w_1(M) = 0$, we have $\alpha \cdot R \equiv R\cdot R \in \Z/2$.
\end{definition}

The following consequence of work of Stong~\cite{Stong} is probably well-known to the experts, but has not appeared in print before.

\begin{proposition}\label{lemma:star-partner-exists}
Let $M$ be a 4-manifold with good fundamental group and containing an immersion $R \colon \RP^2 \looparrowright M$ such that $R\cdot R\equiv 1 \mod 2$ and $R^* w_1(M) = 0$. Then a star partner $\star M$ exists.
\end{proposition}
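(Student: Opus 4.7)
The plan is to produce $\star M$ via a surgery on $M$ along the given $\RP^2$ that flips the Kirby--Siebenmann invariant of its neighborhood, and then to verify the required homeomorphism by localising the difference between $\CP^2$ and $\star\CP^2$ inside that neighborhood. The first step is to promote the immersion $R$ to a topologically locally flat embedded $\RP^2 \subset M$: the conditions $R^*w_1(M)=0$ and $R\cdot R \equiv 1 \pmod 2$ force the normal bundle of $R$ to be the nonorientable $2$-plane bundle over $\RP^2$ with odd Euler number (with orientable total space), and one can resolve the double points of $R$ by applying the surface embedding theorem, since $\pi_1(M)$ is good. This may require first stabilising $M$ by copies of $S^2 \times S^2$, which is harmless for the final statement since the stabilising summands can be absorbed into the $\#\CP^2$ summand on either side of the target homeomorphism.

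Next I would take a closed tubular neighborhood $\nu$ of this embedded $\RP^2$ and produce a second compact topological $4$-manifold $\nu^\star$ together with an identification $\partial \nu^\star \cong \partial \nu$ such that $\ks(\nu^\star) = \ks(\nu) + 1 \in \Z/2$ and such that there is a homeomorphism $\nu \# \star\CP^2 \cong \nu^\star \# \CP^2$ relative to the boundary. This relative identity is the ``neighborhood version'' of $\CP^2 \# \star\CP^2 \cong \star\CP^2 \# \CP^2$, and I would derive it from Freedman's classification of simply connected compact $4$-manifolds with boundary applied to the universal cover, together with additivity of the Kirby--Siebenmann invariant under connected sum. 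Constructing $\nu^\star$ and verifying this relative homeomorphism will be the main technical obstacle of the argument; the remaining steps are essentially formal.

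Given such a $\nu^\star$, set $\star M := (M \sm \Int \nu) \cup_{\partial\nu} \nu^\star$. Performing the $\star\CP^2$- and $\CP^2$-connect sums inside $\Int \nu$ respectively $\Int \nu^\star$, we obtain the chain of homeomorphisms
\[
M \# \star\CP^2 \cong (M \sm \Int \nu) \cup (\nu \# \star\CP^2) \cong (M \sm \Int \nu) \cup (\nu^\star \# \CP^2) \cong \star M \# \CP^2,
\]
which is the desired identity. The decomposition on $\pi_2$ is preserved automatically, because the summand $2$-sphere generating the $\pi_2$-summand of $\star\CP^2$ on the left corresponds under this chain of homeomorphisms to the summand $2$-sphere generating the $\pi_2$-summand of $\CP^2$ on the right, both living in the local piece $\nu \# \star\CP^2 \cong \nu^\star \# \CP^2$ that is left fixed (up to relative homeomorphism) outside $\nu \cup \nu^\star$.
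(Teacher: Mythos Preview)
Your approach differs substantially from the paper's and contains real gaps. The paper never attempts to embed the $\RP^2$; instead it works inside $M\#\star\CP^2$, takes an immersed sphere $\alpha$ generating $\pi_2(\star\CP^2)$ with $\mu(\alpha)=0$, and uses the given immersion $R$ merely as a \emph{witness} that $\alpha$ is not $\RP^2$-characteristic (since $\alpha\cdot R=0$ while $R\cdot R=1$). Stong's embedding theorem then upgrades $\alpha$ to a locally flat embedded sphere $\alpha'$ with self-intersection $+1$, whose regular neighbourhood is a punctured $\CP^2$; excising it and capping with $D^4$ produces $\star M$ directly, with the $\pi_2$-decomposition preserved by construction.

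Your proposal has two concrete problems. First, the stabilisation manoeuvre fails: if you must pass to $M' = M \# \bighash{k}(S^2\times S^2)$ in order to embed $R$, you obtain at best a star partner of $M'$, not of $M$. A single $\CP^2$ summand cannot absorb copies of $S^2\times S^2$ --- recall $(S^2\times S^2)\#\CP^2 \cong \CP^2\#\CP^2\#\ol{\CP^2}$ --- and there is no mechanism to split the $S^2\times S^2$ summands back off your $\star M'$, precisely because the embedded $\RP^2$ had to run through them. Second, the construction of $\nu^\star$ together with the rel-boundary homeomorphism $\nu\#\star\CP^2 \cong \nu^\star\#\CP^2$ is essentially the whole content of the proposition, and you have not carried it out. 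Your $\nu$ has $\pi_1(\nu)\cong\Z/2$, so Freedman's simply-connected classification does not apply directly; passing to the double cover and descending would require an equivariance argument you have not supplied. In effect your sketch relocates the difficulty rather than resolving it, whereas the paper's route via Stong's theorem for non-$\RP^2$-characteristic spheres dispatches it cleanly.
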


\begin{proof} The manifold $\star M$ can be constructed as follows. Start with $M\#\star\CP^2$ and let $\alpha$ be an immersed sphere in $\star\CP^2$ with trivial self-intersection number, $\mu(\alpha)=0$, representing a generator of $\pi_2(\star \CP^2)$. Note that $\alpha$ is self-dual. In the construction of $\star\CP^2$ from \cref{subsection:row-3}, one can find such an $\alpha$ by gluing together the track of a null-homotopy for the trefoil in $D^4$ with the core of the attached $2$-handle, and then adjusting the self-intersection number by adding small cusps.
Since the mod 2 intersection numbers are such that  $\alpha\cdot R=0\neq R\cdot R \in \Z/2$, $\alpha$ is not $\RP^2$-characteristic. Stong \cite{Stong}*{p.~1310} proved that in this setting, where $\pi_1(M)$ is good, and $\alpha$ admits an algebraically dual immersed sphere but is not $\RP^2$-characteristic, then $\alpha$ is homotopic to an embedding $\alpha'$.
Since $\lambda(\alpha',\alpha')=1$, it follows that $\alpha'$ has a regular neighbourhood with boundary $S^3$. Consequently, $M\#\star \CP^2\cong N\#\CP^2$, where $N$ is obtained from $M\#\star \CP^2$ by replacing a regular neighbourhood of $\alpha'$ by $D^4$. By construction, $N$ is a star partner for $M$, which we denote by $\star M$.
\end{proof}

\subsection{\texorpdfstring{Akbulut's exotic $(S^1 \mathbin{\wt{\times}} S^3) \# (S^2 \times S^2)$}{Akbulut's exotic S1 ~xS3 + S2xS2}}\label{subsection:row-12}

In~\citelist{\cite{Akbulut-on-fake}*{\S3}\cite{akbulut-fake-gluck}} (see also~\cite{akbulut-book}*{Section~9.5}), Akbulut constructed a smooth, closed $4$-manifold $P$ that is homotopy equivalent to $Q:= (S^1 \mathbin{\wt{\times}} S^3) \# (S^2 \times S^2)$, but not diffeomorphic to $Q$. We give the construction presently. A manifold with similar properties was first constructed by Akbulut in~\cite{Akbulut-fake-4-manifold}. We say more about that and other alternative constructions at the end of the section.

\begin{figure}[htb]
	\centering
\begin{tikzpicture}
        \node[anchor=south west,inner sep=0] at (0,0){	\includegraphics{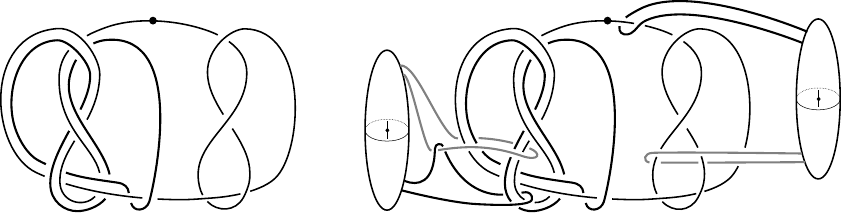}};
		\node at (2.5,-0.25) {(a)};
		\node at (10.35,-0.25) {(b)};
		\node at (2.75, 2.75) {$-1$};
		\node at (10.45, 2.75) {$-1$};
		\node at (12.75, 3.55) {$0$};
		\node at (7.3,2.2) {$0$};
	\end{tikzpicture}
\caption{(a) A Kirby diagram for the product $\Sigma\times [0,1]$, where $\Sigma$ denotes the $(2,3,7)$-Brieskorn sphere. The handle decomposition has a $3$-handle that is not pictured. The dotted circle indicates the complement in $D^3\times [0,1]$ of the product concordance from the figure eight knot to itself. A $-1$-framed $2$-handle is then attached along this concordance. The resulting handlebody has boundary~$\Sigma \# -\Sigma$. The $3$-handle is attached along the connected sum sphere to finish the construction. Here we have used that $\Sigma$ is the result of $-1$-framed Dehn surgery on the figure eight knot. (b) A nonorientable $1$-handle is attached, along with two $2$-handles. The attaching sphere of the new $1$-handle is shown using the notation of \citelist{\cite{Akbulut-fake-4-manifold}\cite{akbulut-book}*{Section~1.5}}: they are identified via the orientation preserving diffeomorphism $(x,y,z)\mapsto (x,-y,-z)$, with respect to coordinates based at the centre. One of the new $2$-handles is shown in grey to help the reader distinguish between the two $2$-handles. It is required, but can be checked, that the $2$-handles are attached along curves disjoint from the attaching $2$-sphere of the $3$-handle from (a). The manifold $P$ is formed by gluing on $D^4$.}
\label{fig:akbulut-brieskorn}
\end{figure}
Let $\Sigma$ denote the $(2,3,7)$-Brieskorn sphere. Recall that the Rochlin invariant of $\Sigma$, denoted by $\mu(\Sigma)$, is by definition the signature mod 16 of a smooth, spin 4-manifold with boundary $\Sigma$. In this case $\mu(\Sigma)\equiv 8\mod{16}$.  Attach a nonorientable $1$-handle to $\Sigma \times [0,1]$, joining the two boundary components, followed by a pair of $2$-handles, as shown in~\cref{fig:akbulut-brieskorn}. One then checks that the boundary of this new handlebody is $S^3$,
so it can be capped off with $D^4$, yielding the desired manifold~$P$.

\begin{itemize}
  \item The manifolds $P$ and $Q$ are smooth, closed, and nonorientable. They have nontrivial fundamental group $\Z$.
  \item The Euler characteristics are equal: $\chi(P)=2=\chi(Q)$.
  \item The equivariant intersection form of $P$ can be computed from its handle description. Wang's classification~\cite{Wang-nonorientable} of nonorientable $4$-manifolds with fundamental group $\Z$ up to homeomorphism then shows that $P$ and $Q$ are homeomorphic, since they have isomorphic equivariant intersection forms and equal Kirby-Siebenmann invariant. Here $P$ and $Q$ both have trivial Kirby-Siebenmann invariant since they are smooth.
  \item Since they are homeomorphic, $P$ and $Q$ are topologically $h$-cobordant, and consequently $s$-cobordant since the Whitehead group of $\Z$ is trivial. 
    \item \label{alpha-stable-diff-invariant} They are not stably diffeomorphic, which implies that they are not smoothly $h$-cobordant; the latter fact was shown by Akbulut~\cite{Akbulut-on-fake}*{Theorem~3}. His proof can be adapted to show the stronger fact that $P$ and $Q$ are not stably diffeomorphic, as we explain next.

        Let $c \colon Q \# \bighash{k} (S^2 \times S^2) \to S^1\mathbin{\wt{\times}} S^3$ be the map collapsing all $S^2 \times S^2$ factors.
        Akbulut constructed a homotopy equivalence~$f \colon P\# \bighash{k} (S^2 \times S^2) \to Q \# \bighash{k} (S^2 \times S^2)$, for all $k$, such that the Brieskorn sphere
         $\Sigma$ is the inverse image under $g:=c \circ f\colon P \# \bighash{k} (S^2 \times S^2) \to S^1\mathbin{\wt{\times}} S^3$ of a copy of $S^3$ in $S^1\mathbin{\wt{\times}} S^3$ and such that \[\sigma((P\# \bighash{k} (S^2 \times S^2)) \sm \nu \Sigma) = \sigma(S^3 \times [0,1] \# \bighash{k+1} (S^2 \times S^2)) =0.\]
        We can consider $(P \# \bighash{k} (S^2 \times S^2),g)$ as a degree one normal map in $\mathcal{N}(S^1\wt\times S^3)$, as usual modulo smooth normal bordism.

        We compute the Cappell-Shaneson $\alpha$-invariant $\alpha \colon \mathcal{N}(S^1\mathbin{\wt{\times}} S^3) \to \Z/32$, described in \cref{subsection:row-6}:
      \[\alpha(P\# \bighash{k} (S^2 \times S^2) ,g)=2\mu(\Sigma)-\sigma((P\# \bighash{k} (S^2 \times S^2)) \sm \nu \Sigma) \equiv 2\cdot 8 - 0 \equiv 16\mod{32}.\]
If $P$ and $Q$ were stably diffeomorphic, then we can use a diffeomorphism $h \colon Q \# \bighash{k} (S^2 \times S^2) \to P \# \bighash{k} (S^2 \times S^2)$ to obtain an degree one normal map $(Q\# \bighash{k} (S^2 \times S^2),g \circ h)$. Since $h$ is a diffeomorphism we would have $\alpha(Q\# \bighash{k} (S^2 \times S^2),g \circ h) = \alpha(P\# \bighash{k} (S^2 \times S^2),g) = 16$. However Akbulut also computed that $\alpha(Q \# \bighash{k} (S^2 \times S^2),\ell)=0$ for every degree one normal map $\ell \colon Q \# \bighash{k} (S^2 \times S^2) \to S^1\mathbin{\wt{\times}} S^3$. It follows that $P$ and $Q$ are not stably diffeomorphic.

\item Since $P$ and $Q$ are not smoothly $h$-cobordant, they not smoothly $s$-cobordant, nor diffeomorphic.
  \item   They are $\CP^2$-stably diffeomorphic and $\CP^2$-stably homeomorphic, because they are homotopy equivalent and both have vanishing Kirby-Siebenmann invariant. Moreover, via explicit handle manipulation~\citelist{\cite{Akbulut-on-fake}*{Theorem~1}\cite{akbulut-fake-gluck}\cite{akbulut-book}*{Exercise~9.3}}, one sees that $P$ is the result of a Gluck twist on an embedded $2$-sphere in $Q$. This shows that not only are $P$ and $Q$ $\CP^2$-stably diffeomorphic, but in fact $P\#\CP^2 \cong Q\# \CP^2$~\cite{Akbulut-on-fake}*{Corollary~2}.
\end{itemize}

Notably the pair $P$ and $Q$ comprise the first example where the Gluck twist operation on a 2-sphere changes the smooth structure of a $4$-manifold. Whether this is possible in the orientable setting remains open. Other examples of the operation changing the smooth structure on nonorientable $4$-manifolds are given in~\citelist{\cite{torres-gluck}\cite{KPR-gluck}*{Proposition~1.6}}.  See also~\cite{akbulut-yasui} for a condition that implies the Gluck twist operation does not change the diffeomorphism type.

As mentioned above, another manifold with similar properties as $P$ was constructed by Akbulut in~\cite{Akbulut-fake-4-manifold}. That manifold, which we call $P'$, also has an explicit handle decomposition~\cite{Akbulut-fake-4-manifold}*{Figure~4.6} consisting of one $0$-handle, one $1$-handle, two $2$-handles, one  $3$-handle, and one $4$-handle. Since $P'$ is nonorientable, the $1$-handle is necessarily nonorientable. In other words, $P'$ is obtained by attaching two $2$-handles to $S^1\mathbin{\wt{\times}} D^3$ and then capping off with another copy of $S^1\mathbin{\wt{\times}} D^3$. Akbulut showed using explicit moves on the handle decompositions that
  \[
  P'_0\cup_\partial (\RP^2\wt\times D^2)\cong R \# (S^2\times S^2),\]
where $P'_0:= P'\sm \Int{S^1\wt\times D^3}$ and $R$ is the Cappell-Shaneson exotic $\RP^4$ from \cref{subsection:row-6}. The double cover of $P'$ is the standard $(S^1 \times S^3) \# \bighash{2} S^2\times S^2$, as shown in~\cite{torres-gluck}*{Proposition~9}, using the fact that the double cover of $R$ is diffeomorphic to $S^4$~\cite{gompf-killing}. Akbulut's construction from~\cite{Akbulut-fake-4-manifold} can be modified to use other Cappell-Shaneson $\RP^4$s, some of which are not known to have standard double covers. Akbulut showed that $P'$ is homotopy equivalent to $Q$. As with~$P$, the classification result of Wang~\cite{Wang-nonorientable} shows that~$P'$ is homeomorphic to~$Q$. Akbulut used explicit moves on the handle decompositions to show that~$P'$ is not diffeomorphic to $Q$, reducing the problem to the fact that the Cappell-Shaneson exotic $\RP^4$ from \cref{subsection:row-6} is not diffeomorphic to $\RP^4$. In has been claimed~\citelist{\cite{Akbulut-on-fake}*{Theorem~1}\cite{akbulut-fake-gluck}} that $P$ and $P'$ are diffeomorphic, but a proof has so far not appeared.

Another construction of a manifold homeomorphic but not diffeomorphic to $Q$ was given by Fintushel-Stern in~\cite{FS-fake}, using the technology of~\cite{Fintushel-Stern-exotic-free}. By surgering an exceptional fibre of the $(3,5,19)$-Brieskorn sphere, they constructed $K$, a homology $S^2\times S^1$. They then formed $X$, the mapping cylinder of the quotient map $K\to K/t$ where $t$ is the free involution contained in the $S^1$-action on $K$, as in~\cref{subsection:row-4,subsection:row-6}. To finish the construction Fintushel and Stern showed that $K$ is the boundary of a homotopy $(S^1\times D^3)\#(S^2\times S^2)$, whose union with $X$ is the desired manifold~$M$. Using the invariant defined in~\cite{Fintushel-Stern-exotic-free}, they showed that $M$ is not smoothly $s$-cobordant to $Q$. They also showed using the handle decomposition, and the fact that $t$ is isotopic to the identity, that the double cover of $M$ is diffeomorphic to the standard $(S^1 \times S^3) \# \bighash{2} S^2\times S^2$. It is not known whether Akbulut's $P$ is diffeomorphic to $M$.

\subsection{Kwasik-Schultz manifolds homotopy equivalent to \texorpdfstring{$L \times S^1$}{LxS1}}\label{subsection:row-13}

The existence of these manifolds is the content of \cref{intro-thm-1} from the introduction, which was first proven in~\cite{Kwasik-Schultz}*{Theorem~1.2}. We restate the theorem and give an original proof below.

\begin{theorem}\label{intro-thm-1-repeated}
Let $M := L \times S^1$, where $L$ is a lens space $L_{p,q}$ with $p \geq 2$, $1\leq q < p$, and $(p,q)=1$. Then there is an infinite collection of closed, orientable, topological $4$-manifolds $\{M_i\}_{i=1}^\infty$, that are all simple homotopy equivalent to $M$ but pairwise not homeomorphic.
\end{theorem}

\begin{proof}
The proof will use the simple surgery exact sequence.
The simple $L$-group satisfies
\[L_5^s(\Z[\Z/p \times \Z]) \cong \Z^r \oplus (\text{torsion}),\] where \[r = \begin{cases}
  (p+1)/2 & p \text{ odd} \\
  (p+2)/2 & p \text{ even}.
\end{cases}\]
In both cases $r>1$ since $p \geq 2$.
  To compute these $L$-groups, first use Shaneson splitting~\cite{Shaneson-splitting} to obtain \[L_5^s(\Z[\Z/p \times \Z]) \cong L_5^s(\Z[\Z/p]) \oplus L_4^h(\Z[\Z/p]).\] Then $L_4^h(\Z[\Z/p]) \cong \Z^r \oplus T$, where $T$ is a torsion group and the free part is detected by a multi-signature invariant: see \cites{Bak-even-L-groups-of-odd-order-groups,Bak-BAMS} for $p$ odd,  \cite{Bak-computation-L-groups}*{Theorem~2} for $p=2^k$, and \cite{Hambleton-Taylor-guide}*{p.~227~and~Proposition~12.1} for the deduction of the general case. On the other hand $L_5^s(\Z[\Z/p])=0$, as shown in~\cites{Bak-odd-L-groups-of-odd-order-groups,Bak-BAMS} and ~\cite{Hambleton-Taylor-guide}*{Theorem~10.1} for $p$ odd, \cite{Bak-computation-L-groups}*{Theorem~7} for $p=2^k$, and again \cite{Hambleton-Taylor-guide}*{p.~227~and~Proposition~12.1} for general~$p$.

Since $\Z/p \times \Z$ is a good group, the simple surgery sequence is exact.
The normal maps $\mathcal{N}(M \times [0,1], M \times \{0,1\})$ are given by the direct sum of $H^2(M \times [0,1],M\times \{0,1\};\Z/2)$ and
\[H^4(M \times [0,1],M\times \{0,1\};\Z)\cong H_1(M\times[0,1];\Z)\cong \Z\oplus \Z/p,\]
as in~\eqref{eq:cohomology-normalinvariants}.
In particular, the normal maps have rank $1$. Hence the quotient
\[L_5^s(\Z[\Z/p \times \Z])/ \sigma(\mathcal{N}(M \times [0,1],M\times \{0,1\}))\]
is infinite. By exactness this quotient acts freely on the structure set $\mathcal{S}^s(M)$, and so the structure set of~$M$ is also infinite.
In order to complete the proof, we need to consider the manifold set:
\[\mathcal{M}(M) := \{N \text{ a closed 4-manifold} \mid N \simeq_s M\}/\text{homeomorphism.}\]
This set is isomorphic to the simple structure set $\mathcal{S}^s(M)$ modulo the action of the simple homotopy self-equivalences of $M$. We will show that the group $\hAut(M)$ of homotopy classes of homotopy self-equivalences of $M$ is finite in \cref{lemma:hom-self-equiv-finite} below.  It follows that the group of simple homotopy self-equivalences $\hAut^s(M)$ is also finite.   Then $\mathcal{M}(M)$ is the quotient of an infinite set $\mathcal{S}^s(M)$ by a finite group, so is again infinite. The elements of $\mathcal{M}(M)$ comprise the manifolds $\{M_i\}_{i=1}^\infty$ in the theorem statement.
\end{proof}

\begin{itemize}
\item The elements of $\mathcal{M}(M)$ are orientable and have nontrivial fundamental group isomorphic to $\Z/p\times \Z$.
\item They are in general not known to be smoothable, but their Kirby-Siebenmann invariants vanish because they are all bordant to the smooth manifold $M$.  For $L'$ homotopy equivalent, but not homeomorphic, to $L$, the smooth 4-manifold $L' \times S^1$ lies in $\mathcal{M}(M)$. In particular, we know that $L'\times S^1$ is simple homotopy equivalent to $M$ by \eqref{it:lens-4}. However, such examples account for at most finitely many of the elements of $\mathcal{M}(M)$.   Therefore the smooth equivalence relations are not applicable in general.
\item As they lie in the same simple structure set, they are all homotopy equivalent and simple homotopy equivalent to one another, and therefore in particular all have equal (vanishing) Euler characteristics.
\item Since they are obtained from the action of $L_5^s(\Z[\Z/p \times \Z])$, the elements of $\mathcal{M}(M)$ are stably homeomorphic and $\CP^2$-stably homeomorphic.
\item The elements of $\mathcal{M}(M)$ are by definition pairwise non-homeomorphic. The cardinality of $\mathcal{M}(M)$ was first shown to be infinite in~\cite{Kwasik-Schultz}*{Theorem~1.2}. As a result, since $\Z/p\times \Z$ is a good group, they are also not topologically $s$-cobordant, by the $s$-cobordism theorem (\cref{thm:scob}).
\item An infinite subset of the manifolds in $\mathcal{M}(M)$ are in addition not topologically $h$-cobordant to one another. To see this we argue as follows.
 Infinitely many of the elements of $L_5^s$ that we used, namely those in the $\Z^r$ summand detected by multisignatures, are nontrivial under the forgetful map $L_5^s(\Z[\Z/p \times \Z]) \to L_5^h(\Z[\Z/p \times \Z]) \cong \oplus^r \Z$.  
This can be seen directly from the definition of multisignatures or by observing that the Rothenberg exact sequence \cite{Shaneson-splitting}*{Proposition~4.1} implies that the kernel of the map $L^s_n(R)\to L^h_n(R)$ is 2-torsion for every ring with involution $R$ and every $n\in\Z$.
It follows that the quotient of these elements by the image of the $\Z$ factor in the normal invariants,
\[L_5^h(\Z[\Z/p \times \Z]) / \sigma(\mathcal{N}(M\times [0,1],M\times \{0,1\})) \cong \oplus^{r-1} \Z,\]
 also act nontrivially on the homotopy structure set $\mathcal{S}^h(M)$. As above $r \geq 2$ so this is infinite. Recall that the equivalence relation defining this set is topological $h$-cobordism over ~$M$.  The quotient of $\mathcal{S}^h(M)$ by the group of homotopy self-equivalences of $M$ is the manifold $h$-cobordism set:
 \[\mathcal{M}^h(M) := \{N \text{ a closed 4-manifold} \mid N \simeq M\}/h\text{-cobordism}.\]
 As before, the homotopy self-equivalences form a finite group, so can only identify finitely many of the manifolds. It follows that there is an infinite subset of $\mathcal{M}(M)$ represented by manifolds that determine distinct elements of $\mathcal{M}^h(M)$, and are therefore pairwise not topologically $h$-cobordant.
\end{itemize}

These examples contrast with Teichner's examples in~\cref{subsection:row-4} in that we have infinitely many, and the stable homeomorphism statuses are different.
A similar phenomenon to the manifolds in $\mathcal{M}(M)$ arises for manifolds homotopy equivalent to $\RP^4 \# \RP^4$~\cite{BDK-07}, except that these manifolds are of course nonorientable.

We next prove the following lemma, which was used in the proof of \cref{intro-thm-1-repeated}.

\begin{lemma}\label{lemma:hom-self-equiv-finite}
The group of homotopy self-equivalences $\hAut(M)$ of $M$ is a finite group.
\end{lemma}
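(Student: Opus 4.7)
The plan is to apply the Hambleton-Kreck braid of exact sequences from \cite{Hambleton-Kreck-HSEs}, which is designed to estimate $\hAut$ for closed $4$-manifolds such as $L_{p,q}\times S^1$ and is already referenced in the discussion following \cref{intro-thm-2}. More concretely, one can proceed by decomposing a self-homotopy equivalence $f\colon M\to M$ using the product target structure as $f = (f_L, f_S)$ with $f_L\colon M\to L$ and $f_S\colon M\to S^1$, and analysing each factor via obstruction theory on the respective Postnikov towers.

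First, observe that $\Aut(\pi_1(M))=\Aut(\Z/p\times\Z)$ is finite, of order $2p\varphi(p)$: any automorphism preserves the torsion subgroup $\Z/p$ and acts on the free quotient as $\pm 1$, with the image of a generator of $\Z$ in $\Z/p$ arbitrary. Hence there are only finitely many possibilities for $\pi_1(f)$. The $\Z$-component of $\pi_1(f)$ determines $f_S$ up to homotopy since $[M,S^1] = H^1(M;\Z)\cong \Z$ and $f_S$ is classified by the image of the generator of $\Z\subset \pi_1(M)$.

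Next, analyse $f_L\colon M\to L$ via the Postnikov tower of $L$. Because the universal cover of $L$ is $S^3$, we have $\pi_2(L) = 0$, $\pi_3(L)\cong\Z$, and $\pi_k(L) = \pi_k(S^3)$ is finite for $k\geq 4$. Given the $\pi_1$-map of $f_L$, lifts through the principal fibration $K(\Z,3)\to L[3]\to L[1]=B\Z/p$ differ by elements of $H^3(M;\Z)\cong \Z\oplus\Z/p$. The $\Z$-summand is cut down to finitely many choices by the requirement that $f$ induce $\pm 1$ on $\pi_3(M)\cong\Z$: two lifts whose difference has $\Z$-component $n$ induce $\pi_3$-maps differing by $pn$, since the Hurewicz map $\pi_3(M)\to H_3(M)\cong\Z$ is multiplication by $p$ (the covering map $S^3\to L$ has degree $p$ on top homology). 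The $\Z/p$-summand contributes finitely many additional choices. At higher Postnikov stages $k\geq 4$, the obstructions lie in $H^k(M;\pi_k(L))$, which are finite since $\pi_k(L)$ is finite and $H^k(M;-)=0$ for $k>4$.

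Combining these finite counts, we conclude $\hAut(M)$ is finite. The main obstacle is controlling the $\Z$-summand in $H^3(M;\Z)$, which would otherwise contribute infinitely many homotopy classes of lifts of $f_L$; the constraint that $f$ act as $\pm 1$ on $\pi_3(M)$ resolves this. The Hambleton-Kreck braid provides the cleanest framework organising precisely this Postnikov bookkeeping into exact sequences with finite terms, from which $|\hAut(M)|<\infty$ follows.
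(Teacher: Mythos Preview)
Your argument is correct, but despite the opening and closing sentences it does \emph{not} actually use the Hambleton--Kreck braid; what you wrote is a self-contained obstruction-theoretic count exploiting the product decomposition $M=L\times S^1$. The paper's proof genuinely invokes the braid: it places $\hAut(M)$ in the exact sequence
\[
\widehat{\Omega}_5^{\Spin}(B,M)\to \hAut(M)\to \hAut(B),
\]
identifies $\hAut(B)=\Aut(\Z/p\times\Z)$ as finite, and then bounds the spin bordism term $\widehat{\Omega}_5^{\Spin}(B,M)$ via the Atiyah--Hirzebruch spectral sequence for $\Omega_*^{\Spin}$, using the long exact sequence of the pair $(B,M)$. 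No Postnikov tower of $L$ enters the paper's argument.

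Your route is more elementary---no bordism, no spectral sequences---and gives an explicit bound of order roughly $p^2\varphi(p)$ directly. It works precisely because $M$ is a product with an aspherical factor $S^1$ and $L$ is a simple space with $\pi_2=0$ and finite higher homotopy above degree~$3$; the key observation, that the $\Z$-summand of $H^3(M;\Z)$ is cut down to finitely many lifts by the constraint $f_*=\pm 1$ on $\pi_3$ (via the degree-$p$ Hurewicz map $\pi_3(M)\to H_3(M)$), is exactly right. The paper's approach is less elementary but more portable: since it packages everything into bordism groups, it transfers without change to the setting of \cref{subsection:row-14}, where explicit numerical upper bounds on $|\hAut(M_r)|$ in terms of~$r$ are needed. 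Your product-specific argument would also give such bounds, but the paper chose the bordism route because it is reused verbatim there. You should drop the framing that your argument is ``the Hambleton--Kreck braid made concrete''; it is a genuinely different (and valid) proof.
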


\begin{proof}[Sketch of proof]
  For this we will use the braid of exact sequences from Hambleton-Kreck~\cite{Hambleton-Kreck-HSEs}*{p.~148}, which applies since $M$ is spin.
This braid in particular fits the group of homotopy self-equivalences $\hAut(M)$ into an exact sequence~\cite{Hambleton-Kreck-HSEs}*{Corollary~2.13},
\begin{equation}\label{eq:exact}
\begin{tikzcd}
\widehat{\Omega}_5^{\Spin}(B,M) \arrow[r]	&\hAut(M)\arrow[r]	&\hAut(B)
\end{tikzcd}
\end{equation}
sandwiched between the homotopy automorphisms $\hAut(B)$ of the Postnikov 2-type $B$, and a spin bordism group $\wh{\Omega}_5^{\Spin}(B,M)$ that we shall define below.

Since $\pi_2(M)=0$, the Postnikov 2-type of $M$ is $B := B(\Z/p \times \Z)$. The homotopy classes of homotopy equivalences of $B$ are therefore isomorphic to the automorphisms of the group $\Z/p \times \Z$. This group of automorphisms is a finite group.

Let $\wh{\Omega}_4^{\Spin}(M) \subseteq \Omega_4^{\Spin}(M)$ denote the subset of bordism classes $(X,f)$ where the reference map $f \colon X \to M$ has degree $0$, let $\partial \colon \Omega_5^{\Spin}(B,M) \to \Omega_4^{\Spin}(M)$ be the boundary map in the long exact sequence of the pair, and let
$\wh{\Omega}_5^{\Spin}(B,M) \subseteq \Omega_5^{\Spin}(B,M)$ be $\partial^{-1}(\wh{\Omega}_4^{\Spin}(M))$.
By~\cite{Hambleton-Kreck-HSEs}*{Lemma~2.2}, there is a long exact sequence
\[
\begin{tikzcd}
\Omega_5^{\Spin}(M) \arrow[r] &\Omega_5^{\Spin}(B)  \arrow[r]	&\wh{\Omega}_5^{\Spin}(B,M) \arrow[r]	&\wh{\Omega}_4^{\Spin}(M)  \arrow[r]   &\Omega_4^{\Spin}(B).
\end{tikzcd}\]
The four non-relative spin bordism groups, and the first and last maps in the sequence,  can be computed using the (natural) Atiyah-Hirzebruch spectral sequence for the generalised homology theory of spin bordism. We omit the details, because very similar details will appear below in the proof of \cref{lemma:bound-on-size-of-homotopy-auts}. There we will restrict to $p$ a power of $2$, but  the computation that both the cokernel of the map $\Omega_5^{\Spin}(M) \to \Omega_5^{\Spin}(B)$ and the kernel of the map $\wh{\Omega}_4^{\Spin}(M)  \to   \Omega_4^{\Spin}(B)$ are finite groups is similar for all~$p \geq 2$.  To avoid essentially repeating ourselves, we only give the details in the proof in \cref{subsection:row-14} below, since in that case the result is new, and more precise upper bounds are required.
It follows that $\wh{\Omega}_5^{\Spin}(B,M)$ is finite. Therefore~\eqref{eq:exact} shows that $\hAut(M)$ is finite, as desired.
\end{proof}


\subsection{\texorpdfstring{Simple homotopy equivalent, $h$-cobordant $4$-manifolds that are not $s$-cobordant}{Simple homotopy equivalent, h-cobordant 4-manifolds that are not s-cobordant}}\label{subsection:row-14}

We construct arbitrarily large collections of closed, orientable, topological 4-manifolds that are simple homotopy equivalent and $h$-cobordant but not topologically $s$-cobordant. This will prove \cref{intro-thm-2}, which we restate below. We will employ  the same scheme as in the previous subsection. The manifolds in each collection will be simple homotopy equivalent and $h$-cobordant to a fixed 4-manifold $L_{2^r,1} \times S^1$, for some $r$.  We will show that by making $r$ large enough we can obtain a collection of 4-manifolds of any given size, with the following properties.

\begin{itemize}
  \item They are orientable and have nontrivial fundamental group isomorphic to $\Z/{2^r}\times \Z$ for some $r\geq 8$.
  \item They are all simple homotopy equivalent and topologically $h$-cobordant to one another. As a result, they are homotopy equivalent, stably homeomorphic, and $\CP^2$-stably homeomorphic. They are are pairwise not topologically $s$-cobordant and therefore not homeomorphic.
  \item Since they are homotopy equivalent they all have vanishing Euler characteristic, the same as $L_{2^r,1} \times S^1$.
  \item They are all stably homeomorphic and $\CP^2$-stably homeomorphic.
  \item  We do not know whether they are smoothable, and therefore the smooth questions are not applicable.
\end{itemize}

\begin{reptheorem}{intro-thm-2}
  For every $n\geq 1$, there is a collection ~$\{N_i\}_{i=1}^n$ of closed, orientable, topological 4-manifolds, that are all simple homotopy equivalent and $h$-cobordant to one another, but which are pairwise not $s$-cobordant.
\end{reptheorem}

\begin{proof}
Let $M_r := L_{2^r,1} \times S^1$, for $r\geq 1$.
Let $\pi_r := \Z/2^r\times \Z$, and let $G_r:= \Z/2^r$. The proof will again use the surgery sequence, both with the $h$ and $s$ decorations. We begin by investigating the individual terms.
Shaneson splitting~\cite{Shaneson-splitting} shows that
\[L_5^s(\Z\pi_r) \cong L_5^s(\Z G_r) \oplus L_4^h(\Z G_r) \text{ and } L_5^h(\Z\pi_r) \cong L_5^h(\Z G_r) \oplus L_4^p(\Z G_r).\]
Then by \cite{Bak-computation-L-groups}*{Theorem~7}, we know
\[L_5^s(\Z G_r) = 0 = L_5^h(\Z G_r).\]
Here we use that $s=0$, in the notation of that theorem (this is a different $s$ to the $s$-decoration of the $L$-groups).
By \cite{Bak-computation-L-groups}*{Theorem~1} we have that
\[L_4^p(\Z G_r) = \Z^{m(r)}.\]
where $m(r) := 2^{r-1}+1$.
By \cite{Bak-computation-L-groups}*{Theorem~2} and \cite{CS-torsion-in-L-groups}*{Theorem~B}:
\[L_4^h(\Z G_r) = \Z^{m(r)} \oplus (\Z/2)^{n(r)}\]
where
\[n(r) :=  \lfloor 2(2^{r-2} +2)/3 \rfloor - \lfloor r/2 \rfloor -1.\]
Putting this all together we have:
\[L_5^s(\Z\pi_r) \cong \Z^{m(r)} \oplus (\Z/2)^{n(r)} \text{ and } L_5^h(\Z\pi_r) \cong \Z^{m(r)}.\]
The kernel of the forgetful map $L_5^s(\Z\pi_r) \to L_5^h(\Z\pi_r)$ is the torsion summand
\[K := \ker (L_5^s(\Z\pi_r) \to L_5^h(\Z\pi_r)) \cong (\Z/2)^{n(r)},\]
since $L^h_5(\Z\pi_r)$ is torsion free and the kernel of the map $L^s_n(R)\to L^h_n(R)$ is 2-torsion, for every ring with involution $R$ and every $n\in\Z$ by the Rothenberg exact sequence \cite{Shaneson-splitting}*{Proposition~4.1}.
 The elements in $K$ act on the simple structure set of $M_r$, producing topological manifolds that are stably homeomorphic and simple homotopy equivalent to $M_r$.
We can compute the normal maps as
\begin{align*}
\mathcal{N}(M_r \times [0,1],M_r \times \{0,1\}) &\cong H^4(M_r \times [0,1],M_r \times \{0,1\};\Z) \oplus H^2(M_r \times [0,1],M_r \times \{0,1\};\Z/2) \\
&\cong \Z \oplus \Z/2^r \oplus \Z/2.
\end{align*}
We have a direct sum because there is a 5-connected map $\G/\TOP \to K(\Z,4) \times K(\Z/2,2)$~\cite{Kirby-Taylor}.
The $\Z$ summand is detected by the ordinary signature, and in particular it maps to one of the multisignature summands in $\Z^{m(r)}\subseteq L_5^s(\Z\pi_r)$, under the surgery obstruction map. The torsion summand $\Z/2^r \oplus \Z/2$ could map to the torsion elements in $L_5^s(\Z\pi_r)$.  But at least a summand of $L_5^s(\Z\pi_r)$, one isomorphic to $(\Z/2)^{n(r)-2}$, acts nontrivially on the simple structure set. Note that $|(\Z/2)^{n(r)-2}| = 2^{n(r) -2}$.

Still, it might be the case that some of the elements of the simple structure set $\mathcal{S}^s(M_r)$ obtained by this action of $L_5^s$ are identified by the action of the simple homotopy self-equivalences $\hAut^s(M_r)$.  Note that $|\hAut^s(M_r)| \leq |\hAut(M_r)|$.  We will show the following lemma.
 \begin{lemma}\label{lemma:bound-on-size-of-homotopy-auts}
 Let $r \geq 8$. Then
    \[ 2^{n(r)-2} / |\hAut(M_r)| > 1.\]
    Moreover, for any $k$ there exists an $r$ with $2^{n(r)-2} / |\hAut(M_r)| > k.$
\end{lemma}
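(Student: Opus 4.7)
The plan is to produce an upper bound on $|\hAut(M_r)|$ that is polynomial in $2^r$, then to compare it with $n(r) \geq 2^{r-1}/3 - r/2 - 1$, which gives that $2^{n(r)-2}$ grows doubly exponentially in $r$. The ratio will therefore tend to infinity, establishing the second (asymptotic) assertion of the lemma. The specific threshold $r \geq 8$ for the first assertion will then follow from making the bounds explicit and computing, with some sharpening required at the boundary.

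I would begin by applying the Hambleton-Kreck exact sequence
\[\wh{\Omega}_5^{\Spin}(B,M_r) \to \hAut(M_r) \to \hAut(B)\]
from the sketch of \cref{lemma:hom-self-equiv-finite}, with $B = B\pi_r = B(\Z/2^r \times \Z)$ the Postnikov 2-type of $M_r$ (noting that $\pi_2(M_r) = 0$ and $M_r$ is spin). This reduces the problem to bounding the two outer terms. Since $\pi_r$ is abelian, $\hAut(B) \cong \Out(\pi_r) = \Aut(\pi_r)$. The torsion subgroup $\Z/2^r \leq \pi_r$ is characteristic, and an automorphism is determined by its restriction to $\Z/2^r$ together with the image of the $\Z$-generator, which can be any element of $\Z/2^r$ times $\pm 1$. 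Hence
\[|\hAut(B)| = |(\Z/2^r)^\times| \cdot |\Z/2^r| \cdot 2 = 2^{2r}.\]

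Next I would bound $|\wh{\Omega}_5^{\Spin}(B,M_r)|$ using the long exact sequence
\[\Omega_5^{\Spin}(M_r) \to \Omega_5^{\Spin}(B) \to \wh{\Omega}_5^{\Spin}(B,M_r) \to \wh{\Omega}_4^{\Spin}(M_r) \to \Omega_4^{\Spin}(B)\]
together with the Atiyah-Hirzebruch spectral sequence for spin bordism. A Künneth calculation using $H_*(B\Z/2^r; \Z/2)$ of dimension one in each degree, and $H_*(B\Z/2^r; \Z)$ equal to $\Z/2^r$ in each odd positive degree, gives that the contributing $E^2_{p,q}$-terms for $\Omega_5^{\Spin}(B)$ are $H_1(B;\Z) = \Z/2^r \oplus \Z$, $H_3(B;\Z/2) \cong (\Z/2)^2$, $H_4(B;\Z/2) \cong (\Z/2)^2$, and $H_5(B;\Z) = \Z/2^r$. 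Since $c_{M_r}$ induces an isomorphism on $\pi_1 \cong H_1$, the free $\Z$-summand of $\Omega_5^{\Spin}(B)$ is in the image of $\Omega_5^{\Spin}(M_r)$, so the cokernel is finite, of order at most $2^{2r+4}$. A parallel computation using $H_*(M_r; \Z/2)$ shows that $\wh{\Omega}_4^{\Spin}(M_r)$ has torsion subgroup of order at most $16$, and the $\Z$-summand (from $H_0$, i.e., the signature) maps injectively into $\Omega_4^{\Spin}(B)$, so the kernel is bounded by an absolute constant. Combining, $|\wh{\Omega}_5^{\Spin}(B,M_r)| \leq 2^{2r + O(1)}$ and hence
\[|\hAut(M_r)| \leq 2^{2r + O(1)} \cdot 2^{2r} \leq 2^{4r + C}\]
for an explicit constant $C$.

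Comparing with $n(r) \geq 2^{r-1}/3 - r/2 - 1$, the ratio $2^{n(r)-2}/|\hAut(M_r)| \geq 2^{2^{r-1}/3 - 4r - O(1)}$ is unbounded, which gives both conclusions asymptotically. The main obstacle lies in the first assertion of the lemma at the stated threshold $r = 8$: the crude constants $C$ produced above are slightly too large, so one must sharpen the bounds, either by tracking the $d_2 = Sq^2$ differentials in the Atiyah-Hirzebruch spectral sequences (which kill much of the $\Z/2$-cohomology contributions), or by observing that the image of $\hAut(M_r) \to \hAut(B)$ lies in the subgroup preserving the (twisted) fundamental class and the spin structure carried by $M_r$. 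Either sharpening shaves enough off the exponent to make $2^{n(r)-2}$ beat the resulting bound already at $r = 8$.
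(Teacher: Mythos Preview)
Your approach is essentially the same as the paper's: use the Hambleton--Kreck sequence to bound $|\hAut(M_r)|$ by $|\hAut(B_r)| \cdot |\wh{\Omega}_5^{\Spin}(B_r,M_r)|$, compute $|\hAut(B_r)| = 2^{2r}$, and bound the relative bordism group via the long exact sequence and the Atiyah--Hirzebruch spectral sequence. The asymptotic statement follows exactly as you say.

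The gap is in the threshold $r \geq 8$. Your cokernel bound of $2^{2r+4}$ yields $|\hAut(M_r)| \leq 2^{4r+8}$, and for $r=8$ one has $n(8)-2 = 37 < 40 = 4r+8$, so the inequality fails. Your proposed remedies (tracking $\Sq^2$ differentials, or cutting down the image in $\hAut(B_r)$) are unnecessary and not what the paper does. The sharpening is much simpler and comes entirely from naturality of the spectral sequence: since $B_r$ is obtained from $M_r$ by attaching cells of dimension $\geq 4$, the map $H_k(M_r;A) \to H_k(B_r;A)$ is an isomorphism for $k \leq 2$ and a surjection for $k=3$. Hence the $E^2_{1,4}$ and $E^2_{3,2}$ terms contribute nothing to the cokernel of $\Omega_5^{\Spin}(M_r) \to \Omega_5^{\Spin}(B_r)$; for $E^2_{4,1} = H_4(B_r;\Z/2) \cong (\Z/2)^2$ the mod~2 fundamental class of $M_r$ hits one factor, leaving at most one $\Z/2$; and $E^2_{5,0} = H_5(B_r;\Z) \cong \Z/2^r$. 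So the cokernel has order at most $2^{r+1}$, not $2^{2r+4}$. With the kernel bound of $2^4$ this gives $|\wh{\Omega}_5^{\Spin}(B_r,M_r)| \leq 2^{r+5}$ and $|\hAut(M_r)| \leq 2^{3r+5}$. One then checks directly that $n(r)-2-(3r+5) > 0$ for $r \geq 8$.
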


Every element $M_r(\kappa) := \kappa \cdot [\Id \colon M_r \to M_r] \in \mathcal{S}^s(M_r)$ arising from the action of an element $\kappa \in K$ maps trivially to the homotopy structure set $\mathcal{S}^h(M_r)$, by the diagram
\[
\begin{tikzcd}
L_5^s(\Z\pi_r) \ar[r,"W"] \ar[d] & \mathcal{S}^s(M_r) \ar[d] \\
L_5^h(\Z\pi_r) \ar[r,"W"] & \mathcal{S}^h(M_r).
\end{tikzcd}
\]
The $M_r(\kappa)$ are all therefore $h$-cobordant to $M_r$.  Since they arise from the action of the simple $L$-group $L_5^s(\Z\pi_r)$, they are all simple homotopy equivalent and stably homeomorphic to one another.
By \cref{lemma:bound-on-size-of-homotopy-auts} there is more than one orbit of $\{M_r(\kappa)\}_{\kappa \in K} / \hAut^s(M_r)$, and these are not $s$-cobordant and therefore not homeomorphic manifolds.
Moreover, for a given $k$ we can choose $r$ so that there are at least $k$ orbits, and therefore we find arbitrarily large collections.
\end{proof}

Now we prove \cref{lemma:bound-on-size-of-homotopy-auts}.

\begin{proof}[Proof of \cref{lemma:bound-on-size-of-homotopy-auts}]
As in \cref{subsection:row-13}, we use the braid from~\cite{Hambleton-Kreck-HSEs}. There we claimed that $\hAut(L_{p,q}\times S^1)$ is finite for any lens space $L_{p,q}$ with $p$ odd.  Now we claim the same when $p=2^r$, and moreover in this case we compute an explicit upper bound, in terms of~$r$, for the order of $\hAut(L_{p,q}\times S^1)$.
The braid includes the exact sequence~\cite{Hambleton-Kreck-HSEs}*{Corollary~2.13}
\begin{equation}\label{eq:SES}
\widehat{\Omega}_5^{\Spin}(B_r,M_r) \to \hAut(M_r) \to \hAut(B_r),
\end{equation}
so we need upper bounds for the cardinalities of $\widehat{\Omega}_5^{\Spin}(B_r,M_r)$ and $\hAut(B_r)$, where $\hAut(B_r)$ denotes the set of homotopy self-equivalences of the Postnikov 2-type $B_r$ up to homotopy. The spin bordism group $\wh{\Omega}_5^{\Spin}(B_r,M_r)$ also appeared in the proof of \cref{lemma:hom-self-equiv-finite}, and we refer the reader there for the definition.

First we compute $\hAut(B_r)$. Since $\pi_2(M_r)=0$, the Postnikov 2-type $B_r$ is given by $B\pi_r$. Therefore, $\hAut(B_r)\cong \Aut(\Z/2^r\times \Z)$. We will now show that $|\hAut(B_r)| = 2^{2r}$. An arbitrary endomorphism of $\Z/2^r\times \Z$ maps $(1,0)$ to $(a, 0)$ and $(0,1)$ to $(b,z)$, for some $a,b\in \Z/2^r$ and $z\in \Z$. For an automorphism, we must have that $z=\pm 1$ and $a$ must be a generator of $\Z/2^r$. Hence there are $2^{2r}=2\cdot 2^{r-1}\cdot 2^{r}$ allowed choices for $z,a$ and $b$.

To find an upper bound for $|\wh{\Omega}_5^{\Spin}(B_r,M_r)|$ we use the exact sequence~\cite{Hambleton-Kreck-HSEs}*{Lemma~2.2}
\begin{equation}\label{eq:LES}
\Omega_5^{\Spin}(M_r) \to \Omega_5^{\Spin}(B_r)  \to \wh{\Omega}_5^{\Spin}(B_r,M_r) \to \wh{\Omega}_4^{\Spin}(M_r)  \to   \Omega_4^{\Spin}(B_r).
\end{equation}
We investigate the bordism groups using the Atiyah-Hirzebruch spectral sequence.
The sequence we need, for $X\in\{M_r,B_r\}$, is
\[E^2_{p,q} = H_p(X;\Omega_q^{\Spin}) \Rightarrow \Omega_{p+q}^{\Spin}(X).\]
In the range of interest $0\leq q\leq 5$, we have:
\[
\Omega_q^{\Spin} \cong
\left\{\begin{array}{lr}
        \Z, & \text{for } q=0,4,\\
        \Z/2, & \text{for } q=1,2,\\
        0, & \text{for } q=3,5.\\
        \end{array}\right.
\]
We also need the homology of~$M_r$, which by the K\"{u}nneth theorem with $\Z/2$-coefficients is as follows:
\[H_k(M_r;\Z/2) \cong \begin{cases}
  \Z/2, & \text{for }k=0,4, \\
  (\Z/2)^2, & \text{for }k=1,2,3, \\
  0, & \text{otherwise.}
\end{cases}\]
Additionally $H_1(M_r;\Z) \cong \Z/2^r \oplus \Z$.  Since $B_r = B\pi_r$ can be constructed from $M_r$ by adding cells of dimension four and higher, for $A\in\{\Z/2,\Z\}$ the induced map
\[H_k(M_r;A) \to H_k(B_r;A)\]
is an isomorphism for $k=0,1,2$ and a surjection for $k=3$.
Finally we will need that $H_5(B_r;\Z) \cong \Z/2^r$.

Now we use this homology information together with the spectral sequences to obtain an upper bound for the cardinality of the cokernel of the map $\Omega_5^{\Spin}(M_r) \to \Omega_5^{\Spin}(B_r)$ from~\eqref{eq:LES}. The map $M_r\to B_r$ induces maps between each page of the spectral sequences computing $\Omega_5^{\Spin}(M_r)$ and $\Omega_5^{\Spin}(B_r)$.
The nonzero terms $E^2_{p,q}$ on the $E^2$ page with $p+q=5$ are as follows:
\[H_1(-;\Z), H_3(-;\Z/2) , H_4(-;\Z/2), \text{ and } H_5(-;\Z).\]
The maps $H_1(M_r;\Z) \to H_1(B_r;\Z)$ and $H_3(M_r;\Z/2) \to H_3(M_r;\Z/2)$ are onto as explained above, so by naturality of the spectral sequence these terms do not contribute to the cokernel.
The mod 2 fundamental class in $H_4(M_r;\Z/2)\cong \Z/2$ maps nontrivially to $H_4(B_r;\Z/2) \cong (\Z/2)^2$, so possibly one $\Z/2$ could contribute to the cokernel (whether or not it does so depends on differentials  which we shall not take into account).  The only other contribution to the cokernel comes from the term $H_5(B_r;\Z) \cong \Z/2^r$.
As a result the cokernel of $\Omega_5^{\Spin}(M_r) \to \Omega_5^{\Spin}(B_r)$
has at most~$2^{r+1}$ elements.

Next we find an upper bound on the size of
$\ker\big(\wh{\Omega}_4^{\Spin}(M_r)  \to   \Omega_4^{\Spin}(B_r)\big)$. We do this by considering the composition
\[\wh{\Omega}_4^{\Spin}(M_r)  \to \Omega_4^{\Spin}(M_r) \to   \Omega_4^{\Spin}(B_r).\]
Consider the Atiyah-Hirzebruch spectral sequence computing $\Omega_4^{\Spin}(M_r)$. The nonzero terms $E^2_{p,q}$ on the $E^2$ page with $p+q=4$ are
\[H_0(M_r;\Z), H_2(M_r;\Z/2) , H_3(M_r;\Z/2), \text{ and } H_4(M_r;\Z).\]
The map $H_0(M_r;\Omega_4^{\Spin}) \cong \Z \to H_0(B_r;\Omega_4^{\Spin}) \cong \Z$ is an isomorphism, as explained above.
The image of the inclusion $\wh{\Omega}_4^{\Spin}(M_r)  \to \Omega_4^{\Spin}(M_r)$ consists of elements with trivial image under the edge homomorphism $\Omega_4^{\Spin}(M_r)\to H_4(M_r;\Z) \cong E^2_{p,0}$ term on the $E^2$ page, since the latter map is given by the mapping degree times the fundamental class $[M_r]$.

It follows that the kernel of $\wh{\Omega}_4^{\Spin}(M_r)  \to   \Omega_4^{\Spin}(B_r)$ is generated by elements coming from the terms $H_2(M_r;\Z/2) \cong (\Z/2)^2$ and $H_3(M_r;\Z/2)\cong (\Z/2)^2$, and so the kernel has at most~$2^4$ elements.
Thus by~\eqref{eq:LES}, we see that
\[|\widehat{\Omega}_5^{\Spin}(B_r,M_r)| \leq 2^{r+1} \cdot 2^4 = 2^{r+5}.\]
It now follows from the sequence~\eqref{eq:SES} that
\[|\hAut(M_r)| \leq |\hAut(B_r)| \cdot |\widehat{\Omega}_5^{\Spin}(B_r,M_r)| \leq  2^{2r} \cdot 2^{r+5} = 2^{3r+5}.\]
An elementary calculation, recalling that $n(r) =  \lfloor 2(2^{r-2} +2)/3 \rfloor - \lfloor r/2 \rfloor - 1$, shows that for $r \geq 8$ we have
\[n(r) -2 - 3r-5 >0.\]
This implies that $2^{n(r)-2} / |\hAut(M_r)| \geq 2^{n(r)-2}/2^{3r-5} = 2^{n(r) - 2 - 3r - 5} > 1$, as desired.
In addition, note that $n(r) -2 - 3r-5 \to \infty$ as $r \to \infty$. It follows that for a given $k$, there exists an $r$ such that $2^{n(r)-2} / |\hAut(M_r)| \geq  2^{n(r) - 2 - 3r - 5} > k$.
 \end{proof}

\addcontentsline{toc}{section}{Table of counterexamples in 4-manifold topology}
\begin{center}
\begin{sidewaystable}
\ra{1.5}
    \begin{tabular}{@{}p{7mm}lp{5mm}p{5mm}p{5mm}p{2mm}p{7mm}p{7mm}p{7mm}p{7mm}p{7mm}p{7mm}p{7mm}p{7mm}p{7mm}p{7mm}p{7mm}p{7mm}p{7mm}p{7mm}}
        \toprule
        &	Examples	& \multicolumn{3}{c}{Properties} &	& \multicolumn{13}{c}{Equivalence relations}	&\\
        \cmidrule{3-5} \cmidrule{7-20}
   		&	&\tilt{smooth}	&\tilt{oriented}	&\tilt{$\pi_1=1$}	&	&\tilt{equal $\chi$}	 &\tilt{$S^2\times S^2$-stably homeo.}	 &\tilt{$\CP^2$-stably homeo.}	&\tilt{$S^2\times S^2$-stably diffeo.}	 &\tilt{$\CP^2$-stably diffeo.}	&\tilt{homotopy equiv.}	 &\tilt{simple homotopy equiv.}	&\tilt{top.\ $h$-cobordant}	&\tilt{top.\ $s$-cobordant} &\tilt{smoothly $h$-cobordant}	&\tilt{smoothly $s$-cobordant}	 &\tilt{homeomorphic}	&\tilt{diffeomorphic}	 &\\
        \midrule
        \S\ref{subsection:row-1}& $S^4$ and $S^2\times S^2$		&\cmark	&\cmark	&\cmark	&&\xmark	&\cmark	 &\cmark	&\cmark	&\cmark	&\xmark	&\xmark & \xmark  & \xmark	 &\xmark	&\xmark	&\xmark	&\xmark	&\\
        \S\ref{subsection:row-2}& $S^2 \times S^2$ and $S^2 \mathbin{\wt{\times}} S^2$		& \cmark &\cmark	&\cmark	 &&\cmark	&\xmark	&\cmark	&\xmark	 &\cmark	 &\xmark	&\xmark	& \xmark  & \xmark &\xmark	&\xmark	&\xmark	 &\xmark	& \\
        \S\ref{subsection:row-3}& $\CP^2$ and $\star\CP^2$		& \xmark & \cmark	&\cmark	&&\cmark	&\xmark	 &\xmark	& n/a	& n/a	&\cmark	 &\cmark & \xmark  & \xmark	& n/a	& n/a	&\xmark	& n/a	&\\
        \S\ref{subsection:row-4}& $\RP^4\# \CP^2$ and $\mathcal{R}\#\star\CP^2$		&\cmark	&\xmark	 &\xmark	&&\cmark	&\cmark	& \cmark	 &\cmark	 &\cmark	&\cmark	 &\cmark	& \xmark  & \xmark &\xmark	 &\xmark	&\xmark	&\xmark	&\\
        \S\ref{subsection:row-5}& $K3 \# \RP^4$ and $\bighash{11} (S^2 \times S^2) \# \RP^4$		& \cmark	&\xmark	 &\xmark	&&\cmark	&\cmark	& \cmark	 &\xmark	& \cmark	 &\cmark	&\cmark	& \cmark & \cmark & \xmark	 &\xmark	&\cmark	&\xmark	&\\
        \S\ref{subsection:row-6}& $\RP^4$ and $R$		&\cmark	&\xmark	&\xmark	&&\cmark	&\cmark	&\cmark	 &\xmark	&\cmark	&\cmark	&\cmark	& \cmark & \cmark  & \xmark	&\xmark	 &\cmark	&\xmark	&\\
        \S\ref{subsection:row-7}& \makecell{$L_1\times S^1,\dots, L_k\times S^1,$ with \\ $L_i\simeq L_j$ but $L_i\not\cong L_j$ for $i\neq j$}
                		&\cmark	&\cmark	&\xmark	&&\cmark	& \cmark	 &\cmark	& \cmark	& \cmark &\cmark	 & \cmark	 & \xmark  & \xmark & \xmark & \xmark		& \xmark	 &\xmark	&\\
        \S\ref{subsection:row-8}& $E(1)$ and $E(1)_{2,3}$		& \cmark	&\cmark	&\cmark	&&\cmark	&\cmark	& \cmark	&\cmark	& \cmark	&\cmark	 &\cmark & \cmark & \cmark	&\cmark	&\cmark	&\cmark	&\xmark	&\\
         \S\ref{subsection:row-9}& $\bighash{3} E_8$ and $Le$	&\xmark	&\cmark	 &\cmark	&&\cmark	&\cmark	& \cmark	 & n/a	 & n/a & \xmark	 &\xmark & \xmark  & \xmark	& n/a	 & n/a	&\xmark	& n/a	&\\

        \S\ref{subsection:row-10}& Kreck--Schafer manifolds		&\cmark	&\cmark	&\xmark	&&\cmark	&\cmark	 &\cmark	&\cmark	&\cmark	&\xmark	&\xmark	 & \xmark &\xmark & \xmark & \xmark	&\xmark	&\xmark	&\\
         \S\ref{subsection:row-11}& \makecell{Teichner's $E\# E  \# \bighash{k} (S^2 \times S^2)$\\ and $\star E \# \star E  \# \bighash{k} (S^2 \times S^2)$} 
&\cmark	&\cmark	 &\xmark	&&\cmark	&\xmark	& \cmark	 &\xmark	 & \cmark & \cmark	 &\cmark & \xmark  & \xmark	& \xmark	 &\xmark	 &\xmark	 &\xmark	&\\
        \S\ref{subsection:row-12}& Akbulut's $P$ and $Q$		&\cmark	& \xmark	&\xmark	&&\cmark	& \cmark	& \cmark	& \xmark	& \cmark	 &\cmark	&\cmark & \cmark  & \cmark	&\xmark	& \xmark	&\cmark	&\xmark	&\\
		\S\ref{subsection:row-13}&  $\mathcal{M}(L_{p,q} \times S^1)$, $p$ odd, $\infty$ set    		
&?	&\cmark	 &\xmark	&&\cmark	&\cmark	 & \cmark	 & n/a	& n/a & \cmark	 &\cmark & \xmark  & \xmark	& n/a	& n/a	& \xmark	 & n/a	&\\
		\S\ref{subsection:row-14}& $\{M_r(\kappa)\}_{\kappa \in K}$    		
&?	&\cmark	&\xmark	    &&\cmark	&\cmark	 & \cmark	 & n/a	& n/a & \cmark	 &\cmark & \cmark  & \xmark	& n/a	& n/a	& \xmark	 & n/a	&\\
        \bottomrule
    \end{tabular}
\caption{Counterexamples in $4$-manifold topology.}\label{table}
\end{sidewaystable}
\end{center}

\bibliography{bib}

\end{document}